\documentclass[a4paper,10pt]{amsart}
\usepackage[arrow,matrix]{xy}
\usepackage{amsmath,amssymb,amscd,bbm,amsthm,mathrsfs,dsfont,enumerate}
\theoremstyle{plain} \textwidth=36pc \textheight=51pc

\topmargin=0mm \oddsidemargin=0.5cm \evensidemargin=0.5cm
\parskip=6pt



\newtheorem{theorem}{Theorem}[section]
\newtheorem{lemma}[theorem]{Lemma}
\newtheorem{example}[theorem]{Example}
\newtheorem{proposition}[theorem]{Proposition}
\newtheorem{corollary}[theorem]{Corollary}
\theoremstyle{definition}
\newtheorem{definition}[theorem]{Definition}
\newtheorem{remark}[theorem]{Remark}
\numberwithin{equation}{section}

\DeclareMathOperator{\End}{End} \DeclareMathOperator{\Ext}{Ext}

 \DeclareMathOperator{\h}{H}

\DeclareMathOperator{\im}{Im}
\DeclareMathOperator{\e}{\epsilon}
\DeclareMathOperator{\GK }{GK dim}

\DeclareMathOperator{\gr }{gr}
\DeclareMathOperator{\ad }{ad}


\begin{document}
\title[Pointed Hopf algebras of finite GK-dimension]
 {Properties of pointed and connected Hopf algebras of finite Gelfand-Kirillov dimension} 
\maketitle

\begin{abstract}
Let $H$ be a pointed Hopf algebra. We show that under some mild assumptions $H$ and its associated graded Hopf algebra $\gr H$ have the same Gelfand-Kirillov dimension. As an application, we prove that the Gelfand-Kirillov dimension of a connected Hopf algebra is either infinity or a positive integer. We also classify connected Hopf algebras of GK-dimension three over an algebraically closed field of characteristic zero.
\end{abstract}


\section{Introductioin}

The Gelfand-Kirillov dimension (or GK-dimension for short) has been a useful tool for investigating infinite-dimensional Hopf algebras. For example, Hopf algebras of low GK-dimensions are studied in \cite{BZ,GZ,Li,Z,WZZ1,WZZ2}. 

It is well known that every Hopf algebra $H$ has a coradical filtration $\{H_n\}_{n=0}^{\infty}$. If $H$ is pointed with group-like elements $G$, then the associated graded algebra of $H$ with respect to the filtration $\{H_n\}_{n=0}^{\infty}$ is a graded Hopf algebra, which we denote by $\gr H$. The structure of $\gr H$ is relatively easier in the sense that it has a nice decomposition $\gr H\cong R\#kG$, where $R$ is a certain graded subalgebra of $\gr H$ (see \cite[Theorem 3]{R}). In the first part of this paper, we clarify the behavior of the GK-dimension of a pointed Hopf algebra under taking associated graded algebra. In fact, we prove the following Theorem.
\begin{theorem}[(Theorem $\ref{equality}$)]\label{First}
Retain the above notation. If $R$ is finitely generated, then
\[\GK R+ \GK kG=\GK \gr H=\GK H.\]
\end{theorem}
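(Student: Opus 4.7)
I plan to prove the theorem by establishing the two equalities separately: first $\GK \gr H = \GK R + \GK kG$, by exploiting the smash product decomposition $\gr H \cong R \# kG$; and then $\GK \gr H = \GK H$, via the standard interplay between a filtered algebra and its associated graded.

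For the first equality, the finite-generation hypothesis on $R$ is what supplies the key finiteness property. Since $R$ is connected graded ($R_0 = k$) and finitely generated, each homogeneous component $R_n$ is finite-dimensional, so $V := R_{\leq N}$ is a finite-dimensional subspace for every $N$, and it generates $R$ as an algebra once $N$ exceeds the maximal degree of a chosen generating set. Moreover, because the $G$-action on $R$ respects the grading, $V$ is automatically $G$-stable. Picking any finite-dimensional $W \subseteq kG$ containing $1$, I would use the smash product relation $(1 \# g)(v \# 1) = (g \cdot v) \# g$ together with $G$-stability of $V$ to show
\[
(V \# 1 + 1 \# W)^{n} \;\subseteq\; V^n \# W^n \quad \text{and} \quad (V \# 1 + 1 \# W)^{2n} \;\supseteq\; V^n \# W^n.
\]
The upper containment follows by repeatedly commuting $V$-factors past $W$-factors; the lower from $(V \# 1)^n (1 \# W)^n = V^n \# W^n$. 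Since $R \# kG \cong R \otimes kG$ as vector spaces we have $\dim(V^n \# W^n) = \dim V^n \cdot \dim W^n$, and passing to $\limsup_{n \to \infty} \log_n$ and taking suprema over $V$ and $W$ then yields $\GK \gr H = \GK R + \GK kG$.

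For the second equality, the coradical filtration $\{H_n\}$ is an algebra filtration on the pointed $H$ (i.e., $H_i H_j \subseteq H_{i+j}$). Any finite-dimensional $V \subseteq H$ admits a basis adapted to this filtration, giving an associated graded subspace $\bar V \subseteq \gr H$ with $\dim \bar V = \dim V$. A routine leading-term calculation shows that the associated graded of $V^n \subseteq H$ coincides with $\bar V^n \subseteq \gr H$, whence the pointwise identity
\[
\dim V^n = \dim \bar V^n \qquad (n \geq 1).
\]
Conversely, any finite-dimensional $W \subseteq \gr H$ lifts element-by-element (choose a homogeneous basis and lift each vector to the appropriate $H_k$) to a subspace $\tilde W \subseteq H$ with $\dim \tilde W = \dim W$ and $\bar{\tilde W} = W$. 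Taking suprema on both sides then yields $\GK H = \GK \gr H$.

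The main obstacle I foresee is in Step 1: constructing a $G$-stable finite-dimensional generating subspace of $R$ is essential for the upper bound, and the finite-generation hypothesis is precisely what enables this, via the finite-dimensionality of each $R_n$. In Step 2, the apparent worry is that $H_0 = kG$ may be infinite-dimensional, so that the classical Rees-algebra arguments (which assume locally finite filtrations) do not directly apply; however, since GK-dimension only inspects finite-dimensional subspaces, the pointwise identity $\dim V^n = \dim \bar V^n$ sidesteps this entirely.
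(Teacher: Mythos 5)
Your first step is essentially the paper's own route: the paper proves $\GK R+\GK kG=\GK\gr H$ via Lemma \ref{addition}, using exactly the two facts you isolate (local finiteness of the $G$-action coming from finite generation of the connected graded algebra $R$, and the smash-product commutation rule). The only point you gloss over is that $\varlimsup_n\log_n$ of a product of two dimension sequences does not split as a sum of $\varlimsup$'s in general, so the lower bound $\GK(R\#kG)\ge\GK R+\GK kG$ needs the fact that a finitely generated group of finite GK-dimension has genuinely polynomial growth with an honest limit; this is what the cited \cite[Lemma 5.5]{Z} supplies, and it is repairable in your setup.

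The second step, however, has a fatal gap. The ``routine leading-term calculation'' asserting $\dim_kV^n=\dim_k\bar V^n$ is false: if $a\in H_i\setminus H_{i-1}$ and $b\in H_j\setminus H_{j-1}$, the product $ab$ may drop into $H_{i+j-1}$, in which case $\bar a\bar b=0$ in $\gr H$ even though $ab\ne0$ in $H$. In general one only gets $\dim_k\bar V^n\le\dim_kV^n$, i.e.\ the easy inequality $\GK\gr H\le\GK H$ of \cite[Lemma 6.5]{KL}. Note that your argument for this step uses the hypothesis that $R$ is finitely generated nowhere, so it would prove $\GK H=\GK\gr H$ for every pointed Hopf algebra; the paper's own example following Theorem \ref{equality} refutes this: for $k=\mathbb{F}_p$ and $H=k[x]$ with $x$ primitive, $\gr H\cong k[x_1,x_2,\cdots]/(x_1^p,x_2^p,\cdots)$ has GK-dimension $0$ while $\GK H=1$. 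Concretely, with $V=k+kx$ one has $\dim_kV^n=n+1$ but $\bar x^{\,p}=0$ in $\gr H$, so $\dim_k\bar V^n\le p$ for all $n$. The reverse inequality $\GK H\le\GK\gr H$ is precisely the hard part of the theorem, and the paper's proof of it is not a leading-term argument at all: it bounds $\dim_kC^n$ for a finite-dimensional subcoalgebra $C\subset H_N$ by $\dim_kV_{nN}\cdot g_S(n)$, where the control on the number $g_S(n)$ of group-likes comes from Takeuchi's free Hopf algebra construction (group-likes of $C^n$ are products of at most $n$ group-likes of $C$), and where finite generation of $R$ is finally used, via \cite[Proposition 6.6]{KL}, to identify $\varlimsup_n\log_n\dim_kV_n$ with $\GK R$. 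Without an input of this kind your proof of the second equality does not go through.
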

The first equality follows from Lemma \ref{addition}, which is a generalized version of \cite[Lemma 5.5]{Z}. The proof of the second equality depends heavily on Takeuchi's construction of free Hopf algebras, which we will review briefly in Section \ref{Takeuchi}.

An interesting phenomenon is that the GK-dimension of every known Hopf algebra is either infinity or a non-negative integer. So it is tempting to  conjecture that this is always true for any Hopf algebra. As positive evidence for this conjecture, we prove in Theorem \ref{integer} that the GK-dimension of a connected Hopf algebra over an algebraically closed field of characteristic zero is either infinity or a positive integer. This is basically a consequence of Theorem \ref{equality} and the following result.

\begin{theorem}[(Proposition $\ref{polynomial}$)]
Let $K$ be a connected coradically graded Hopf algebra and assume that the base field $k$ is algebraically closed of characteristic $0$. If $K$ is finitely generated, then $K$ is isomorphic to the polynomial ring in $\ell$ variables for some $\ell\ge0$ as algebras.
\end{theorem}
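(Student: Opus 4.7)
Let $V := K(1)$ and $\ell := \dim V$. I begin by observing that $V$ equals the space $P(K)$ of primitive elements: any $v \in K(1)$ has $\Delta(v) \in K(0)\otimes K(1) \oplus K(1)\otimes K(0)$ by gradedness, and the counit axiom then forces $\Delta(v) = v\otimes 1 + 1\otimes v$; conversely, any primitive lies in the first step $K_1 = k\oplus V$ of the coradical filtration, hence in $V$. Finite generation of $K$ combined with the grading gives $\dim K(n) < \infty$ for all $n$, and in particular $\ell < \infty$.

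Next I would show that $V$ is abelian. For $v,w \in V$ a direct computation with $\Delta$ gives $\Delta([v,w]) = [v,w]\otimes 1 + 1\otimes [v,w]$, so $[v,w] \in P(K) = K(1)$; but $[v,w] \in K(2)$, and the direct-sum decomposition $K = \bigoplus K(n)$ gives $K(1)\cap K(2)=0$, forcing $[v,w]=0$. Consequently the subalgebra $S \subseteq K$ generated by $V$ is commutative and the canonical algebra map $\operatorname{Sym}(V) \to S$ is an isomorphism. Since $V$ is made up of primitives, $S$ is in fact a Hopf subalgebra of $K$; choosing a basis of $V$ identifies $S$ with $k[x_1,\dots,x_\ell]$.

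The main substance of the proof, and the step I expect to be the main obstacle, is the reverse inclusion $K = S$. I would proceed by induction on degree: assume $K(m) = S(m)$ for all $m < d$, and pick $y \in K(d)$. The coradically graded hypothesis gives $\bar\Delta(y) := \Delta(y) - y\otimes 1 - 1\otimes y \in \bigoplus_{i+j=d,\, i,j\ge 1} K(i)\otimes K(j) \subseteq S\otimes S$, and coassociativity translates into the cocycle identity $(\bar\Delta\otimes 1)\bar\Delta(y) = (1\otimes\bar\Delta)\bar\Delta(y)$ inside the coalgebra $S$. In characteristic zero a Cartier-type lifting argument shows that such $2$-cocycles on the symmetric coalgebra are coboundaries, producing $s\in S(d)$ with $\bar\Delta(s) = \bar\Delta(y)$; then $y-s$ is primitive, hence lies in $V$, which for $d \ge 2$ forces $y \in S$. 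The essential role of characteristic zero enters precisely here: in positive characteristic, divided-power Hopf algebras supply genuine extra primitives in degrees $p^i$ and the analogous lifting fails. Induction then yields $K = S \cong k[x_1,\dots,x_\ell]$, as required.
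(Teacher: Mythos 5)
There is a genuine gap, and in fact the central step of your plan is false. You claim that in characteristic zero every $2$-cocycle $\bar\Delta(y)\in S\otimes S$ on the symmetric coalgebra $S=\operatorname{Sym}(V)$ is a coboundary, so that $K$ is generated by its primitives $V=K(1)$ and $K\cong\operatorname{Sym}(V)$. This fails already for $\dim V=2$: the paper's Lemma \ref{cohomology} computes $\h^2(\Omega C)\cong\Ext^2_{k[x_1,x_2]}(k,k)\neq 0$ for $C=k[x,y]$, with the class of the cocycle $x\otimes y$ as an explicit nonzero element (the image of $\partial^1$ on degree-$2$ elements only contains the symmetric tensors, never $x\otimes y$ itself). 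Concretely, $\gr B(\lambda)$ from Example \ref{typeB} is a finitely generated connected coradically graded Hopf algebra with $\dim_k K(1)=2$ which needs a third generator $\bar z$ in degree $2$ satisfying $\bar\Delta(\bar z)=\bar x\otimes\bar y$; no correction by an element of the subalgebra generated by $\bar x,\bar y$ makes $\bar z$ primitive. So $K$ is generally \emph{not} generated in degree one, $\ell$ is generally strictly larger than $\dim_k K(1)$, and the isomorphism in the proposition is only one of algebras, not of Hopf algebras — your argument, if it worked, would prove the stronger (false) Hopf-algebra statement.

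The parts of your plan before the induction are fine ($K(1)=P(K)$, finite local dimensions, commutativity of the primitives), but the paper takes an entirely different route that avoids cohomology: it first proves (Lemma \ref{commutative}) that any coradically graded Hopf algebra with $K(0)=k$ is \emph{commutative}, by dualizing to a graded Hopf algebra generated in degree one (hence cocommutative) via \cite[Lemma 5.5]{AS2}. Then $K\cong\mathcal{O}(\Gamma)$ for an affine algebraic group $\Gamma$, which in characteristic zero is smooth by Cartier's theorem, so $K$ has finite global dimension; a connected graded commutative algebra of finite global dimension is a polynomial ring by \cite[III.2.5]{NO}. Characteristic zero enters through Cartier's smoothness theorem, not through vanishing of $\h^2$. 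If you want to salvage a cohomological approach, you would have to allow polynomial generators in all degrees and argue that the obstructions land in a space accounted for by new generators, which is essentially a different (and harder) proof.
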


For the definition of coradically graded Hopf algebras, one can refer to Definition \ref{gradedHopf}. Notice that if $H$ is a connected Hopf algebra, then $\gr H$ is a connected coradically graded Hopf algebra (see Remark \ref{coradically}).

In the last two sections, with the help of the results from previous sections, we classify connected Hopf algebras of GK-dimension three over an algebraically closed field of characteristic zero. The result can be stated as follows. Note that we use $P(H)$ to denote the space of primitive elements of $H$.

\begin{theorem}[(Theorem $\ref{classification}$)]
Let $H$ be a connected Hopf algebra of GK-dimension three (over an algebraically closed field of characteristic $0$). Then $H$ is isomorphic to one of the following:
\begin{enumerate}
\item[\textup{(I)}] The enveloping algebra $U(\mathfrak{g})$ for some three-dimensional Lie algebra $\mathfrak{g}$;
\item[\textup{(II)}] The Hopf algebras $A(0, 0, 0)$, $A(0, 0, 1)$, $A(1, 1, 1)$ or $A(1, \lambda, 0)$ from Example \ref{typeA} for some $\lambda\in k$;

\item[\textup{(III)}] The Hopf algebras $B(\lambda)$ from Example \ref{typeB} for some $\lambda\in k$.
\end{enumerate}
\end{theorem}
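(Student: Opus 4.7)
My plan is to pass to the associated graded Hopf algebra $\gr H$ (with respect to the coradical filtration) and then lift the structural information back to $H$ by a case-by-case construction. Since $H$ is connected, Remark \ref{coradically} shows that $\gr H$ is a coradically graded connected Hopf algebra, and Theorem \ref{equality}, applied with trivial group of group-likes (so that $R = \gr H$), yields $\GK \gr H = \GK H = 3$ once $\gr H$ is known to be finitely generated. After a preliminary step establishing finite generation in the connected setting, Proposition \ref{polynomial} identifies $\gr H \cong k[x_1, x_2, x_3]$ as algebras; the Hopf structure is then encoded by a degree sequence $(d_1, d_2, d_3)$ for a minimal system of homogeneous generators of this polynomial ring.

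The natural invariant on which to split cases is $\dim P(H)$, which for a connected Hopf algebra equals the number of degree-$1$ generators of $\gr H$ (since $P(H)$ is isomorphic to the degree-$1$ piece of $\gr H$ modulo scalars). The inclusion $U(P(H)) \hookrightarrow H$ forces $\dim P(H) \le \GK H = 3$, while connectedness together with $H \ne k$ gives $\dim P(H) \ge 1$, so only three cases remain.

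When $\dim P(H) = 3$, one has $\gr H = S(P(\gr H))$, and the inclusion $U(P(H)) \hookrightarrow H$ induces an equality on associated graded algebras; a comparison of the coradical filtrations then yields $H = U(\mathfrak{g})$ for $\mathfrak{g} = P(H)$ with its inherited Lie bracket, giving case (I). When $\dim P(H) \in \{1, 2\}$, I would set $V = P(H)$, enumerate the admissible degree sequences of the form $(1, 1, d_3)$ or $(1, d_2, d_3)$ with $d_i \ge 2$, and lift the higher-degree homogeneous generators of $\gr H$ to elements $y$ (and, if needed, $z$) of $H$. The tensor $\Delta(y) - y \otimes 1 - 1 \otimes y$ then lies in lower coradical layers and, together with the commutators $[y, v]$ for $v \in V$ and $[y, z]$, is constrained by coassociativity, the counit axiom, and the polynomial algebra structure of $\gr H$ to a finite system of polynomial equations. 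Solving and normalizing by rescaling and change of basis should produce precisely the families $A(0, 0, 0)$, $A(0, 0, 1)$, $A(1, 1, 1)$, $A(1, \lambda, 0)$ of Example \ref{typeA} and $B(\lambda)$ of Example \ref{typeB}, giving cases (II) and (III).

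The principal obstacle lies in this last step: one must enumerate all admissible degree sequences, determine which choices of $\Delta(y)$ and of commutators actually lift to a bona fide Hopf algebra on $H$ (rather than only on $\gr H$), and classify the outcome up to Hopf isomorphism. The polynomial structure of $\gr H$ keeps the algebra side manageable, but the Hopf compatibilities reduce each subcase to a concrete, finite system of polynomial identities that must be solved explicitly and then matched against the candidate families $A$ and $B$ to confirm that the enumerated list is exhaustive.
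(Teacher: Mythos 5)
Your skeleton matches the paper's: split on $\dim_k P(H)$, get $U(\mathfrak{g})$ when the primitives already account for the whole GK-dimension, and otherwise adjoin a third generator whose comultiplication and commutators are then pinned down. But there are two genuine gaps at exactly the points where the real work happens. First, you do not exclude the case $\dim_k P(H)=1$. The paper's Proposition \ref{enveloping} gets $\dim_k P(H)\in\{2,3\}$ by citing \cite[Lemma 5.11]{Z}; this is a nontrivial input (essentially the existence of Hopf subalgebras of GK-dimension two inside $H$), and your plan of ``enumerating degree sequences $(1,d_2,d_3)$ and seeing which lift'' gives no mechanism for showing that none of them do.

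Second, and more seriously, your claim that coassociativity reduces the possible values of $\Delta(y)-1\otimes y-y\otimes 1$ to ``a finite system of polynomial equations'' is precisely the step that needs a new idea. A priori this tensor is an arbitrary $2$-cocycle in $C^+\otimes C^+$ where $C=U(\mathfrak{h})$ is the Hopf subalgebra generated by the primitives, and the lift $y$ may sit in an arbitrarily high coradical layer $H_N$; the space of candidates is not finite-dimensional and does not obviously stabilize. The paper resolves this with Theorem \ref{3generators} and Lemma \ref{cohomology}: it computes $\h^2(\Omega C)$ via the cobar construction, identifying it with $\Ext^2_{k[x_1,x_2]}(k,k)$, which is one-dimensional and spanned by the class of $x\otimes y$. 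Only this cohomological normalization lets one replace $z'$ by $z$ with $\Delta(z)=1\otimes z+x\otimes y+z\otimes 1$, forces the third generator into degree $2$ of $\gr H$, and makes the remaining commutator computations (primitivity of $[z,x]$, $[z,y]$ up to explicit corrections, Jordan-form normalization, and the surjectivity-plus-GK argument of Lemma \ref{regular}) a finite, solvable problem. Without an argument that the relevant second cohomology of $U(\mathfrak{h})$ is one-dimensional, your enumeration never closes up.
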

\noindent{\bf Acknowledgment.} The author thanks Professor James Zhang, Professor Dingguo Wang, Xingting Wang and Cris Negron for useful conversations and for their careful reading of this paper. The research was partially supported by the US National Science Foundation.

\section{Preliminaries}\label{pre}
Throughout this paper, $k$ denotes a base field. All algebras, coalgebras and tensor products are taken over $k$ unless otherwise stated. Given a group $G$, we will use $kG$ to denote the group algebra of $G$ over $k$.

For a coalgebra $C$, we denote the comultiplication and the counit by $\Delta$ and $\e$, respectively. Let $G(C)$ be the set of the group-like elements in $C$, and $C^+$ be the kernel of the counit. The \textbf{coradical}  $C_0$ of $C$ is defined to be the sum of all simple subcoalgebras of $C$. The coalgebra $C$ is called \textbf{pointed} if $C_0=kG(C)$, and \textbf{connected} if $C_0$ is one-dimensional. Also, we use $\{C_n\}_{n=0}^{\infty}$ to denote the coradical filtration of $C$ \cite[5.2.1]{Mont}. The coalgebra $C$ is called \textbf{coradically finite} if $\dim_k C_n<\infty$ for any $n$. For a Hopf algebra $H$, we use $P(H)$ to denote the space of primitive elements of $H$.

Let $C=\bigoplus\limits_{i=0}^{\infty }C(i)$ be a graded coalgebra. We say that $C$ is \textbf{coradically graded} if $C_0=C(0)$ and $C_1=C(0)\bigoplus C(1)$. If $C$ is coradically graded, then as shown by  \cite[Lemma]{CM}, $C_n=\bigoplus_{i\le n}C(i)$ for any $n\ge 0$.  Now we recall the definition of graded Hopf algebras, which will be used intensively in this paper.

\begin{definition}\label{gradedHopf}
Let $H$ be a Hopf algebra with antipode $S$. If 
\begin{enumerate}
\item[(1)] $H=\bigoplus\limits_{i=0}^{\infty }H(i)$ is a graded algebra,
\item[(2)] $H=\bigoplus\limits_{i=0}^{\infty }H(i)$ is a graded coalgebra,
\item [(3)]$S(H(n))\subset H(n)$ for any $n\ge 0$,
\end{enumerate}
then $H=\bigoplus\limits_{i=0}^{\infty }H(i)$ is called a \textbf{graded Hopf algebra}. If in addition, 
\begin{enumerate}
\item[(4)]$H=\bigoplus\limits_{i=0}^{\infty }H(i)$ is a coradically graded coalgebra,
\end{enumerate}
then $H$ is called a \textbf{coradically graded Hopf algebra}.
\end{definition}

\begin{remark}\label{coradically}
It turns out that the notion of coradically graded Hopf algebra is very natural. For example, let $H$ be a pointed Hopf algebra with coradical filtration $\{H_n\}_{n\ge 0}$. Then the associated graded space $\gr H=\bigoplus_{n\ge 0}H_n/H_{n-1}$ is a graded Hopf algebra \cite[p. 62]{Mont}. Moreover, as mentioned in \cite[Definition 1.13]{AS}, $\gr H$ is a coradically graded coalgebra. Therefore, $\gr H$ is a coradically graded Hopf algebra.
\end{remark}

\section{Takeuchi's construction of free Hopf alegbras}\label{Takeuchi}
In \cite{T}, Takeuchi proved that for any coalgebra $C$ there exists a Hopf algebra $\mathcal{H}(C)$ characterized by the following universal property:
\begin{itemize}
\item[(1)] There is a coalgebra map $i: C\rightarrow \mathcal{H}(C)$,
\item[(2)] For any Hopf algebra $H$ and coalgebra map $f: C\rightarrow H$, there is a Hopf algebra map $f': \mathcal{H}(C)\rightarrow H$ such that $f=f'i$.
\end{itemize}
The Hopf algebra $\mathcal{H}(C)$ is called the free Hopf algebra generated by $C$ \cite[Definition 1]{T}. Takeuchi showed the existence of $\mathcal{H}(C)$ by an explicit construction, which we will describe briefly.

Let $V=\bigoplus_{i=0}^{\infty}V_i$ where $V_i=C$ if $i$ is even and $V_i=C^{cop}$ if $i$ is odd. Notice that $V$ has a natural coalgebra structure. Let $S: V\rightarrow V^{cop}$ be the coalgebra map sending $(x_0, x_1, x_2, \cdots)$ to $(0, x_0, x_1, x_2, \cdots)$. Then $S$ induces a bialgebra map $S: T(V)\rightarrow T(V)^{op, cop}$. Let $I$ be the two-sided ideal of $T(V)$ generated by the set
$$\{S*Id(x)-\e(x)1\, |\, x\in V\}\bigcup \{Id*S(x)-\e(x)1 \,|\, x\in V\},$$
where $*$ represents the convolution. Moreover, $I$ is a coideal and $S(I)\subset I$. By \cite[Lemma 1]{T}, the Hopf algebra $T(V)/I$, with the antipode induced from the map $S$, is the universal object $\mathcal{H}(C)$.

Takeuchi's construction generalizes the notion of a free group generated by a set in the following sense.
\begin{proposition}[{\cite[Lemma 34]{T}}]
$\mathcal{H}(kG(C))=k\langle G(C)\rangle$, where $\langle G(C)\rangle$ is the free group generated by the set $G(C)$.
\end{proposition}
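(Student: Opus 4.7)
The plan is to verify that both $\mathcal{H}(kG(C))$ and $k\langle G(C)\rangle$ represent the same functor from the category of Hopf algebras to the category of sets, and then conclude by Yoneda-type reasoning that they must be isomorphic as Hopf algebras. Concretely, I will show that for every Hopf algebra $H$ there is a natural bijection
\[
\Hom_{\text{Hopf}}\bigl(\mathcal{H}(kG(C)),\,H\bigr)\;\longleftrightarrow\;\Hom_{\text{Set}}\bigl(G(C),\,G(H)\bigr)\;\longleftrightarrow\;\Hom_{\text{Hopf}}\bigl(k\langle G(C)\rangle,\,H\bigr).
\]

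First I would unpack the left-hand bijection. By the universal property stated for Takeuchi's construction, Hopf algebra maps $\mathcal{H}(kG(C))\to H$ correspond bijectively to coalgebra maps $kG(C)\to H$. Since coalgebra maps send group-like elements to group-like elements, and since $kG(C)$ is spanned as a coalgebra by the group-likes $G(C)$ (with $\Delta(g)=g\otimes g$ and $\e(g)=1$), such a coalgebra map is determined by, and may be specified arbitrarily from, any set map $G(C)\to G(H)$. This gives the first bijection.

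Next I would establish the right-hand bijection. The group algebra $k\langle G(C)\rangle$ of the free group $\langle G(C)\rangle$ has the universal property that Hopf algebra maps $k\langle G(C)\rangle\to H$ correspond to group homomorphisms $\langle G(C)\rangle\to G(H)$; this uses the standard fact that $G(H)$ is a group under multiplication, with inverse supplied by the antipode, and that Hopf algebra maps between group algebras of groups (or more generally, Hopf maps out of a group algebra) are exactly the group-like-preserving coalgebra maps. Applying the universal property of the free group $\langle G(C)\rangle$ on the set $G(C)$, group homomorphisms $\langle G(C)\rangle\to G(H)$ are in bijection with set maps $G(C)\to G(H)$, giving the second bijection.

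Combining the two bijections, both $\mathcal{H}(kG(C))$ and $k\langle G(C)\rangle$ corepresent the same functor $H\mapsto \Hom_{\text{Set}}(G(C),G(H))$, naturally in $H$. To turn this into an isomorphism, I would apply each universal property to the other object: the identity map $G(C)\to G(C)=G(k\langle G(C)\rangle)$ produces a Hopf algebra map $\Phi:\mathcal{H}(kG(C))\to k\langle G(C)\rangle$, and the canonical coalgebra map $kG(C)\to \mathcal{H}(kG(C))$ (landing in group-likes) produces, via the free group property, a Hopf algebra map $\Psi:k\langle G(C)\rangle\to \mathcal{H}(kG(C))$. The compositions $\Phi\Psi$ and $\Psi\Phi$ satisfy the defining universal properties of the identity maps and hence equal the identities.

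The main subtlety — really the only nontrivial point — is the claim that Hopf maps out of $k\langle G(C)\rangle$ really do match up with set maps $G(C)\to G(H)$; this rests on knowing that in the free Hopf algebra construction (and more generally in any Hopf algebra) the antipode restricted to group-like elements gives inverses in $G(H)$, so that the images of the free generators automatically extend multiplicatively to a well-defined group homomorphism. Everything else is a straightforward transport along the two universal properties.
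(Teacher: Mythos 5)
Your argument is correct, but it takes a different route from the source: the paper offers no proof at all here, simply citing Takeuchi's Lemma~34, and Takeuchi's own proof is constructive, identifying $\mathcal{H}(kG(C))$ inside the explicit model $T(V)/I$ and exhibiting the reduced words in $G(C)\cup G(C)^{-1}$ as a $k$-basis (this is exactly the basis statement recorded as Lemma~\ref{basis} and reused in Proposition~\ref{coradicaluniversal}). Your approach instead shows that $\mathcal{H}(kG(C))$ and $k\langle G(C)\rangle$ corepresent the same functor $H\mapsto \Hom_{\text{Set}}(G(C),G(H))$, via the chain of adjunctions (free Hopf algebra on a coalgebra) $\dashv$ (underlying coalgebra), (group algebra) $\dashv$ (group-likes), and (free group) $\dashv$ (underlying set). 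Each link is sound: coalgebra maps out of the group-like coalgebra $kG(C)$ are exactly set maps $G(C)\to G(H)$ (using that distinct group-likes are linearly independent and that coalgebra maps preserve group-likes), and Hopf maps out of a group algebra are exactly group homomorphisms into $G(H)$. The trade-off is that the abstract argument yields only the isomorphism, whereas the constructive proof also delivers the explicit basis that the rest of Section~\ref{Takeuchi} depends on.

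Two small points to tighten. First, the map you call ``the identity $G(C)\to G(C)=G(k\langle G(C)\rangle)$'' is really the canonical inclusion of the generating set into the free group, since $G(k\langle G(C)\rangle)$ is all of $\langle G(C)\rangle$; this does not affect the argument. Second, your verification that $\Psi\Phi$ is the identity requires \emph{uniqueness} of the lift $f'$ in the universal property, which the paper's formulation does not state explicitly. Uniqueness does hold because $\mathcal{H}(C)$ is generated as an algebra by $\bigcup_{n\ge 0}S^n(i(C))$ and a Hopf algebra map commutes with the antipode, but this should be said, as it is the one place where your proof silently leans on the internals of Takeuchi's construction rather than on the quoted universal property alone.
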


Let $C$ be a coalgebra with coradical $C_0$ and let $C=C_0\oplus V$ be a decomposition of $C$ as a $k$-space. Then $\mathcal{H}(C)$ can be realized by giving a natural Hopf structure on the algebra $\mathcal{H} (C_0)\amalg T(V)$ \cite[\S 6]{T}, where $\amalg $ denotes the coproduct in the category of algebras. Now the canonical coalgebra map $i: C\rightarrow \mathcal{H}(C)$ can be identified with the map induced by maps $C_0\rightarrow \mathcal{H}(C_0)$ and $V\rightarrow T(V)$. By this characterization, we have
\begin{lemma}[{\cite[Theorem 35]{T}}]\label{basis}
Suppose that $C$ is a pointed coalgebra and $G(C)\cup B$ is a $k$-basis for $C$. Let $X=G(C)\cup G(C)^{-1}\cup B$ and let $Y$ be the set of finite sequences $(x_1, \cdots, x_n)$ of elements of $X$ such that $(x_i, x_{i+1})$ is not of the form $(g^{\pm1}, g^{\mp1})$ where $g\in G(C)$. Set
$$\overline{x}=x_1\cdots x_n\in \mathcal{H}(C) \,\,\text{for}\,\, x=(x_1, \cdots, x_n)\in Y,$$
where by abuse of notation we still use $x_j$ for its image in $\mathcal{H}(C)$ under the canonical map $i: C\rightarrow \mathcal{H}(C)$.
Then $\{\overline{x} \,|\, x\in Y\}$ forms a $k$-basis for $\mathcal{H}(C)$.
\end{lemma}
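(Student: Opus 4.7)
The plan is to leverage Takeuchi's refined construction $\mathcal{H}(C) \cong \mathcal{H}(C_0) \amalg T(V)$, where $\amalg$ is the coproduct in the category of associative $k$-algebras and $V = kB$ is the $k$-complement of $C_0 = kG(C)$ in $C$. Combined with $\mathcal{H}(C_0) = k\langle G(C)\rangle$, this presents $\mathcal{H}(C)$ as the $k$-algebra generated by the set $X = G(C) \cup G(C)^{-1} \cup B$ subject only to the relations $g g^{-1} = 1 = g^{-1} g$ for each $g \in G(C)$: the group algebra of the free group contributes precisely these relations on $G(C) \cup G(C)^{-1}$, the tensor algebra $T(V)$ is free on $B$, and the free product adds no mixed relations. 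So the problem reduces to finding a normal form for this presented algebra.

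Given this presentation, spanning is straightforward. Any word in $X^{*}$ can be reduced to one whose underlying sequence lies in $Y$ by iteratively deleting every adjacent pair $gg^{-1}$ or $g^{-1}g$; since each such step strictly decreases length, the process terminates. Hence $\{\overline{x} \mid x \in Y\}$ spans $\mathcal{H}(C)$ as a $k$-space.

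The main step, and the one I expect to require real work, is linear independence. My preferred route is Bergman's Diamond Lemma applied to the rewriting rules $gg^{-1} \to 1$ and $g^{-1}g \to 1$ under the length-lexicographic order. Termination is immediate, and the only overlap ambiguity to resolve is the triple $gg^{-1}g$, which reduces to $g$ regardless of which pair is cancelled first; since this is the unique ambiguity and it is confluent, the Diamond Lemma declares the irreducible words --- exactly the $\overline{x}$ for $x \in Y$ --- to be a $k$-basis of $\mathcal{H}(C)$.

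As a more hands-on alternative avoiding the Diamond Lemma, one could build a representation of the free algebra $k\langle X\rangle$ on $kY$ in which each $x \in X$ acts on a $Y$-indexed basis vector $\overline{(x_1, \dots, x_n)}$ by prepending $x$, and if this produces a forbidden adjacency $g g^{-1}$ or $g^{-1} g$ at the new front, by applying the single allowed cancellation. One then verifies that the defining relations $gg^{-1} - 1$ and $g^{-1}g - 1$ act as zero, which descends the action to $\mathcal{H}(C)$, and observes that $\overline{x}$ acts on the empty-word vector as $\overline{x}$ itself; this immediately forces linear independence of $\{\overline{x} \mid x \in Y\}$ in $\mathcal{H}(C)$.
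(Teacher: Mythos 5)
Your proposal is correct, but note that the paper itself offers no proof of this lemma: it is quoted verbatim from Takeuchi (\cite[Theorem 35]{T}), and the surrounding text only records the ingredients you use, namely $\mathcal{H}(C_0)\cong k\langle G(C)\rangle$ and the algebra isomorphism $\mathcal{H}(C)\cong \mathcal{H}(C_0)\amalg T(V)$ from \cite[\S 6]{T}, under which $i$ restricts to the canonical maps on $C_0$ and $V$. Your reconstruction follows essentially the same route as the cited source: the coproduct decomposition presents $\mathcal{H}(C)$ on the generating set $X$ with only the relations $gg^{-1}=g^{-1}g=1$, and then one extracts the normal form, whether via the standard basis of a free product of augmented algebras (alternating nonempty reduced group words and nonempty words in $B$, which flatten to exactly the sequences in $Y$), via the Diamond Lemma, or via your van der Waerden--style action on $kY$. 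Two small points: the rewriting system has \emph{two} families of overlap ambiguities, $gg^{-1}g$ and $g^{-1}gg^{-1}$, not one, though the second resolves just as trivially; and for $\{\overline{x}\}$ to contain $1$ you must read $Y$ as including the empty sequence. Neither affects the validity of the argument.
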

Now we are able to prove the following proposition, which determines the coradical of $\mathcal{H}(C)$ when $C$ is pointed. For a given $k$-subspace $W$ of an algebra $R$ and any $n\ge 1$, let $W^n$ denote the $k$-subspace of $R$ spanned by products of $\le n$ elements in $W$. 
 
\begin{proposition}\label{coradicaluniversal}
Let $C$ be a pointed coalgebra. For any $n\ge 1$. Then the following statements are true.
\begin{enumerate} 
\item[\textup{(I)}] the Hopf algebra $\mathcal{H}(C)$ is pointed and the coradical of $\mathcal{H}(C)$ is equal to the subalgebra generated by $G(C)$ and their inverses, which is isomorphic to $k\langle G(C)\rangle$.
\item[\textup{(II)}] for any $n\ge 1$, $C^n$ is a subcoalgebra of $\mathcal{H}(C)$ and any element in $G(C^n)$ can be expressed as a product of $\le n$ elements in $G(C)$.
\end{enumerate}
\end{proposition}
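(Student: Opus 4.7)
The plan is to prove (I) by constructing an explicit coalgebra filtration $\{F_n\}_{n\ge 0}$ of $\mathcal{H}(C)$ whose bottom piece is precisely the subalgebra $k\langle G(C)\rangle$; once this is in place, the standard fact that the coradical of a coalgebra is contained in the zeroth piece of any coalgebra filtration yields the desired equality. To set this up, I would refine the basis $G(C)\cup B$ of $C$ from Lemma \ref{basis} along the coradical filtration, writing $B=\bigsqcup_{i\ge 1}B_i$ so that $G(C)\cup B_1\cup\cdots\cup B_i$ is a basis of $C_i$. Assign weights $w(g)=w(g^{-1})=0$ for $g\in G(C)$ and $w(b)=i$ for $b\in B_i$, and extend additively to reduced words $x=(x_1,\dots,x_n)\in Y$ by $w(x)=\sum_j w(x_j)$. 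Let $F_n$ be the span of those basis elements $\overline{x}$ from Lemma \ref{basis} with $w(x)\le n$. Lemma \ref{basis} immediately gives $F_0=k\langle G(C)\rangle$.

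The heart of the argument is verifying that $\{F_n\}$ is indeed a coalgebra filtration, i.e.\ $\Delta(F_n)\subseteq\sum_{p+q=n}F_p\otimes F_q$. Since $\mathcal{H}(C)\cong\mathcal{H}(C_0)\amalg T(V)$ as algebras, multiplication on basis elements is given by concatenation followed by free-group reduction in $G(C)\cup G(C)^{-1}$; because every cancellation removes a weight-zero pair $gg^{-1}$, one obtains the multiplicative property $F_pF_{p'}\subseteq F_{p+p'}$, hence the same property componentwise in $\mathcal{H}(C)\otimes\mathcal{H}(C)$. Using that $\Delta$ is an algebra map, it then suffices to control $\Delta$ on the individual letters $x_j$. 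For $x_j\in G(C)\cup G(C)^{-1}$ one has $\Delta(x_j)\in F_0\otimes F_0$, while for $b\in B_i$ the coradical-filtration property $\Delta_C(b)\in\sum_{p+q=i}C_p\otimes C_q$, combined with the inclusion $C_p\subseteq F_p$ forced by the choice of basis, gives $\Delta(b)\in\sum_{p+q=w(b)}F_p\otimes F_q$. Multiplying through yields the filtration property. The bookkeeping in this step, tracking weights through a product in $\mathcal{H}(C)\otimes\mathcal{H}(C)$, is the main obstacle. Since $F_0=k\langle G(C)\rangle$ is cosemisimple, being the group algebra of the free group $\langle G(C)\rangle$, one concludes that the coradical of $\mathcal{H}(C)$ equals $F_0$; in particular $\mathcal{H}(C)$ is pointed.

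For (II), the coalgebra claim is easy: since $i:C\hookrightarrow\mathcal{H}(C)$ is a coalgebra map one has $\Delta(C)\subseteq C\otimes C$, and because $\Delta$ is an algebra map, $\Delta(c_1\cdots c_k)\in(C\otimes C)^k\subseteq C^k\otimes C^k\subseteq C^n\otimes C^n$ for $k\le n$, showing $C^n$ is a subcoalgebra. For the statement about $G(C^n)$, I would again appeal to the basis of Lemma \ref{basis}. Expanding any product of at most $n$ elements of $C=\mathrm{span}(G(C)\cup B)$ shows that $C^n$ is spanned by those basis elements $\overline{(x_1,\dots,x_k)}$ with $k\le n$ and each $x_j\in G(C)\cup B$ (no free-group reductions occur because no inverses appear in such products). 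On the other hand, by (I) every group-like element of $\mathcal{H}(C)$ is a single basis element $\overline{(h_1,\dots,h_m)}$ whose entries form a reduced word in $G(C)\cup G(C)^{-1}$. If such an element lies in $C^n$, comparing the two descriptions forces $m\le n$ with every $h_j\in G(C)$, yielding the required expression as a product of at most $n$ elements of $G(C)$.
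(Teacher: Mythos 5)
Your proof is correct and follows essentially the same route as the paper: both refine the basis of Lemma \ref{basis} along the coradical filtration of $C$, turn the resulting weight (degree) filtration into a coalgebra filtration whose zeroth piece is $k\langle G(C)\rangle$, and invoke the standard fact that the coradical is contained in the bottom term of any coalgebra filtration. The only cosmetic difference is in (II), where you compare basis expansions directly, while the paper phrases the same point via the induced grading (namely $G(C^n)\subseteq C^n\cap H(0)=C(0)^n$) together with the linear independence of distinct group-like elements.
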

\begin{proof} Denote $\mathcal{H}(C)$ by $H$. It is clear that the subalgebra of $H$ generated by $G(C)$ and their inverses is contained in $H_0$. By Lemma \ref{basis}, this subalgebra is isomorphic to $k\langle G(C)\rangle$. 

Choose a subset $B$ of $C$ such that $B=\bigcup_{i=1}^{\infty}B_i$ and $G(C)\bigcup B_1\bigcup\cdots\bigcup B_n$ is a $k$-basis of $C_n$ for any $n$. Let $V$ be the $k$-space spanned by $B$. By  \cite[\S 6]{T}, $H\cong k\langle G(C)\rangle\amalg T(V)$ as an algebra. Hence we can define a grading on $H$ by setting $\deg g=0$ for any $g\in G(C)$ and $\deg b_i=i$ for any $b_i\in B_i$. Under this grading, $H$ becomes a graded algebra (but not necessarily a graded coalgebra) and $C$ is a graded subspace of $H$. Let $H(n)$ be the $k$-subspace of $H$ spanned by homogeneous elements of degree $n$.

Now, fix a basis of $H$ described in Lemma \ref{basis} with the chosen set $B$. Then this is a basis consisting of homogeneous elements with respect to the grading defined above. For any $m\ge 0$, let $A_m=\sum_{i=0}^{m}H(i)$.
It is clear that $\{A_m\}_{m\ge 0}$ is an algebra filtration on $H$ and $A_0$ is exactly the subalgebra of $H$ generated by $G(C)$ and their inverses. Moreover, by the choice of $B$, we have $C_m\subset A_m$ for any $m\ge 0$. Let $\overline{x}=x_1\cdots x_n$ be as in Lemma \ref{basis} and suppose that $x_j\in B_{i_j}$ (here we set $B_0=G(C)\bigcup G(C)^{-1}$). Then by definition $A_m$ is spanned by $\overline{x}$ such that $\sum_j i_j\le m$. Pick such an $\overline{x}\in A_m$.
Notice that
$\Delta(\overline{x})=\Delta(x_1)\cdots\Delta(x_n)$ and for each $j$, $\Delta(x_j)\subset \sum_{t} C_t\otimes C_{i_j-t} \subset \sum_{t} A_t\otimes A_{i_j-t}$. As a consequence, 
\[\Delta(\overline{x})\in\sum_{t} A_t\otimes A_{m-t}.\]
This shows that $\{A_m\}_{m\ge 0}$ is also a coalgebra filtration. By \cite[Lemma 5.3.4]{Mont}, $H_0\subset A_0$. This proves the first statement.

For the second statement, it is easy to check that $C^n$ is a subcoalgebra of $H$. As mentioned before, $C$ is a graded $k$-subspace of $H$. This means that $C=\bigoplus_{i=0}^{\infty}C(i)$ where $C(i)=C\bigcap H(i)$. Now $G(C^n)=C^n\bigcap G(H)\subset C^n\bigcap H(0)$. Since $C$ is a graded $k$-subspace of $H$, $C^n\bigcap H(0)=C(0)^n$. Notice that $C(0)$ is spanned by $G(C)$. Hence every element in $G(C^n)$ can be expressed as a linear combination with each summand a scalar multiple of a product of $\le n$ elements in $G(C)$. Since distinct group-like elements are $k$-linearly independent by \cite[3.2.1]{Sw}, every element in $G(C^n)$ is actually a product of $\le n$ elements in $G(C)$.
\end{proof}

\begin{remark}\label{grading} 
Suppose that $C=\bigoplus_{i=0}^{\infty}C(i)$ is a pointed graded coalgebra such that $C(0)=C_0$. Now $C$ has a canonical decomposition $C=C_0\oplus V$, where $V=\bigoplus_{i=1}^{\infty}C(i)$. Then by the construction of the comultiplication and the antipode on $\mathcal{H}(C_0)\amalg  T(V)$ in \cite[Lemma 26, Lemma 27]{T}, $\mathcal{H}(C)=\mathcal{H}(C_0)\amalg  T(V)$ becomes a graded Hopf algebra, where elements in $\mathcal{H}(C_0)$ have degree $0$ and the grading on $T(V)$ is inherited from $V$. It is clear that the canonical inclusion $i: C\rightarrow \mathcal{H}(C)$ becomes a graded coalgebra map. Moreover, if there is a graded coalgebra map $f$ from $C$ to a graded Hopf algebra $H$, then the lifting Hopf algebra map $f': \mathcal{H}(C)\rightarrow H$ is also graded.
\end{remark}

\begin{corollary}\label{approx}
Let $H$ be a pointed Hopf algebra. Then every finite-dimensional subspace $V$ of $H$ is contained in a finitely generated Hopf subalgebra. 
Moreover, if $D$ is a finite-dimensional subcoalgebra of $H$ with group-like elements $G(D)$, then elements in $G(D^n)$ can be expressed as products of $\le n$ elements in $G(D)$.
\end{corollary}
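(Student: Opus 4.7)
The plan is to derive both claims from Takeuchi's universal property. For part one, I would invoke the fundamental theorem of coalgebras to enclose $V$ in a finite-dimensional subcoalgebra $C$ of $H$, which is pointed because $H$ is. The universal property applied to the inclusion $C \hookrightarrow H$ yields a Hopf algebra map $\pi: \mathcal{H}(C) \to H$ extending this inclusion, and its image is a Hopf subalgebra of $H$ containing $V$. To see the image is finitely generated, note that $\mathcal{H}(C) \cong \mathcal{H}(C_0) \amalg T(W)$ for any complement $W$ of $C_0$ in $C$; since $G(C)$ is finite and $\dim W$ is finite, a finite generating set for $\mathcal{H}(C)$ (and hence for its image) consists of $G(C) \cup G(C)^{-1}$ together with a basis of $W$.

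For part two, apply the universal property to $D \hookrightarrow H$, producing a Hopf algebra map $\pi: \mathcal{H}(D) \to H$ extending this inclusion. By Proposition \ref{coradicaluniversal}(II), in $\mathcal{H}(D)$ the subspace $D^n$ is a subcoalgebra whose group-like elements are precisely products of at most $n$ elements of $G(D)$. The restriction $\pi|_{D^n}$ is a surjective coalgebra map between the finite-dimensional pointed coalgebras $D^n$ (in $\mathcal{H}(D)$) and $D^n$ (in $H$), since subcoalgebras of pointed coalgebras are pointed. If I can show this surjection induces a surjection on sets of group-like elements, then any $g \in G(D^n) \subset H$ lifts to some group-like $\tilde g$ in $\mathcal{H}(D)$; Proposition \ref{coradicaluniversal}(II) expresses $\tilde g$ as a product $g_{i_1} \cdots g_{i_m}$ with $m \le n$ and $g_{i_j} \in G(D)$, and applying $\pi$ (which restricts to the identity on $D$) gives the same product expression for $g$ in $H$.

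The main obstacle is precisely this lifting of group-likes along a surjective coalgebra map of pointed coalgebras. I would resolve it by duality: the transpose of $\pi|_{D^n}$ is an injective algebra map between finite-dimensional algebras whose simple modules are all one-dimensional (since both coalgebras are pointed). Group-likes of a finite-dimensional pointed coalgebra biject with the primitive idempotents of the semisimple quotient of its dual by the Jacobson radical, so the desired surjectivity on group-likes amounts to injectivity of the induced map on these semisimple quotients. This injectivity follows from the standard fact that the preimage of the Jacobson radical under an injective algebra map of finite-dimensional algebras is a nil ideal, hence contained in the Jacobson radical of the source; an injective algebra map between semisimple commutative $k$-algebras of the form $k^r \hookrightarrow k^s$ then translates directly into a surjection on the underlying index sets, which are the group-likes.
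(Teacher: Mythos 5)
Your proposal is correct and follows essentially the same route as the paper: enclose $V$ in a finite-dimensional subcoalgebra, lift along the universal Hopf map $\mathcal{H}(C)\rightarrow H$ (whose source is finitely generated), and transport Proposition \ref{coradicaluniversal}(II) through the surjective coalgebra map $C^n\rightarrow D^n$. The only divergence is at the final step, where the paper simply cites \cite[Corollary 5.3.5]{Mont} for the fact that a surjection of pointed coalgebras is surjective on group-like elements, while you supply a correct self-contained proof of that fact by dualizing to an injective map of finite-dimensional algebras and comparing semisimple quotients.
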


\begin{proof}
Since $V$ is contained in a finite-dimensional subcoalgebra of $H$, we can assume $V=D$ is a finite-dimensional subcoalgebra. Let $C$ be a copy of $D$ as coalgebras. Then there is an injective coalgebra map $f: C\rightarrow H$ whose image is $D$. By the universal property of $\mathcal{H}(C)$, there is a Hopf algebra map $f':\mathcal{H}(C)\rightarrow H$ such that $f=f'i$, where $i$ is the inclusion $C\rightarrow \mathcal{H}(C)$. (Notice here if we do not introduce a copy $C$ of $D$, then by writing $D$ we could mean either a subcoalgebra of $H$ or a subcoalgebra of $\mathcal{H}(D)$, which may cause confusion in the proof). By Lemma \ref{basis}, $\mathcal{H}(C)$ is a finitely generated algebra. By construction, $D$ is contained in $f'(\mathcal{H}(C))$. This proves the first claim. 

Let $S=\{g_1, \cdots, g_\ell\}$ be the set of group-like elements of $C$. Then $\{f(g_1), \cdots, f(g_\ell)\}$ is the set of group-like elements of $D$. Notice that $C^n$ is a subcoalgebra of $\mathcal{H}(C)$. By \textup{(II)} of Proposition \ref{coradicaluniversal}, every group-like element of $C^n$ can be expressed as a product of $\le n$ elements in $S$. Since $C^n$ is mapped onto $D^n$ by $f'$, we have $G(D^n)=f'(G(C^n))$ by \cite[Corollary 5.3.5]{Mont}. The result then follows. 
\end{proof}

\begin{corollary}\label{0finite}
Let $H$ be a finitely generated pointed Hopf algebra. Then $H_0$ is a finitely generated algebra. In fact, if $D$ is a finite-dimensional subcoalgebra of $H$ that generates $H$ as an algebra, then $H_0$ is generated by $G(D)$.
\end{corollary}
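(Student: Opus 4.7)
The plan is to reduce the whole claim to the second statement and then apply Corollary \ref{approx} directly. First I would observe that the assertion that $H_0$ is finitely generated is an immediate consequence of the second statement: since $G(D)$ is a set of group-like elements in the finite-dimensional coalgebra $D$, linear independence of group-likes (\cite[3.2.1]{Sw}) forces $G(D)$ to be finite, whence $H_0$ is generated by a finite set.

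To prove the second statement, I would start by producing the subcoalgebra $D$. Take any finite set of algebra generators $v_1, \dots, v_r$ of $H$, and use the fact that every finite-dimensional subspace of a coalgebra is contained in a finite-dimensional subcoalgebra to embed $\mathrm{span}_k\{v_1, \dots, v_r\}$ into a finite-dimensional subcoalgebra $D$ of $H$. Then $D$ still generates $H$ as an algebra, so $H = \bigcup_{n \ge 1} D^n$ (where $D^n$ is as defined just before Proposition \ref{coradicaluniversal}).

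Next I would pick an arbitrary group-like element $g$ of $H$ and locate it in this filtration: since $H$ is pointed we have $H_0 = k G(H)$, and any $g \in G(H)$ lies in $D^n$ for some $n \ge 1$. Because $D^n$ is a subcoalgebra of $H$ (it is closed under $\Delta$, being the span of products of elements of $D$), $g$ is a group-like element of $D^n$, i.e., $g \in G(D^n)$. Corollary \ref{approx} then tells us that $g$ is a product of at most $n$ elements of $G(D)$. Since this holds for every group-like of $H$, the subalgebra of $H$ generated by $G(D)$ contains $G(H)$, hence all of $H_0 = k G(H)$, and the reverse inclusion is obvious because $G(D) \subseteq H_0$.

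The only nontrivial step is the application of Corollary \ref{approx}, and that work has already been done; the remaining checks (that $D^n$ is a subcoalgebra, that a group-like in $H$ lying in a subcoalgebra is group-like there, and that $H = \bigcup_n D^n$) are formal. So I anticipate no real obstacle beyond setting up the filtration $\{D^n\}$ carefully and invoking the previous corollary.
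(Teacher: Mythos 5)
Your proposal is correct and follows essentially the same route as the paper: embed a finite generating set in a finite-dimensional subcoalgebra $D$, note that any $g\in G(H)$ lies in some $D^n$, hence in $G(D^n)$, and invoke the second statement of Corollary \ref{approx}. The only addition is your explicit remark that $G(D)$ is finite, which the paper leaves implicit.
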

\begin{proof}
Since every finite-dimensional subspace of $H$ is contained in a finite-dimensional subcoalgebra by \cite[5.1.1]{Mont}, we can assume that $H$ is generated as an algebra by a finite-dimensional subcoalgebra $D$. By assumption $H_0$ is spanned by $G(H)$. For any $g\in G(H)$, there exists some $n$ such that $g\in D^n$. Notice that $D^n$ is a subcoalgebra of $H$. Hence $g\in G(D^n)$. Now the result follows from the second statement of Corollary \ref{approx}.
\end{proof}

\begin{remark}\label{gradedaffine}
Let $H=\bigoplus_{i=0}^{\infty}H(i)$ be a graded pointed Hopf algebra such that $H(0)$ is spanned by all group-like elements of $H$. In this case, every finite-dimensional subspace $V$ of $H$ is contained in a finitely generated graded Hopf subalgebra of $H$. In fact, without loss of generality, we can assume that $V$ is a finite-dimensional graded subspace of $H$. By a similar argument as in \cite[Theorem 5.1.1]{Mont}, $V$ is contained in a finite-dimensional graded subcoalgebra $C$ of $H$. Then the result follows from Remark \ref{grading} and an argument similar to that of Corollary \ref{approx}.
\end{remark}

We conclude this section with a proposition regarding the GK-dimension of a pointed Hopf algebra, which is a direct consequence of Corollary \ref{approx}.
\begin{proposition}\label{localfinite}
Let $H$ be a pointed Hopf algebra. Then 
$$\GK H=\sup_{E}\GK E,$$
where the supreme is taken over all finitely generated Hopf subalgebras of $H$.
\end{proposition}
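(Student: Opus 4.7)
The plan is to prove the two inequalities separately, with Corollary \ref{approx} doing all of the real work on the nontrivial direction. Recall that by definition
\[\GK H=\sup_V \limsup_{n\to\infty}\log_n \dim_k V^n,\]
where $V$ ranges over finite-dimensional subspaces of $H$, and the analogous formula computes $\GK E$ for any subalgebra $E$.

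First I would dispatch the easy inequality $\sup_E \GK E\le \GK H$. If $E$ is any subalgebra of $H$ (finitely generated or not), then every finite-dimensional subspace $V\subset E$ is also a finite-dimensional subspace of $H$, and $\dim_k V^n$ is the same whether computed in $E$ or in $H$. Thus $\GK E\le \GK H$, and taking the supremum preserves the inequality.

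For the reverse inequality $\GK H\le \sup_E \GK E$, I would fix an arbitrary finite-dimensional subspace $V\subset H$ and appeal directly to Corollary \ref{approx}: there exists a finitely generated Hopf subalgebra $E_V$ of $H$ containing $V$. Since $V\subset E_V$ and $E_V$ is a subalgebra, the powers $V^n$ computed in $H$ agree with those computed in $E_V$, so
\[\limsup_{n\to\infty}\log_n \dim_k V^n\le \GK E_V\le \sup_E \GK E.\]
Taking the supremum over all finite-dimensional $V\subset H$ on the left gives $\GK H\le \sup_E \GK E$, completing the proof.

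There is no real obstacle here; the proposition is essentially a formal consequence of Corollary \ref{approx}, which guarantees that the standard ``local finiteness'' characterization of GK-dimension (via finite-dimensional subspaces) can be upgraded to one via finitely generated Hopf subalgebras in the pointed setting. The only thing to be mildly careful about is to note that $\dim_k V^n$ is intrinsic to any subalgebra containing $V$, so nothing is lost by restricting to $E_V$.
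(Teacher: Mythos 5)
Your argument is correct and is exactly the one the paper intends: the paper simply remarks that the proposition ``is a direct consequence of Corollary \ref{approx}'', and your write-up fills in precisely that deduction (both inequalities via the definition of GK-dimension, with Corollary \ref{approx} supplying the finitely generated Hopf subalgebra containing any given finite-dimensional subspace). Nothing is different in substance; you have just made the omitted details explicit.
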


\section{Pointed Hopf algebras and their associated graded Hopf algebras}\label{associated}
Throughout this section, let $H$ be a pointed Hopf algebra with group-like elements $G$. We use $\gr H$ to denote the associated graded Hopf algebra of $H$ with respect to the coradical filtration. 
There is a canonical Hopf projection $\psi: \gr H\rightarrow H_0$. Let $R=(\gr H)^{co\psi}$, the algebra of coinvariants of $\psi$ \cite[1.5]{AS}. By definition $R$ is a graded subalgebra of $\gr H$. In fact, it is well known that $R$ is a graded braided Hopf algebra in $^{G}_G\mathcal{YD}$, the Yetter-Drinfeld category over $G$, and 
\begin{equation}\label{bidecomp}
\gr H\cong R\#H_0,
\end{equation}\label{smash}
as Hopf algebras.
Let $H_0^+$ be the $k$-space spanned by the elements of the form $1-g$ where $g\in G$. Notice that $HH_0^+$ is a coideal of $H$. Denote the coalgebra $H/HH_0^+$ by $\theta(H)$ and the coalgebra projection $H\rightarrow \theta(H)$ by $\pi_H$.
\begin{lemma}\label{flatness}
Retain the above notation. Then $H/H_n$ is a free (right) $H_0$-module for any $n\ge 0$.
\end{lemma}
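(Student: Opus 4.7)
The plan is to lift a homogeneous $k$-basis of $R$ to elements of $H$ and show that these lifts form a free right $H_0$-basis for $H$ that is compatible with the coradical filtration. By the isomorphism (\ref{bidecomp}), $\gr H \cong R\# H_0$, so each homogeneous component $H_m/H_{m-1}$ of $\gr H$ is, as a right $H_0$-module, isomorphic to $R(m)\otimes H_0$. I would fix a homogeneous $k$-basis $\{\bar{r}_\alpha\}_{\alpha\in A}$ of $R$, partitioned as $A=\bigsqcup_{m\ge 0}A_m$ with $\bar{r}_\alpha\in R(n_\alpha)$, and for each $\alpha$ choose a lift $r_\alpha\in H_{n_\alpha}$ such that $r_\alpha+H_{n_\alpha-1}$ corresponds to $\bar{r}_\alpha\otimes 1$ under the identification above.

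Since $H$ is pointed, $H_0=kG$ is a Hopf subalgebra of $H$, so the coradical filtration is an algebra filtration (Heyneman--Radford; see, e.g., \cite[Lemma 5.2.8]{Mont}); in particular $H_m\cdot H_0\subset H_m$, so each $H_m$, and hence $H/H_n$, inherits a right $H_0$-module structure. I would then prove by induction on $m$ that $\{r_\alpha g:\alpha\in\bigsqcup_{p\le m}A_p,\,g\in G\}$ is a $k$-basis of $H_m$: modulo $H_{m-1}$, the elements $\{r_\alpha g:\alpha\in A_m,\,g\in G\}$ match the basis $\{\bar{r}_\alpha\otimes g\}$ of $H_m/H_{m-1}\cong R(m)\otimes H_0$, and combining with the inductive basis of $H_{m-1}$ yields a basis of $H_m$. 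Equivalently, $\{r_\alpha:\alpha\in\bigsqcup_{p\le m}A_p\}$ is a free right $H_0$-basis of $H_m$.

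Taking the union over all $m$ shows that $\{r_\alpha\}_{\alpha\in A}$ freely generates $H$ as a right $H_0$-module, and consequently $\{r_\alpha+H_n:\alpha\in\bigsqcup_{p>n}A_p\}$ freely generates $H/H_n$, which is the desired conclusion. The main technical point to verify is that the right $H_0$-action on $H_m/H_{m-1}$ induced from multiplication in $H$ genuinely coincides with the action coming from the smash-product decomposition of $\gr H$; once the algebra filtration property is in hand this is immediate from the definitions of $\gr H$ and of $R\# H_0$, so the only real obstacle is the bookkeeping needed to align these two $H_0$-module structures across the filtration.
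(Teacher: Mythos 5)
Your proof is correct, and it rests on the same underlying fact as the paper's --- the freeness of each graded piece $H_{m+1}/H_m$ as a right $H_0$-module --- but you reach that fact and assemble the conclusion by a somewhat different route. The paper simply quotes Radford's lemma \cite[Lemma 1]{Ra} for the freeness of $H_{m+1}/H_m$ and then observes that extensions of free modules by free modules split, so that $H/H_n\cong\bigoplus_{j\ge0}H_{n+j+1}/H_{n+j}$ is free. You instead extract the freeness of the graded pieces from the biproduct decomposition $\gr H\cong R\# H_0$ of (\ref{bidecomp}), which identifies $H_m/H_{m-1}$ with $R(m)\otimes H_0$ as right $H_0$-modules, and then make the splitting explicit by lifting a homogeneous $k$-basis of $R$ to $H$. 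Both routes ultimately trace back to Radford: \cite[Lemma 1]{Ra} in the paper's case, and the biproduct theorem underlying (\ref{bidecomp}) in yours. Your argument is longer but yields slightly more, namely an explicit free right $H_0$-basis $\{r_\alpha\}$ of $H$ adapted to the coradical filtration, from which the freeness of every $H_m$ and of every quotient $H/H_n$ is read off simultaneously. The one point you rightly flag --- that the right $H_0$-action on $H_m/H_{m-1}$ induced by multiplication in $H$ agrees with the action on $R(m)\otimes H_0$ coming from the smash product --- is indeed immediate, since $\gr H\cong R\# H_0$ is an algebra isomorphism restricting to the identity on the degree-zero part $H_0$, and multiplication on $\gr H$ is by definition induced from that of $H$ via $H_mH_0\subset H_m$.
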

\begin{proof}
By \cite[Lemma 1]{Ra}, for any $m\ge0$, $H_{m+1}/H_m$ is a free $H_0$-module. Hence inductively we see that $H_{n+i}/H_n$ is a free $H_0$-module for any $i\ge1$ and $H_{n+i}/H_n\cong \bigoplus\limits_{j\ge 0}^{i-1}H_{n+j+1}/H_{n+j}$ as $H_0$-modules. As a consequence, $H/H_n\cong \bigoplus\limits_{j\ge 0}H_{n+j+1}/H_{n+j}$ as $H_0$-modules. The result then follows.
\end{proof}

\begin{lemma}\label{intersection}
Retain the above notation and let $I=HH_0^+$. Then $I\cap H_n=H_nH_0^+$ for any $n\ge 0$.
\end{lemma}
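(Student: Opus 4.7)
The plan is to combine the easy inclusion $H_n H_0^+ \subseteq I \cap H_n$ with a flatness argument based on Lemma \ref{flatness} for the reverse inclusion. The easy direction is immediate: since $H_0^+\subseteq H_0$ and the coradical filtration is multiplicative (so $H_n H_0 \subseteq H_n$, using the standard fact $H_iH_j\subseteq H_{i+j}$), the product $H_n H_0^+$ sits inside both $H_n$ and $I=HH_0^+$.

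For the nontrivial inclusion $I \cap H_n \subseteq H_n H_0^+$, I would exploit the fact that $H_n$ is a right $H_0$-submodule of $H$ and consider the short exact sequence of right $H_0$-modules
\[0 \to H_n \to H \to H/H_n \to 0.\]
By Lemma \ref{flatness}, the right $H_0$-module $H/H_n$ is free, and in particular flat. Therefore, tensoring over $H_0$ with the left $H_0$-module $k = H_0/H_0^+$ (where $H_0$ acts on $k$ via the counit $\e$) preserves exactness, giving
\[0 \to H_n \otimes_{H_0} k \to H \otimes_{H_0} k \to (H/H_n) \otimes_{H_0} k \to 0.\]
Using the standard identification $M \otimes_{H_0} k \cong M/M H_0^+$ for any right $H_0$-module $M$, this sequence reads as an injection $H_n/H_n H_0^+ \hookrightarrow H/HH_0^+$, which is exactly the claim that $H_n \cap HH_0^+ = H_n H_0^+$.

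No real obstacle should appear: the substantive work is already packaged in Lemma \ref{flatness}. The routine checks are (i) that $H_n$ is stable under right multiplication by $H_0$, so the filtration pieces are genuine right $H_0$-submodules, and (ii) the identification $M \otimes_{H_0} k \cong M/MH_0^+$ with $k$ carrying the counit $H_0$-module structure, both of which are standard.
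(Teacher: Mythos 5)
Your proposal is correct and follows essentially the same route as the paper: both use the short exact sequence $0\to H_n\to H\to H/H_n\to 0$, invoke the freeness of $H/H_n$ from Lemma \ref{flatness} to preserve exactness after applying $-\otimes_{H_0}k$, and read off the injectivity of $H_n/H_nH_0^+\to H/I$. The only difference is that you spell out the easy inclusion and the identification $M\otimes_{H_0}k\cong M/MH_0^+$ explicitly, which the paper leaves implicit.
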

\begin{proof}
For any $n\ge 0$, we have the short exact sequence
\[0\rightarrow H_n\rightarrow H\rightarrow H/H_n\rightarrow 0.\]
Since $H/H_n$ is a free $H_0$-module, the following sequence is exact, 
\[0\rightarrow H_n\otimes_{H_0}k\rightarrow H\otimes_{H_0}k\rightarrow (H/H_n)\otimes_{H_0}k\rightarrow 0.\]
This shows that the canonical map $H_n/H_nH_0^+\rightarrow H/I$ is injective, which implies that $I\cap H_n=H_nH_0^+$.
\end{proof}

Let $F_n$ be $\pi_H(H_n)\subset \theta(H)$. Then $\{F_n\}_{n\ge0}$ becomes a coalgebra filtration on $\theta(H)$. The following lemma is clear.
\begin{lemma}\label{inducedfil}
Suppose that $f: C\rightarrow D$ is a surjective coalgebra map and $C$ has a coalgebra filtration $\{A_n\}_{n\ge0}$. Let $B_n=f(A_n)$. Then $\{B_n\}_{n\ge 0}$ is a coalgebra filtration on $D$. Moreover, $f$ induces a surjective graded coalgebra map $\gr_AC\rightarrow \gr_B D$.
\end{lemma}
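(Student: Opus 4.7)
The plan is elementary and splits into two independent parts: first verify that $\{B_n\}_{n\ge 0}$ satisfies the three axioms of a coalgebra filtration on $D$, and then construct the induced map on associated graded coalgebras and check the claimed properties. I do not expect any serious obstacle, as the lemma reduces to routine diagram chasing.

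For the filtration axioms on $D$, the inclusions $B_n \subset B_{n+1}$ are immediate from $A_n \subset A_{n+1}$. Surjectivity of $f$ combined with $\bigcup_n A_n = C$ yields $\bigcup_n B_n = D$. For the comultiplicative compatibility, I apply $f \otimes f$ to the inclusion $\Delta_C(A_n) \subset \sum_{i+j=n} A_i \otimes A_j$. Since $f$ is a coalgebra map, $(f \otimes f)\circ \Delta_C = \Delta_D \circ f$, so restricting to $A_n$ yields $\Delta_D(B_n) \subset \sum_{i+j=n} f(A_i) \otimes f(A_j) = \sum_{i+j=n} B_i \otimes B_j$, which is the required property. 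Counit compatibility is automatic from $\varepsilon_D \circ f = \varepsilon_C$.

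For the induced map, set $A_{-1} = B_{-1} = 0$ and define $\overline{f} \colon \gr_A C \to \gr_B D$ componentwise by $a + A_{n-1} \mapsto f(a) + B_{n-1}$ for $a \in A_n$. Well-definedness follows from $f(A_{n-1}) = B_{n-1}$, and surjectivity on each graded piece follows from $B_n = f(A_n)$ by lifting representatives. The map is graded by construction. To see that it is a coalgebra map, recall that the comultiplication on $\gr_A C$ is obtained by taking $a \in A_n$, applying $\Delta_C$ to land in $\sum_{i+j\le n} A_i \otimes A_j$, and projecting onto $\bigoplus_{i+j = n} (A_i/A_{i-1}) \otimes (A_j/A_{j-1})$; the construction for $\gr_B D$ is analogous. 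Since $f$ intertwines $\Delta_C$ and $\Delta_D$ and maps $A_i$ onto $B_i$ at every level, the two induced comultiplications are intertwined by $\overline{f}$, so $\overline{f}$ is a graded coalgebra map.

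The only point that needs attention is the bookkeeping of indices to confirm that the associated graded construction really gives a well-defined graded coalgebra on both sides and that $\overline{f}$ descends compatibly; but this is immediate from the standard construction of the associated graded coalgebra attached to a coalgebra filtration, and requires no further technical input.
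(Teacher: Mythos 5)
Your proof is correct; the paper states this lemma without proof, introducing it with ``The following lemma is clear,'' and your routine verification (transporting the filtration axioms along $f\otimes f$ and checking that the induced graded map is well defined, surjective, and compatible with the comultiplications) is exactly the standard argument being taken for granted. No gaps.
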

By this lemma, we see that there is a surjective graded coalgebra map $\gr H\rightarrow \gr_F\theta(H)$ induced by $\pi_H$.

\begin{proposition}\label{Filtration}
Retain the above notation. Then $\gr_F\theta(H)$ is isomorphic to $\theta(\gr H)$ as graded coalgebras.
\end{proposition}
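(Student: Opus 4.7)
The plan is to apply Lemma~\ref{inducedfil} to the projection $\pi_H$, obtaining the surjective graded coalgebra map $\gr_F(\pi_H)\colon\gr H\to\gr_F\theta(H)$ mentioned just above the proposition, and then to identify its kernel with the coideal $(\gr H)(\gr H)_0^+$. Once this identification is made, $\gr_F(\pi_H)$ descends to a graded coalgebra isomorphism
\[\theta(\gr H)=(\gr H)/(\gr H)(\gr H)_0^+\xrightarrow{\sim}\gr_F\theta(H),\]
which is exactly what the proposition asserts.

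At degree $n$, the component of $\gr_F(\pi_H)$ is the map $H_n/H_{n-1}\to F_n/F_{n-1}$ induced by $\pi_H$, and its kernel is $(\pi_H^{-1}(F_{n-1})\cap H_n)/H_{n-1}$. Since $\ker\pi_H=HH_0^+$ and $\pi_H(H_{n-1})=F_{n-1}$, the preimage $\pi_H^{-1}(F_{n-1})$ equals $H_{n-1}+HH_0^+$. Intersecting with $H_n$, using the modular law (valid since $H_{n-1}\subset H_n$) and Lemma~\ref{intersection}, I get
\[\bigl(H_{n-1}+HH_0^+\bigr)\cap H_n=H_{n-1}+\bigl(HH_0^+\cap H_n\bigr)=H_{n-1}+H_nH_0^+.\]
On the other side, because $(\gr H)_0=H_0$ and $(\gr H)_0^+=H_0^+$ sits entirely in degree $0$, the degree-$n$ piece of $(\gr H)(\gr H)_0^+$ is $(H_n/H_{n-1})\cdot H_0^+=(H_nH_0^++H_{n-1})/H_{n-1}$, which matches the kernel computed above.

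Combining these two observations degree by degree shows $\ker(\gr_F(\pi_H))=(\gr H)(\gr H)_0^+$, so $\gr_F(\pi_H)$ factors through $\theta(\gr H)$ to produce the required graded coalgebra isomorphism; surjectivity is already part of Lemma~\ref{inducedfil}, and injectivity comes from the kernel computation. The heart of the argument, and the only nontrivial ingredient, is the identification $HH_0^+\cap H_n=H_nH_0^+$ from Lemma~\ref{intersection}, which in turn rests on the freeness established in Lemma~\ref{flatness}; with that result available, everything reduces to a clean diagram chase.
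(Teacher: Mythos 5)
Your argument is correct and follows essentially the same route as the paper: both reduce the statement to identifying, degree by degree, the kernel of the induced map $H_n/H_{n-1}\to \pi_H(H_n)/\pi_H(H_{n-1})$ with $(H_n/H_{n-1})H_0^+$, via the modular law and the key identity $HH_0^+\cap H_n=H_nH_0^+$ from Lemma~\ref{intersection}. No substantive difference.
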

\begin{proof}
By definition, $\theta(\gr H)=\gr H/(\gr H)H_0^+$. So we only have to show that the kernel of the map $\gr H\rightarrow \gr_F\theta(H)$ induced by $\pi_H$ is $(\gr H)H_0^+$. It suffices to prove that for any $n\ge 0$, the canonical map 
$H_{n+1}/H_n\rightarrow \pi_H(H_{n+1})/\pi_H(H_n)$ has kernel $(H_{n+1}/H_n)H_0^+$. 
Let $I=HH_0^+$. It is easy to check that the 
\[\ker (H_{n+1}/H_n\rightarrow \pi_H(H_{n+1})/\pi_H(H_n))=\frac{H_{n+1}\cap(H_n+I)}{H_n}=\frac{H_{n+1}\cap I+H_n}{H_n}.\]
By Lemma \ref{intersection}, $H_{n+1}\cap I=H_{n+1}H_0^+$. Therefore, 
\[\frac{H_{n+1}\cap I+H_n}{H_n}=\frac{H_{n+1}H_0^++H_n}{H_n}=(H_{n+1}/H_n)H_0^+.\]
This completes the proof.
\end{proof}
Now we are able to determine the coradical filtration of $\theta(H)$ by using the following lemma.
\begin{lemma}\label{gr}
Let $C$ be a coalgebra with a coalgebra filtration $\{F_n\}$ such that $F_0=C_0$. If the associated graded coalgebra with respect to $\{F_n\}$ is coradically graded, then $\{F_n\}$ agrees with the coradical filtration of $C$.
\end{lemma}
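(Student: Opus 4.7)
The plan is to prove $F_n = C_n$ by induction on $n$, the base case $F_0 = C_0$ being the hypothesis. In the inductive step I establish the two inclusions separately; the inclusion $F_{n+1} \subseteq C_{n+1}$ is formal, while the reverse inclusion $C_{n+1} \subseteq F_{n+1}$ is where the coradically graded hypothesis on $D := \gr_F C$ is essential. For $F_{n+1} \subseteq C_{n+1}$: given $x \in F_{n+1}$, the coalgebra-filtration axiom gives $\Delta(x) \in \sum_{i+j \le n+1} F_i \otimes F_j$; splitting this sum by whether $i \le n$ (so $F_i \subseteq F_n = C_n$ by induction) or $i = n+1$ (forcing $j = 0$ and $F_j = F_0 = C_0$) yields $\Delta(x) \in C_n \otimes C + C \otimes C_0$, and the wedge characterization $C_{n+1} = C_n \wedge C_0$ of the coradical filtration gives $x \in C_{n+1}$.

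For the reverse inclusion I argue by contradiction. Suppose $x \in C_{n+1}$ but $x \in F_N \setminus F_{N-1}$ for some $N \ge n+2$, and let $\bar x \in D(N) := F_N/F_{N-1}$ be the nonzero leading class. By the inductive hypothesis, $\Delta(x) \in C_n \otimes C + C \otimes C_0 = F_n \otimes C + C \otimes F_0$. Viewing $\Delta$ as a filtered map with respect to $(C \otimes C)_k := \sum_{i+j \le k} F_i \otimes F_j$, the projection of $\Delta(x) \in (C \otimes C)_N$ into the degree-$N$ summand $\bigoplus_{i+j = N} D(i) \otimes D(j)$ of $\gr(C \otimes C) = D \otimes D$ is exactly $\Delta_D(\bar x)$. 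A direct computation in a filtration-adapted basis shows that the image of $F_n \otimes C + C \otimes F_0$ in this degree-$N$ summand equals $\bigoplus_{i \le n} D(i) \otimes D(N-i) + D(N) \otimes D(0)$, and using $D_n = \bigoplus_{i \le n} D(i)$ (the identity \cite{CM} recalled in Section \ref{pre} available from the coradically graded hypothesis), this is contained in $D_n \otimes D + D \otimes D_0$. Hence $\Delta_D(\bar x) \in D_n \otimes D + D \otimes D_0$, so the wedge characterization in $D$ gives $\bar x \in D_n \wedge D_0 = D_{n+1} = \bigoplus_{i \le n+1} D(i)$; intersecting with $D(N)$ and using $N \ge n+2$ forces $\bar x = 0$, contradicting $x \notin F_{N-1}$.

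The main obstacle is the bookkeeping required to identify the degree-$N$ image of $F_n \otimes C + C \otimes F_0$ inside $\gr(C \otimes C)$; I would pin this down by choosing a vector-space basis $\{e_{i,\alpha}\}$ of $C$ so that $\{e_{i,\alpha}\}_\alpha$ descends to a basis of $D(i)$ for each $i$, which makes $F_n \otimes C$, $C \otimes F_0$, and $(C \otimes C)_N$ all coordinate subspaces and the intersection transparent. The coradically graded hypothesis enters the argument exactly once, to convert the coradical-filtration statement $\bar x \in D_{n+1}$ in $D$ into the degree constraint $\bar x \in \bigoplus_{i \le n+1} D(i)$, which is precisely what closes off the degrees $N \ge n+2$ and drives the contradiction.
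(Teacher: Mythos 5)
Your proof is correct and follows essentially the same route as the paper's: the inclusion $F_n\subseteq C_n$ is handled formally via the wedge characterization, and the reverse inclusion is obtained by passing to $\gr_F C$, observing that the leading term of $\Delta(x)$ computes $\Delta_{\gr}(\bar x)$, and using the coradically graded hypothesis to force the high-degree class $\bar x$ to vanish. The only differences are organizational (induction on $n$ versus a minimal counterexample, and landing in $D_{n+1}$ rather than $(\gr_F C)_1$), and your treatment of the degree-$N$ component is somewhat more explicit than the paper's.
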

\begin{proof}
Denote the associated graded coalgebra with respect to $\{F_n\}$ by $\gr _FC$.
By the definition of coalgebra filtration and the fact that $F_0=C_0$, it is easy to see that $F_n\subset C_n$ for any $n\ge1$. If the assertion is not true, then choose $n$ to be minimal such that $F_n\subsetneq C_n$. Pick some element $y\in C_n\backslash F_n$. Then 
\begin{equation}\label{com}
\Delta(y)\in \sum\limits_{i=0}^{n}C_i\otimes C_{n-i}= C_n\otimes F_0+F_0\otimes C_n+\sum\limits_{i=1}^{n-1}F_i\otimes F_{n-i}.
\end{equation}
Suppose $y\in F_m\backslash F_{m-1}$ for some $m\ge n+1$. Let $\overline{y}$ be the corresponding non-zero element in $\gr_FC(m)$. Since $\gr _FC$ is coradically graded, $\overline{y}$ is not in $(\gr _FC)_{m-1}$. But by $(\ref{com})$, $\overline{y}$ is in $(\gr _FC)_{1}$, which is a contradiction. This completes the proof.
\end{proof}

\begin{proposition}\label{piH}
The coradical filtration of the coalgebra $\theta(H)$ is $\{\pi_H(H_n)\}_{n\ge 0}$.
\end{proposition}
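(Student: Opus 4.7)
The plan is to invoke Lemma \ref{gr} for the coalgebra $\theta(H)$ equipped with the filtration $\{F_n\}_{n\ge 0}$, which is already a coalgebra filtration on $\theta(H)$ by Lemma \ref{inducedfil}. To apply Lemma \ref{gr}, two hypotheses need to be checked: that $F_0$ coincides with the coradical $\theta(H)_0$, and that the associated graded coalgebra $\gr_F\theta(H)$ is coradically graded.

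For the first hypothesis, I would first show that $F_0=k\cdot 1$. Indeed, for every group-like $g\in G$ one has $g-1\in H_0^+\subseteq HH_0^+$, so $\pi_H(g)=\pi_H(1)$; since $H_0$ is spanned by its group-likes, $F_0=\pi_H(H_0)=k\cdot 1$. Then \cite[Lemma 5.3.4]{Mont} (used earlier in the proof of Proposition \ref{coradicaluniversal}) gives $\theta(H)_0\subseteq F_0$ for any coalgebra filtration, while the reverse inclusion is automatic, so $\theta(H)_0=F_0$.

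For the second hypothesis, Proposition \ref{Filtration} identifies $\gr_F\theta(H)$ with $\theta(\gr H)$ as graded coalgebras, which reduces the problem to showing $\theta(\gr H)$ is coradically graded. Using the decomposition $\gr H\cong R\#H_0$ from (\ref{bidecomp}), the projection $\gr H\rightarrow \theta(\gr H)=\gr H/(\gr H)H_0^+$ amounts to killing $H_0^+$, so $\theta(\gr H)\cong R$ as graded coalgebras. Since $\gr H$ is coradically graded (Remark \ref{coradically}) and the coradical filtration of the biproduct should satisfy $(R\#H_0)_n=R_n\#H_0$, comparison with the grading $(\gr H)_n=\bigoplus_{i\le n}R(i)\#H_0$ forces $R_n=\bigoplus_{i\le n}R(i)$, i.e.\ $R$ is coradically graded.

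The most delicate step is the last one: transferring the coradically graded property from $\gr H$ to the coinvariant subalgebra $R$. This requires a careful analysis of the interaction between the coradical filtration on $\gr H$, the Radford biproduct coalgebra structure on $R\#H_0$, and the fact that $H_0$ is cosemisimple (so all its coradical layers are $H_0$ itself). With this established, Lemma \ref{gr} immediately yields the claim that $\{F_n\}_{n\ge 0}=\{\pi_H(H_n)\}_{n\ge 0}$ is the coradical filtration of $\theta(H)$.
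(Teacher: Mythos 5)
Your proposal follows essentially the same route as the paper: identify $\gr_F\theta(H)$ with $\theta(\gr H)$ via Proposition \ref{Filtration}, identify $\theta(\gr H)$ with $R$, show $R$ is coradically graded, and apply Lemma \ref{gr}. Your verification that $F_0=\theta(H)_0$ (group-likes collapse to $\pi_H(1)$, then \cite[Lemma 5.3.4]{Mont}) is a correct detail that the paper leaves implicit, and it is genuinely needed to invoke Lemma \ref{gr}.

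The one substantive issue is that the step you yourself flag as ``the most delicate'' is exactly the step you do not carry out. First, the identification $\theta(\gr H)\cong R$ as \emph{graded coalgebras} is not simply ``killing $H_0^+$'': the coalgebra structure on $R\#H_0$ is the smash coproduct, not the tensor product coalgebra, so this identification is the content of Radford's theorem, which the paper cites as the proof of \cite[Theorem 3]{R}. Second, your proposed derivation of the coradical gradedness of $R$ rests on the unproved assertion $(R\#H_0)_n=R_n\#H_0$ (``should satisfy''), and you explicitly defer the analysis needed to establish it. That assertion is in fact a known result for biproducts over a cosemisimple Hopf algebra, and the paper closes this gap by citing \cite[p.15]{AS} directly for the statement that $R$ is coradically graded. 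So the architecture of your argument is right and matches the paper, but as written the proof is incomplete at its crucial point; either supply the biproduct coradical computation or replace it with the citation.
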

\begin{proof}
By Proposition \ref{Filtration}, $\gr_F\theta(H)\cong \theta(\gr H)$ as coalgebras. By the proof of \cite[Theorem 3]{R}, $\theta(\gr H)\cong R$ as graded coalgebras, where $R$ is defined in $(\ref{bidecomp})$. By \cite[p.15]{AS}, $R$ is coradically graded. The result now follows from Lemma \ref{gr}.
\end{proof}

\section{GK-dimensions of $H$ and $\gr H$}

This section is devoted to the proof of Theorem \ref{equality}. Let $H$ be a pointed Hopf algebra with group-like elements $G$. As mentioned in the previous section, $\gr H$ has a decomposition
$
\gr H\cong R\#H_0$. By construction, the graded algebra $R$ is connected in the sense that $R(0)=k$. So if $R$ is finitely generated as an algebra, then it is locally finite.

\begin{lemma}\label{socole}
Retain the above notation. Let $D$ be a graded subcoalgebra of $\gr H$. Then 
\[D\subset \bigoplus_{i\ge0}\bigoplus_{h\in G(D)}R(i)h. \]
\end{lemma}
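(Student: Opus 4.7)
The plan is to use the right $H_0$-coaction $\delta_R := (\mathrm{id} \otimes \psi)\Delta$ on $\gr H$ (where $\psi: \gr H \to H_0$ is the canonical Hopf projection) to decompose each element of $D$ into individual summands $r_g \# g$ that still lie in $D$, and then a bidegree argument on $\Delta$ forces each relevant $g$ to lie in $G(D)$.

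I would first establish two preparatory points. \textbf{(i)} $D(0) = k G(D)$: since $\Delta$ is graded and $\Delta(D) \subseteq D \otimes D$, the subspace $D(0) = D \cap (\gr H)(0) = D \cap kG$ is itself a subcoalgebra of $kG$, hence of the form $kT$ for some $T \subseteq G$; a direct check gives $T = G \cap D = G(D)$. \textbf{(ii)} The identity
\[
\delta_R(r \# g) = (r \# g) \otimes g,
\]
obtained by applying $\mathrm{id} \otimes \psi$ to the smash coproduct formula
\[
\Delta(r\#g) = \sum r^{(1)} \# (r^{(2)})_{(-1)} g \otimes (r^{(2)})_{(0)} \# g,
\]
and invoking the counit identity $\epsilon_R(r_{(0)}) r_{(-1)} = \epsilon_R(r) 1_{H_0}$ for the left $H_0$-coaction on $R$, followed by counitality of $\Delta_R$. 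A further remark: on $x \in (\gr H)(n)$ the element $\delta_R(x)$ coincides with the bidegree $(n,0)$-component of $\Delta(x)$, because the degree-zero projection $\gr H \twoheadrightarrow (\gr H)(0) = H_0$ agrees with $\psi$ (using the graded connectedness $R(0) = k$ mentioned in the preceding paragraph of the text).

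With these in hand, the argument proceeds as follows. Since $D$ is graded it suffices to consider $d \in D(n)$ homogeneous, and write $d = \sum_g r_g \# g$ with $r_g \in R(n)$. Applying the identity above,
\[
\delta_R(d) = \sum_g (r_g \# g) \otimes g \;\in\; \delta_R(D) \subseteq D \otimes H_0 = \bigoplus_{g \in G} D \otimes g,
\]
and linear independence of $\{g\}_{g \in G}$ forces $r_g \# g \in D$ for every $g$. For each $g$ with $r_g \neq 0$, graded-ness of $\Delta$ together with the subcoalgebra property yields $\Delta(D(n)) \subseteq \bigoplus_{i+j=n} D(i) \otimes D(j)$, so the bidegree $(n,0)$-piece of $\Delta(r_g \# g)$ lies in $D(n) \otimes D(0)$. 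But that piece equals $\delta_R(r_g \# g) = (r_g \# g) \otimes g$, and since $r_g \# g \neq 0$ we conclude $g \in D(0) = kG(D)$; being group-like, $g \in G(D)$.

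The principal obstacle is the bookkeeping behind the identity $\delta_R(r \# g) = (r \# g) \otimes g$, which requires careful use of both the counit axioms for $\Delta_R$ and the compatibility of $\epsilon_R$ with the left $H_0$-coaction on $R$; once this identity (and its interpretation as the $(n,0)$-component of $\Delta$) is secured, the remaining assembly is a formal consequence of graded-ness and the subcoalgebra property.
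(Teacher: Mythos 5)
Your proof is correct and follows essentially the same route as the paper: both apply $(\mathrm{id}\otimes\psi)\Delta$ to a homogeneous element written as $\sum_g r_g\# g$, observe that the result $\sum_g (r_g\# g)\otimes g$ must land in $D\otimes D(0)=D\otimes kG(D)$, and use linear independence of group-like elements to conclude each relevant $g$ lies in $G(D)$. Your write-up merely fills in more of the bookkeeping (the smash-coproduct computation of $\delta_R(r\# g)$, the identification $D(0)=kG(D)$, and the intermediate step that each summand $r_g\# g$ already lies in $D$) that the paper leaves implicit.
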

\begin{proof}
Let $y$ be a non-zero element in $D$. By assumption we can further assume that $y$ is homogeneous of degree $s$. If $s=0$, then $y\in D(0)=kG(D)$. If $s\ge 1$, then by the decomposition (\ref{bidecomp}), we can write $y=\sum_{i=1}^{N}y_ih_i$ where $h_i$'s are distinct group-like elements and $0\neq y_i\in R(n)$. We only need to show that $h_i\in G(D)$ for any $i$. Let $\psi: \gr H\rightarrow H_0$ be the canonical Hopf projection. Then it is obvious that $\psi$ maps $D$ onto $D(0)=kG(D)$. Now we have
\[(Id\otimes \psi)\Delta(y)=\sum_{i=1}^{N}y_ih_i\otimes h_i\in D\otimes D(0).\]
Hence $h_i\in G(D)$ by the choice of $y_i$ and $h_i$.
\end{proof}

The next lemma is about the GK-dimensions of skew group algebras. It can be viewed as a generalization of \cite[Lemma 5.5]{Z}. Let $\Gamma$ be a group and $A$ an algebra with a left $G$-action. As $k$-spaces, the skew group algebra $A*\Gamma$ is isomorphic to $A\otimes k\Gamma$. The multiplication is given by 
\[(a*g)(b*h)=a(g.b)*gh,\]
where $a, b\in A$, $g, h\in \Gamma$ and $a*g$ stands for $a\otimes g$. We will omit the $*$ in $a*g$ if there is no confusion. We say that the $\Gamma$-action on $A$ is \textbf{locally finite} if any finite-dimensional subspace of $A$ is contained in a finite-dimensional $\Gamma$-submodule of $A$.

\begin{lemma}\label{addition}
Let $A$ and $\Gamma$ be as above and suppose that the $\Gamma$-action on $A$ is locally finite. Then 
\[\GK A*\Gamma=\GK A+ \GK k\Gamma.\]
\end{lemma}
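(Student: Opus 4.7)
The plan is to establish the two inequalities $\GK (A * \Gamma) \leq \GK A + \GK k\Gamma$ and $\GK (A * \Gamma) \geq \GK A + \GK k\Gamma$ separately. Both rely on the vector-space identification $A * \Gamma \cong A \otimes k\Gamma$ together with the local finiteness hypothesis, which lets me, for any finite-dimensional $V \subset A$, enlarge $V$ to a $\Gamma$-stable finite-dimensional $V' \supset V$. Throughout I would assume without loss of generality that all finite-dimensional subspaces under consideration contain $1$, and that any finite-dimensional subspace of $k\Gamma$ is spanned by finitely many group elements.

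For the upper bound, I would take any finite-dimensional $U \subset A * \Gamma$ and embed it inside $V' \otimes W$ with $V' \subset A$ a $\Gamma$-stable finite-dimensional subspace and $W \subset k\Gamma$ a finite-dimensional subspace spanned by group elements. The skew multiplication rule $(a \otimes g)(b \otimes h) = a(g.b) \otimes gh$, combined with $\Gamma$-stability of $V'$, gives $(V' \otimes W)^n \subset V'^n \otimes W^n$ by a straightforward induction. Hence $\dim U^n \leq \dim V'^n \cdot \dim W^n$, and subadditivity of $\limsup$ on nonnegative sequences followed by taking the supremum over $U$ yields the upper bound.

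For the lower bound, I would choose a $\Gamma$-stable finite-dimensional $V' \subset A$ with $1 \in V'$ and a finite-dimensional $W \subset k\Gamma$ with $1 \in W$ spanned by group elements, and consider $V' + W \subset A * \Gamma$. The same multiplication rule yields the key commutation $WV' \subset V' W$: for a basis element $g$ of $W$ and $v \in V'$, $(1 \otimes g)(v \otimes 1) = (g.v) \otimes g \in V' \otimes W$ by $\Gamma$-stability of $V'$. Consequently every length-$n$ product from $V' \cup W$ can be normalized into a product lying in $V'^a W^b$ for some $a + b \leq n$, and since $V'^a W^b = V'^a \otimes W^b$ as subspaces of $A \otimes k\Gamma$, one obtains
\[
\dim(V' + W)^{2m} \geq \dim V'^m \cdot \dim W^m.
\]
Taking logarithms, dividing by $\log(2m)$, passing to $\limsup_m$, and then taking the supremum over $V'$ and $W$ would yield the lower bound.

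The main obstacle is the final limsup manipulation in the lower bound: in general $\limsup(a_m + b_m) \leq \limsup a_m + \limsup b_m$, so the passage from the dimension inequality to the statement about GK dimensions is not immediate. I would handle this by exploiting the monotonicity of $\dim V'^m$ and $\dim W^m$ in $m$ (a consequence of $1 \in V' \cap W$), which allows one to extend subsequential growth to growth on doubling intervals $[m_k, 2m_k]$ and hence to extract a single sequence along which both $V'$ and $W$ realize their growth rates simultaneously up to arbitrarily small error. This is essentially the classical technique behind the proof of $\GK(A \otimes B) \geq \GK A + \GK B$ under affine hypotheses, and is the technical content of \cite[Lemma 5.5]{Z} that the present lemma generalizes.
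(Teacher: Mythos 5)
Your upper bound is fine, and your reduction of the lower bound to $\dim_k(V'+W)^{2m}\ge \dim_k V'^m\cdot\dim_k W^m$ is also correct. The gap is exactly where you locate it, and the repair you propose does not work. From that dimension inequality you only obtain
\[
\GK A*\Gamma\;\ge\;\sup_{V',W}\ \varlimsup_{m\rightarrow\infty}\bigl(\log_m\dim_k V'^m+\log_m\dim_k W^m\bigr),
\]
and $\varlimsup(a_m+b_m)$ can be strictly smaller than $\varlimsup a_m+\varlimsup b_m$ even when $\dim_k V'^m$ and $\dim_k W^m$ are nondecreasing: monotonicity does not let you synchronize the two limsups, since each ratio $\log_m\dim_k V'^m$ can still oscillate on sparse scales (the powers need only stall while $m$ roughly squares). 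This is not a removable technicality; it is precisely the Warfield phenomenon that $\GK(A\otimes B)<\GK A+\GK B$ can occur even for finitely generated algebras, so there is no ``classical technique under affine hypotheses'' of the kind you invoke. What actually rescues the lower bound here is that the second factor is a group algebra: for a finitely generated subgroup $L$ of polynomial growth, Gromov's theorem together with the Bass--Guivarc'h formula gives $\dim_k W^m\sim m^d$ with $d=\GK kL$ an integer, so $\log_m\dim_k W^m$ is a genuine limit and the limsup of the sum does split (while if some $L$ has superpolynomial growth both sides are infinite). That group-theoretic input is the real content of \cite[Lemma 5.5]{Z}; it cannot be replaced by a doubling-interval argument.

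For comparison, the paper never re-proves this analytic step: it uses local finiteness only to reduce to $\Gamma$-affine subalgebras $B$ and finitely generated subgroups $L$, quotes \cite[Lemma 5.5]{Z} verbatim for $\GK B*L=\GK B+\GK kL$, and then observes that the supremum over pairs $(B,L)$ splits as a sum of suprema. To complete your version you must either cite that lemma as a black box at the point where you combine the two limsups, as the paper does, or supply the Gromov/Bass input explicitly.
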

\begin{proof} We say a subalgebra $B$ of $A$ is $\mathbf{\Gamma}$-\textbf{affine} if $B$ is generated as an algebra by a finite-dimensional $\Gamma$-submodule of $A$.
 It is easy to check by the local finiteness condition that 
\[\GK A=\sup\limits_B \GK B,\]
where $B$ runs over all $\Gamma$-affine subalgebras of $A$.
Next we claim that 
\[\GK A*\Gamma=\sup\limits_{B, L}\GK B*L,\]
where $B$ runs over all $\Gamma$-affine subalgebras of $A$ and $L$ runs over all finitely generated subgroups of $\Gamma$. In fact, by the definition of the GK-dimension, $\GK A*\Gamma=\sup\limits_{E} \GK E$, where the supremum is taken over all finitely generated subalgebras $E$ of $A*\Gamma$. Let $V$ be a finite-dimensional generating subspace of $E$. Then there exists $g_1, \cdots, g_s\in G$ and some finite-dimensional subspace $W$ of $A$ such that $V\subset Wg_1+Wg_2+\cdots Wg_s$. By the local finiteness condition we can further 
assume that $W$ is a finite-dimensional $\Gamma$-submodule of $A$. 

Let $B$ be the subalgebra of $A$ generated by $W$ and let $L$ be the subgroup of $G$ generated by $g_1, \cdots, g_s$ and their inverses. Then it is clear that $B$ is $\Gamma$-affine and $E\subset B*L$. This prove the claim.
Now, 
\begin{align*}
\GK A*\Gamma&=\sup\limits_{B, L}\GK B*L\\
&=\sup\limits_{B, L}( \GK B+\GK kL)\\
&=\sup\limits_B \GK B+\sup\limits_L\GK kL\\
&=\GK A+\GK k\Gamma.
\end{align*}
The second equality follows from \cite[Lemma 5.5]{Z}. For the last equality, one just notices that $\GK k\Gamma=\sup\limits_L\GK kL$ by Proposition \ref{localfinite}.
\end{proof}

Recall from \cite[\S 5.4]{Mont} that for $g\in G$, 
\[P_{1, g}(H)=\{x\in H \,|\,\Delta(x)=x\otimes1+g\otimes x\}.\]
Let $P'_{1, g}(H)$ be a subspace of $P_{1, g}(H)$ such that $P_{1, g}(H)=k(1-g)\oplus P'_{1, g}(H)$ and define
\[P'_T(H)=\bigoplus_{g\in G}P'_{1, g}(H).\]

\begin{lemma}\label{1finite}
Let $H$ be a pointed Hopf algebra. Then the following statements are equivalent.
\begin{itemize}
\item[\textup{(I)}] $\dim_k P'_T(H)< \infty$,
\item[\textup{(II)}] $\dim_k R(1)<\infty$,
\item[\textup{(III)}] the graded algebra $R$ is locally finite,
\item[\textup{(IV)}] the coalgebra $\theta(H)$ is coradically finite.
\end{itemize}
\end{lemma}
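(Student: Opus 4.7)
The plan is to prove the chain (IV) $\Leftrightarrow$ (III) $\Leftrightarrow$ (II) $\Leftrightarrow$ (I).

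For (III) $\Leftrightarrow$ (IV), I would combine Proposition \ref{Filtration} with the isomorphism $\theta(\gr H) \cong R$ of graded coalgebras (already invoked in the proof of Proposition \ref{piH}) to identify the associated graded coalgebra of $\theta(H)$ with respect to its coradical filtration with $R$ itself. The resulting equality $\dim_k \theta(H)_n = \sum_{i=0}^n \dim_k R(i)$ makes the equivalence immediate.

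For (II) $\Leftrightarrow$ (III), only (II) $\Rightarrow$ (III) requires work. First I would observe that since $R$ is coradically graded with $R(0)=k$, a short counit computation forces $P(R)=R(1)$. Consequently, for $n\ge 2$ the reduced coproduct $\tilde{\Delta}(x)=\Delta(x)-x\otimes 1-1\otimes x$ is injective on $R(n)$ with image in $\bigoplus_{i+j=n,\, i,j\ge 1}R(i)\otimes R(j)$, which gives the inductive bound
\[\dim_k R(n)\le \sum_{\substack{i+j=n\\ i,j\ge 1}}\dim_k R(i)\,\dim_k R(j).\]
Induction on $n$ starting from $\dim_k R(1)<\infty$ then yields local finiteness of $R$.

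For (I) $\Leftrightarrow$ (II), the bosonization $\gr H \cong R\# kG$ together with the $G$-grading $R=\bigoplus_{g\in G}R^g$ coming from the Yetter-Drinfeld structure yields, by a direct coproduct computation, $P_{1,g}(\gr H)=k(1-g)\oplus R(1)^g$ for every $g\in G$, and hence $P'_T(\gr H)\cong R(1)$. To pass from $\gr H$ back to $H$, the reduction $H_1\twoheadrightarrow H_1/H_0=(\gr H)(1)$ induces injections $P'_{1,g}(H)\hookrightarrow P'_{1,g}(\gr H)$ for each $g$. Applying the Taft-Wilson theorem to both $H$ and $\gr H$ expresses $H_1/H_0$ and $(\gr H)(1)$ as direct sums $\bigoplus_{g,h}P'_{g,h}(H)$ and $\bigoplus_{g,h}P'_{g,h}(\gr H)$ respectively, and since the two decompositions share the same total dimension and the comparison map respects the $(g,h)$-grading, each component injection is forced to be an isomorphism. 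Summing over $g$ gives $P'_T(H)\cong R(1)$.

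The main obstacle is this last step, namely surjectivity of $P'_{1,g}(H)\to P'_{1,g}(\gr H)$. Rather than attempting to lift skew primitives directly from $\gr H$ to $H$ (which would require solving a cocycle-type equation in $H_0$), I would derive surjectivity cleanly from the total dimension comparison provided by the Taft-Wilson decompositions on both sides.
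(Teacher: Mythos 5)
Your plan is correct and follows essentially the same route as the paper's proof, which handles (I)$\Leftrightarrow$(II) by citing the Taft--Wilson theorem \cite[Theorem 5.4.1(1)]{Mont} to identify $P'_T(H)\cong R(1)$, handles (II)$\Leftrightarrow$(III) by citing \cite[Lemma 2.3 (2)]{MS}, and handles (III)$\Leftrightarrow$(IV) exactly as you do via Propositions \ref{Filtration} and \ref{piH}; your proposal simply unpacks those two citations into direct arguments. One small caution on your final step: ``equal total dimension'' does not by itself force the componentwise injections to be isomorphisms when the spaces involved are infinite-dimensional (which is precisely the case one must worry about here), so you should instead observe that the comparison map is the identity on the common total space $H_1/H_0=(\gr H)(1)$, whence the two Taft--Wilson direct-sum decompositions together with the componentwise containments force equality in each component.
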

\begin{proof}
 By \cite[Theorem 5.4.1(1)]{Mont} and the definition of $R$, $P'_T(H)\cong R(1)$ as $k$-spaces. Hence \textup{(I)} and \textup{(II)} are equivalent. Since $R$ is a coradically graded coalgebra and $R(0)=k$, the equivalence of \textup{(II)} and \textup{(III)} follows from \cite[Lemma 2.3 (2)]{MS}. As shown in the proof of Theorem \ref{equality}, $\dim_k \pi_H(H_n)=\dim_k\sum_{i=1}^{n} R(i)$. By Proposition \ref{piH}, $\{\pi_H(H_n)\}_{n\ge0}$ is the coradical filtration of $\theta(H)$. This shows that \textup{(III)} and \textup{(IV)} are equivalent.
\end{proof}

Now we are ready to prove the main theorem of this section.
\begin{theorem}\label{equality}
Retain the above notation. Suppose that $\dim_k R(1)<\infty$. Then
\begin{equation}\label{ineq}
\GK R+ \GK kG=\GK \gr H\le\GK H\le\GK kG+\gamma,
\end{equation}
where $\gamma=\varlimsup\limits_{n\rightarrow \infty}\log_n \dim_k  \pi_H(H_n)=\varlimsup\limits_{n\rightarrow \infty}\log_n \dim_k \bigoplus_{i=0}^{n} R(i)$. If $R$ is a finitely generated algebra, then 
\begin{equation}\label{eq}
\GK R+ \GK kG=\GK \gr H=\GK H=\GK kG+\gamma,
\end{equation}
\end{theorem}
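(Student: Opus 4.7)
The plan is to establish the inequality chain (\ref{ineq}) in three separate steps and then upgrade it to the equality (\ref{eq}) under the finitely-generated hypothesis on $R$. A preliminary observation is that the two expressions for $\gamma$ coincide: combining Proposition \ref{Filtration} with the identification $\theta(\gr H) \cong R$ used in the proof of Proposition \ref{piH}, one has $\dim_k \pi_H(H_n) = \dim_k F_n = \sum_{i=0}^n \dim_k R(i)$ for every $n$.

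The first equality $\GK R + \GK kG = \GK \gr H$ follows from Lemma \ref{addition} applied to the smash product decomposition $\gr H \cong R\# kG$, viewed as a skew group algebra of $R$ by $G$. The required local-finiteness of the $G$-action is precisely where $\dim_k R(1) < \infty$ is used: by Lemma \ref{1finite}, $R$ is locally finite, and since the $G$-action preserves the grading of $R$, each finite-dimensional $R(i)$ is a $G$-submodule, hence any finite-dimensional subspace of $R$ sits inside $\bigoplus_{i \le k} R(i)$, a finite-dimensional $G$-stable subspace. The inequality $\GK \gr H \le \GK H$ follows from the standard filtered/graded dimension comparison: for a finite-dimensional homogeneous subspace $W = \bigoplus_{i \le d} W_i \subset \gr H$ with $W_i \subset H_i/H_{i-1}$, choose degree-preserving lifts to obtain $\tilde W = \sum_i \tilde W_i \subset H_d$; each product $w_1 \cdots w_m$ with $w_j \in W_{i_j}$ is the image of $\tilde w_1 \cdots \tilde w_m \in H_{\sum i_j}$ in the appropriate graded piece, so $W^m$ sits inside the associated graded of $\tilde W^m$ with respect to the coradical filtration on $H$, giving $\dim W^m \le \dim \tilde W^m$ and thus $\GK \langle W\rangle \le \GK \langle\tilde W\rangle \le \GK H$.

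The inequality $\GK H \le \GK kG + \gamma$ is the technical core of the theorem, and is where Lemma \ref{socole} is used. By Proposition \ref{localfinite}, one may assume $H$ is finitely generated; then by Corollary \ref{0finite} there exists a finite-dimensional subcoalgebra $D \subset H_n$ that generates $H$ and such that $H_0$ is generated by $G(D)$. Set $\Gamma = kG(D) + kG(D)^{-1}$, a finite-dimensional generating subspace of $kG$. For each $m$, $D^m$ is a subcoalgebra of $H$ lying in $H_{nm}$, and its associated graded $\gr(D^m) = \bigoplus_k (D^m \cap H_k)/(D^m \cap H_{k-1})$ is a graded subcoalgebra of $\gr H$ of the same dimension as $D^m$, concentrated in degrees $\le nm$. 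Its group-likes coincide with $G(D^m)$ (since any group-like of $\gr(D^m)$ must lie in its degree-$0$ part $D^m \cap H_0$), and by Corollary \ref{approx} this set consists of at most $m$-fold products of elements of $G(D)$, so $|G(D^m)| \le \dim \Gamma^m$. Lemma \ref{socole} then gives
\[\gr(D^m) \subset \bigoplus_{0 \le i \le nm}\bigoplus_{h \in G(D^m)} R(i)\,h,\]
so $\dim D^m = \dim \gr(D^m) \le |G(D^m)| \cdot \dim F_{nm} \le \dim \Gamma^m \cdot \dim F_{nm}$. Taking $\varlimsup \log_m$ and using $\varlimsup_m \log_m \dim F_{nm} = \gamma$ (for fixed $n$) and $\varlimsup_m \log_m \dim \Gamma^m = \GK kG$ yields $\GK H \le \GK kG + \gamma$. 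Finally, to close (\ref{ineq}) into (\ref{eq}) when $R$ is finitely generated, note that $R$ is a connected graded algebra finitely generated as an algebra, whence the standard Hilbert-series computation gives $\GK R = \varlimsup_m \log_m \dim \bigoplus_{i \le m} R(i) = \gamma$; substituting into (\ref{ineq}) collapses the chain to equalities. The main obstacle I foresee is the necessity of passing from $D^m \subset H$ to $\gr(D^m) \subset \gr H$ before Lemma \ref{socole} can be applied, which is exactly why the coradical-filtration machinery developed in Section \ref{associated} is indispensable.
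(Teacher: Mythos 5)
Your proposal is correct and follows essentially the same route as the paper: the decomposition $\gr H\cong R\#kG$ with Lemma \ref{addition} for the first equality, and for the upper bound the passage from a finite-dimensional subcoalgebra $D\subset H_N$ to $\gr(D^n)\subset\gr H$, the group-like count via Corollary \ref{approx}, the containment from Lemma \ref{socole}, and finally \cite[Proposition 6.6]{KL} (your Hilbert-series remark) to identify $\GK R$ with $\gamma$. The only cosmetic differences are that you prove $\GK\gr H\le\GK H$ by hand rather than citing \cite[Lemma 6.5]{KL}, and you first reduce to $H$ finitely generated via Proposition \ref{localfinite} (which strictly requires the easy compatibility $\gamma_E\le\gamma$ for Hopf subalgebras $E$), whereas the paper avoids this by bounding $\dim_k C^n$ for an arbitrary finite-dimensional subcoalgebra $C$ directly.
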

\begin{proof}
Let $V_n=\bigoplus_{i=1}^{n} R(i)$. By Lemma \ref{1finite}, $V_n$ is finite-dimensional for any $n$. Also, $\{V_n\}_{n\ge0}$ is the coradical filtration of $R$ since $R$ is a coradically graded coalgebra. On the other hand, $R\cong \theta(\gr H)$ as graded coalgebras. It then follows from Proposition \ref{Filtration} and Proposition \ref{piH} that $V_n\cong  \pi_H(H_n)$ as $k$-spaces.
 As a consequence, the number $\gamma$ is well defined.
 
It is well known that $R\#kG$ is just $R*G$ as algebras. Moreover, the $G$-action on $R$ preserves the grading. Since every finite-dimensional subspace $V$ of $R$ is contain in $V_s$ for some $s\ge0$, we see that the $G$-action on $R$ is locally finite. Now by Lemma \ref{addition}, $\GK R+ \GK kG=\GK \gr H$. By \cite[Lemma 6.5]{KL}, $\GK\gr H\le \GK H$. Next we are going to show that $\GK H\le\GK kG+\gamma$.

Now let $C$ be a finite-dimensional subspace of $H$. Without loss of generality, we can assume that $C$ is a subcoalgebra of $H$. Let $S=G(C)$. By the choice of $C$, the set $S$ is finite. Denote by $G_S(\ell)$ the set of elements in $G$ that can be expressed 
as products of $\le \ell$ elements in $S\cup S^{-1}$ and let $g_S(\ell)=|G_S(\ell)|$. By Corollary \ref{approx}, $G(C^\ell)\subset G_S(\ell)$.
Suppose that $C\subset H_N$ for some $N\ge 1$. Then $C^n\subset H_{nN}$. Let $D=\gr C^n$, the associated graded coalgebra of $C^n$ with respect to its coradical filtration. Notice that $D$ is naturally embedded in $\gr H$. Since $G(D)$ can be identified with $G(C^n)$, we have $G(D)\subset G_S(n)$.

Now by Lemma \ref{socole}, we have
\begin{align*}
D &\subset \bigoplus_{i=0}^{nN}\bigoplus_{h\in G(D)}R(i)h\\
&=\bigoplus_{h\in G(D)}V_{nN}h\subset \bigoplus_{h\in G_S(n)}V_{nN}h.
\end{align*}
As a consequence, 
\[\dim_k C^n=\dim_kD\le \dim_k V_{nN}\cdot g_S(n).\]
Therefore, 
\begin{align*}
\varlimsup_{n\rightarrow \infty}\log_n \dim_k C^n&\le\varlimsup_{n\rightarrow \infty}\log_n  \dim_k V_{nN}\cdot g_S(n)\\
&\le\varlimsup_{n\rightarrow \infty}\log_n  \dim_k V_{nN}+ \varlimsup_{n\rightarrow \infty}\log_ng_S(n)\\
&\le \varlimsup_{n\rightarrow \infty}\log_n  \dim_k V_{n}+ \varlimsup_{n\rightarrow \infty}\log_ng_S(n)\\
&\le \gamma+ \GK kG.
\end{align*}
This proves $(\ref{ineq})$.

When $R$ is finitely generated, by \cite[Proposition 6.6]{KL}, $\GK R=\varlimsup\limits_{n\rightarrow \infty}\log_n \dim_k  V_n=\gamma$. Combining this fact with $(\ref{ineq})$, we have $(\ref{eq})$.
\end{proof}

\begin{remark} In Theorem \ref{equality}, if we further assume that $G$ is a finite group, then $\{H_n\}_{n\ge 0}$ is a finite filtration in the sense that $\dim_k H_n< \infty$ for any $n\ge 0$. This is true because $H_n/H_{n-1}\cong R(n)\otimes kG$ as $k$-spaces for all $n$. In this case, the result $\GK \gr H= \GK H$ follows from \cite[Proposition 6.6]{KL}.
\end{remark}

\begin{remark} It is easy to check that $\gr H$ is finitely generated if and only if both $R$ and $kG$ are finitely generated. Hence if $\gr H$ is finitely generated, then $\GK R+ \GK kG=\GK \gr H=\GK H$.
\end{remark}
If $H$ is a finitely generated pointed Hopf algebra, then $G$ is a finitely generated group by Corollary \ref{approx}. However, the finite generation of $H$ does not imply that $\dim_kR(1)<\infty$, as shown in the following example. 

\begin{example}
Let the base field $k$ be $\mathbb{F}_p$, where $p$ is a prime number. Let $H=k[x]$. Then $H$ is a connected Hopf algebra where $x$ is primitive. It is well known that $\gr H\cong k[x_1, x_2, \cdots]/(x^p_1, x^p_2, \cdots)$ with $x_i$ being primitive. As a consequence, $\GK \gr H=0$ since every finitely generated subalgebra of $\gr H$ is finite-dimensional. On the other hand, $\GK H=1$. 
\end{example}
The above example relies heavily on the assumption that the base field $k$ has characteristic $p$. In fact, based on known examples, it is conjectured that if the base field is of characteristic $0$, and $H$ is finitely generated with finite GK-dimension, then $R(1)$ is always finite-dimensional. Some partial results are discussed in \cite[Section 3]{WZZ2}.

We conclude this section with a straightforward corollary. For the definitions and basic properties of Yetter-Drinfeld modules and Nichols algebras, a good reference is \cite{AS}.
\begin{corollary}
Let $H$ be a pointed Hopf algebra with group-like elements $G$. If $\gr H\cong \mathcal{B}(V)\#kG$, where $V$ is a finite-dimensional left Yetter-Drinfeld module over $G$ and $\mathcal{B}(V)$ is the Nichols algebra of $V$, then 
\[\GK \mathcal{B}(V)+ \GK kG=\GK \gr H= \GK H.\]
\end{corollary}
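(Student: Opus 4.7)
The plan is to read this off directly from Theorem \ref{equality}, so the main task is to verify its hypotheses in the present setting. First I would identify the ingredients in Theorem \ref{equality} with those in the hypothesis: the canonical Hopf projection $\psi:\gr H\to H_0$ coincides, under the isomorphism $\gr H\cong \mathcal{B}(V)\#kG$, with the projection onto the $kG$-factor (so in particular $H_0=kG$), and the algebra of coinvariants $R=(\gr H)^{co\psi}$ is identified with $\mathcal{B}(V)$. This identification is immediate from the smash product description, since the coinvariants of a smash product $\mathcal{B}(V)\#kG$ under projection onto $kG$ are precisely $\mathcal{B}(V)\#1\cong \mathcal{B}(V)$.

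Next I would verify the finiteness hypotheses. Since the Nichols algebra $\mathcal{B}(V)$ is by construction the graded connected braided Hopf algebra generated by $V$ placed in degree one, we have $\mathcal{B}(V)(1)=V$, which is finite-dimensional by assumption, and moreover $\mathcal{B}(V)$ is generated as an algebra by $V$, hence is finitely generated. Both clauses of Theorem \ref{equality} are therefore satisfied: $\dim_k R(1)<\infty$ and $R$ is a finitely generated algebra.

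Finally, applying the equality $(\ref{eq})$ of Theorem \ref{equality} with $R=\mathcal{B}(V)$ and $H_0=kG$ yields
\[\GK \mathcal{B}(V)+\GK kG=\GK \gr H=\GK H,\]
which is the desired conclusion. There is essentially no obstacle here; the only point requiring a sentence of justification is the identification of $R$ with $\mathcal{B}(V)$, and the observation that a Nichols algebra of a finite-dimensional Yetter-Drinfeld module automatically satisfies the finite generation hypothesis needed to invoke $(\ref{eq})$ rather than merely the inequality $(\ref{ineq})$.
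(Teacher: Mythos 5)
Your proposal is correct and follows exactly the route the paper intends (the paper states this as a "straightforward corollary" of Theorem \ref{equality} without writing out a proof): identify $R$ with $\mathcal{B}(V)$ via the smash-product decomposition, observe that $\mathcal{B}(V)$ is generated by the finite-dimensional space $V=\mathcal{B}(V)(1)$ so both hypotheses of Theorem \ref{equality} hold, and invoke equality $(\ref{eq})$.
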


\section{Connected Hopf algebras}
This section is primarily devoted to the study of connected Hopf algebras. Let $H$ be a connected Hopf algebra. Then its associated graded Hopf algebra $\gr H$ with respect to the coradical filtration is also connected. Moreover, the natural grading on $\gr H$ makes it into a coradically graded Hopf algebra as mentioned in Section \ref{pre}. In fact, we are able to show that if the base field is algebraically closed of characteristic $0$ and $\GK H<\infty$, then $\GK H$ must be a non-negative integer $\ell$ and $\gr H$ is isomorphic to the polynomial ring in $\ell$ variables as algebras (see Proposition \ref{polynomial} and Theorem \ref{integer}). As a consequence, we derive some ring-theoretic properties of such Hopf algebras. For instance, we show that they are always domains, which reproves an unpublished result by Le Bruyn.

By \cite[Lemma 5.2.10]{Mont}, a connected bialgebra is automatically a connected Hopf algebra. Furthermore, we have the following lemma.
\begin{lemma}\label{sub}
Let $H$ be a connected Hopf algebra and $K$ a sub-bialgebra of $H$. Then $K$ is a Hopf subalgebra of $H$. 
\end{lemma}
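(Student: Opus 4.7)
The plan is to reduce to the observation that any bialgebra map between Hopf algebras automatically intertwines the antipodes; once this is in hand, applying it to the inclusion $K \hookrightarrow H$ forces $S(K) \subseteq K$.

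First I would verify that $K$ is itself a connected bialgebra, so that the cited \cite[Lemma 5.2.10]{Mont} applies to give $K$ its own antipode. Every simple subcoalgebra of $K$ is a simple subcoalgebra of $H$, hence is contained in the coradical $H_0 = k\cdot 1$. Thus the coradical $K_0$ of $K$ is contained in $k\cdot 1$, and since $k\cdot 1 \subseteq K$ is itself a simple subcoalgebra of $K$ we conclude $K_0 = k$. So $K$ is a connected bialgebra, and by \cite[Lemma 5.2.10]{Mont} it carries a (unique) antipode $S_K : K \to K$ making it a Hopf algebra.

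Next I would invoke the standard fact: if $f : A \to B$ is a bialgebra map and both $A$ and $B$ are Hopf algebras with antipodes $S_A, S_B$, then $f \circ S_A = S_B \circ f$. The proof is a convolution argument in $\Hom(A,B)$. One checks that both $f \circ S_A$ and $S_B \circ f$ are two-sided convolution inverses of $f$: indeed, for $a \in A$,
\[
\bigl((f \circ S_A) * f\bigr)(a) = \sum f(S_A(a_1))\, f(a_2) = f\!\left(\sum S_A(a_1)a_2\right) = f(\e_A(a)1_A) = \e_B(f(a))\,1_B,
\]
and similarly on the other side, and the same computation works with $S_B \circ f$. Since convolution inverses in $\Hom(A,B)$ are unique, $f\circ S_A = S_B\circ f$.

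I would then apply this to the bialgebra inclusion $i : K \hookrightarrow H$, whose codomain is now known to be a Hopf algebra (with antipode $S$) and whose domain is a Hopf algebra (with antipode $S_K$). The intertwining identity gives $S(k) = S(i(k)) = i(S_K(k)) = S_K(k) \in K$ for every $k \in K$, so $S(K) \subseteq K$ and $K$ is a Hopf subalgebra of $H$. There is no serious obstacle here; the only subtle point worth stating carefully is the reduction establishing that $K$ is connected and hence admits its own antipode, after which the uniqueness-of-convolution-inverse argument is routine.
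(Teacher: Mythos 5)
Your proof is correct, but it takes a genuinely different route from the paper's. The paper argues directly by induction on the coradical filtration of $K$: for $c\in K_n$ it writes $\Delta(c)=1\otimes c+c\otimes 1+\sum a_i\otimes b_i$ with $a_i,b_i\in K_{n-1}$ (using \cite[Lemma 5.3.2]{Mont}), then uses $S*\mathrm{Id}=\e$ to solve $S(c)=\e(c)-c-\sum a_iS(b_i)$, which lies in $K$ by the induction hypothesis. You instead first observe that $K$ is itself connected (so \cite[Lemma 5.2.10]{Mont}, which the paper quotes immediately before the lemma, equips $K$ with its own antipode $S_K$) and then invoke the standard uniqueness-of-convolution-inverse argument to show the inclusion $K\hookrightarrow H$ intertwines $S_K$ with $S$. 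Both proofs ultimately rest on the connectedness of $K$ — the paper uses it implicitly when it asserts $K_0=k\cdot 1$ and applies Lemma 5.3.2, whereas you make it explicit. Your argument is more conceptual and shows slightly more (that $S|_K$ is the unique antipode of $K$), at the cost of importing the general intertwining fact; the paper's induction is more elementary and self-contained, essentially reconstructing the antipode recursively on the filtration. Both are complete and correct.
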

\begin{proof}
Let $S$ be the antipode of $H$. We need to show that $S(K)\subset K$. It suffices to show that $S(K_n)\subset K$ for any $n\ge 0$. When $n=0$, the statement is true since $K_0$ is spanned by the unit $1$. For any $n\ge 1$ and $c\in K_n$, by \cite[Lemma 5.3.2]{Mont}, $\Delta(c)=1\otimes c+c\otimes 1+ \sum a_i\otimes b_i$, where $a_i, b_i\in K_{n-1}$. 
Since $S$ is the convolution inverse of the identity map, we have $S(c)+c+\sum a_iS(b_i)=\e(c)$.
By induction hypothesis, $S(c)\in K$. This completes the proof.
\end{proof}

The following technical lemma will be used frequently in the rest of the paper.
\begin{lemma}\label{regular}
Let $f: A \rightarrow B$ be a surjective algebra map. If $A$ is a Noetherian prime algebra and $\GK A< \GK B+1<\infty$, then $f$ is an isomorphism.
\end{lemma}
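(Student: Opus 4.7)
The plan is to argue by contradiction. Set $I := \ker f$ and assume $I \neq 0$; the goal is then to show $\GK B \le \GK A - 1$, which contradicts the numerical hypothesis $\GK A < \GK B + 1$. Since $A$ is a prime Noetherian algebra, Goldie's theorem guarantees that every nonzero two-sided ideal of $A$ contains a regular element, so I can pick a regular element $c \in I$; note that the left ideal $Ac$ (and the two-sided ideal $AcA$) is contained in $I$.

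The heart of the argument is the standard ``drop-by-one'' inequality for Gelfand--Kirillov dimension: for a Noetherian prime algebra $A$ of finite GK-dimension and a regular element $c \in A$, one has $\GK(A/AcA) \le \GK A - 1$. This is the conclusion of Proposition~3.15 in Krause--Lenagan (stated there for normal regular elements) together with the finitely partitive property of Noetherian prime algebras of finite GK-dimension developed in their Chapter~5. Once this is granted, the short-exact-sequence inclusion $AcA \subseteq I$ gives a surjection $A/AcA \twoheadrightarrow B$ of left $A$-modules, hence
\[
\GK B \;\le\; \GK (A/AcA) \;\le\; \GK A - 1,
\]
which contradicts $\GK A < \GK B + 1$ and forces $I = 0$, i.e., $f$ is injective. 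Surjectivity is given by hypothesis, so $f$ is an isomorphism.

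The main technical obstacle will be the invocation of the drop-by-one inequality, since the regular element $c$ produced by Goldie's theorem is not a priori normal, whereas the cleanest statement in Krause--Lenagan assumes normality. I would handle this either by passing to the two-sided ideal $AcA \subseteq I$ and applying the finitely partitive property to guarantee an integer-sized drop in the Noetherian prime setting, or by exploiting the simple Artinian Goldie quotient ring of $A$ to reduce to the normal case. Apart from this point, the remainder of the argument is a formal consequence of $f$ being a surjective algebra map and of the strict numerical hypothesis $\GK A - \GK B < 1$.
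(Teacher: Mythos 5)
Your proposal is correct and follows essentially the same route as the paper: assume $\ker f\neq 0$, invoke Goldie's theorem for the Noetherian prime algebra $A$ to find a regular element in the kernel, and apply \cite[Proposition 3.15]{KL} to get $\GK B+1\le \GK A$, contradicting the hypothesis. Your worry about normality is unnecessary, since that proposition applies to any (right) ideal containing a regular element, which is exactly how the paper uses it.
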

\begin{proof}
We only have to show that $I:=\ker f$ is zero. If not, then by Goldie's theorem, the ideal $I$ contains a regular element. Now by \cite[Proposition 3.15]{KL}, $\GK B+1\le \GK A$. But this is a contradiction.
\end{proof}

\begin{lemma}\label{commutative}
Let $K=\bigoplus_{n=0}^{\infty}K(n)$ be a graded Hopf algebra with $K(0)=k$. Then the following statements are true.
\begin{itemize}
\item[\textup{(I)}] If $K$ is generated in degree one, then $K$ is cocommutative;
\item[\textup{(II)}] If $K$ is coradically graded, then $K$ is commutative.
\end{itemize}
\end{lemma}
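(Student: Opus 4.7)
The plan is as follows. For part (I), the key observation is that $K(1) \subset P(K)$: since the comultiplication $\Delta$ preserves the grading and $K(0)=k$, for $x \in K(1)$ we have $\Delta(x) \in K(0) \otimes K(1) + K(1) \otimes K(0)$; applying the counit $\e$ (which vanishes on $K(n)$ for $n \ge 1$) to either tensor factor forces $\Delta(x) = x \otimes 1 + 1 \otimes x$. Thus elements of $K(1)$ are primitive, and in particular have cocommutative comultiplication. The flip map $\tau \colon K \otimes K \to K \otimes K$ is an algebra homomorphism for the tensor product algebra structure, so $\tau \circ \Delta$ is an algebra map agreeing with $\Delta$ on the generating set $K(1)$; hence $\tau \circ \Delta = \Delta$ on all of $K$, proving $K$ is cocommutative.

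For part (II), I first upgrade the inclusion $K(1) \subset P(K)$ to an equality $K(1) = P(K)$: for a primitive $x$, coradical gradedness gives $x \in K_1 = K(0) \oplus K(1)$, and writing $x = c + x_1$ with $c \in k$, $x_1 \in K(1)$ and then comparing with $\Delta(x) = x \otimes 1 + 1 \otimes x$ forces $c = 0$. Then I would prove by induction on $m+n$ that $[x,y] := xy - yx = 0$ for all homogeneous $x \in K(m)$, $y \in K(n)$; the base case $m+n \le 1$ is immediate since one of $x, y$ is then a scalar. For the inductive step, write $\bar\Delta(z) := \Delta(z) - z \otimes 1 - 1 \otimes z$ and expand $\Delta(xy) = \Delta(x)\Delta(y)$ and $\Delta(yx) = \Delta(y)\Delta(x)$. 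The ``diagonal'' contributions combine to give $[x,y] \otimes 1 + 1 \otimes [x,y]$, so $\bar\Delta([x,y])$ is a sum of eight mixed terms plus $\bar\Delta(x)\bar\Delta(y) - \bar\Delta(y)\bar\Delta(x)$. The mixed terms group into expressions such as $\sum [x, y_{(1)}] \otimes y_{(2)}$ and $\sum x_{(1)} \otimes [x_{(2)}, y]$ in which one slot of each commutator has strictly smaller degree, so the induction hypothesis annihilates them. For the final difference, induction applied at the intermediate degrees of the Sweedler components gives $x_{(i)} y_{(j)} = y_{(j)} x_{(i)}$ termwise, so the two expressions cancel. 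Hence $[x,y]$ is primitive, and since $[x,y] \in K(m+n)$ with $m+n \ge 2$, we conclude $[x,y] \in P(K) \cap K(m+n) = K(1) \cap K(m+n) = 0$.

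The main obstacle is the bookkeeping in the inductive step, in particular controlling the $\bar\Delta(x)\bar\Delta(y) - \bar\Delta(y)\bar\Delta(x)$ difference; the right way to see its vanishing is to invoke the induction hypothesis separately on each pair $(x_{(i)}, y_{(j)})$ of Sweedler components, whose total degrees are strictly less than $m+n$. Once this calculation is laid out, everything else reduces to the structural facts $K(1) \subset P(K)$ in general (used in (I)) and $P(K) = K(1)$ under the coradically graded hypothesis (used in (II)).
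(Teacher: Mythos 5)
Your argument is correct, and for part (II) it takes a genuinely different route from the paper. For (I) you and the paper do essentially the same thing: observe that $K(1)\subset P(K)$ and that the two algebra maps $\Delta$ and $\tau\circ\Delta$ agree on a generating set, hence everywhere. For (II), by contrast, the paper first reduces to the case where $K$ is finitely generated (hence locally finite) via Remark \ref{gradedaffine}, passes to the graded dual $S=\bigoplus_{n}K(n)^{*}$, invokes \cite[Lemma 5.5]{AS2} to see that $S$ is generated in degree one, applies part (I) to conclude that $S$ is cocommutative, and dualizes back. Your proof works directly in $K$: the containment $P(K)\subset K_1=K(0)\oplus K(1)$ forced by coradical gradedness, together with the inductive computation showing that $\overline{\Delta}([x,y])$ vanishes modulo commutators of strictly smaller total degree, gives $[x,y]\in P(K)\cap K(m+n)=0$ for $m+n\ge 2$. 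The bookkeeping you describe is organized correctly: the eight mixed terms collapse into four sums of commutators each involving a Sweedler component of degree $<n$ (or $<m$), and $\overline{\Delta}(x)\overline{\Delta}(y)-\overline{\Delta}(y)\overline{\Delta}(x)$ dies because each pair of components has total degree at most $m+n-2$. Your route is self-contained and avoids both the reduction to the locally finite case (which the paper needs in order to take graded duals) and the external citation; what the paper's route buys is brevity and the conceptual point that ``coradically graded'' and ``generated in degree one'' are dual conditions.
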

\begin{proof}
It is easy to check that if a Hopf algebra is generated by elements $x$ such that $\Delta(x)=\tau\Delta(x)$, where $\tau$ is the twisting map, then the Hopf algebra is cocommutative. Since $K(1)$ is spanned by primitive elements, the statement $\textup{(I)}$ is true.

For the second statement, 
by Remark \ref{gradedaffine}, we can assume, without loss of generality, that $K$ is finitely generated and thus locally finite. Let $S=\bigoplus_{n=0}^{\infty}K(n)^*$ be the graded dual of $K$. Then $S$ is also a graded Hopf algebra with $S(0)=k$. By \cite[Lemma 5.5]{AS2}, $S$ is generated in degree one and thus cocommutative by $\textup{(I)}$. Hence $K$ is commutative.
\end{proof}
Notice that for any connected Hopf algebra $H$, $\gr H$ is connected coradically graded. Hence the following proposition is clear.
\begin{proposition}
Let $H$ be a connected Hopf algebra. Then $\gr H$ is commutative.
\end{proposition}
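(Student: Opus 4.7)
The plan is to derive this proposition as an immediate corollary of Lemma \ref{commutative}(II). That lemma requires two hypotheses on a graded Hopf algebra $K = \bigoplus_{n \ge 0} K(n)$: that $K(0) = k$ and that $K$ is coradically graded. First I would verify the first hypothesis: because $H$ is connected, $H_0 = k \cdot 1$, and by construction the degree-zero component of the associated graded coalgebra with respect to the coradical filtration is precisely $H_0$, which is one-dimensional. So $(\gr H)(0) = k$.

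Next I would invoke Remark \ref{coradically}, which establishes that for any pointed Hopf algebra (connected being a special case), the associated graded Hopf algebra $\gr H$ is coradically graded. With both hypotheses of Lemma \ref{commutative}(II) verified, applying that lemma yields commutativity of $\gr H$ at once. There is no real obstacle here, since the substantive work has been carried out already in Lemma \ref{commutative}: part (II) passes to the locally finite graded dual via Remark \ref{gradedaffine} and Lemma 5.5 of \cite{AS2} to conclude that the dual is generated in degree one, and then part (I) applies to the dual (whose degree-one piece is primitive) to give cocommutativity, which dualizes back to commutativity of the original. The current proposition simply records the specialization of this machinery to the connected case.
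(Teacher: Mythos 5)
Your proposal is correct and matches the paper's own argument: the paper likewise observes that $\gr H$ is a connected coradically graded Hopf algebra (via Remark \ref{coradically} and $H_0=k$) and then applies Lemma \ref{commutative}(II). No issues.
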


In fact, if the base field is algebraically closed of characteristic $0$, we can say more about the algebra structure of a connected coradically graded Hopf algebra.

\begin{proposition}\label{polynomial}
Let $K=\bigoplus_{n=0}^{\infty}K(n)$ be a coradically graded Hopf algebra with $K(0)=k$ and assume that the base field $k$ is algebraically closed of characteristic $0$. If $K$ is finitely generated, then $K$ is isomorphic to the polynomial ring in $\ell$ variables for some $\ell\ge0$ as algebras.

\end{proposition}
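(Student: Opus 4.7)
The plan is geometric. By Lemma~\ref{commutative}(II), $K$ is commutative, and since every commutative Hopf algebra over a field of characteristic zero is reduced (a theorem of Cartier), the finitely generated commutative $k$-algebra $K$ is the coordinate ring of a smooth affine algebraic group $G := \operatorname{Spec}(K)$ over $k$. The counit $\e$ picks out a distinguished point $e \in G$, and $K^+ = \bigoplus_{n \geq 1} K(n)$ is the maximal ideal at $e$.

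The grading on $K$ corresponds to an algebraic action of $\mathbb{G}_m$ on $G$ by Hopf algebra automorphisms, hence by algebraic group automorphisms. Because $K(0) = k$, the only $\mathbb{G}_m$-invariant functions are scalars, so the fixed-point subscheme is exactly the reduced point $\{e\}$. Since all weights on $K$ are non-negative, every orbit map $\mathbb{G}_m \to G$, $t \mapsto t \cdot x$, extends uniquely to $\mathbb{A}^1 \to G$ sending $0$ to $e$; this shows $G$ is connected (indeed irreducible), and it forces the induced $\mathbb{G}_m$-action on $\operatorname{Lie}(G) = (K^+/(K^+)^2)^{\ast}$ to have all weights strictly negative.

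I would next rule out non-trivial tori in $G$. If $T \subseteq G$ were a non-trivial subtorus, the induced $\mathbb{G}_m$-action on $T$ by group automorphisms would have to factor through $\operatorname{Aut}(T)$, which is discrete by the rigidity of tori. Hence the induced action on $\operatorname{Lie}(T) \subseteq \operatorname{Lie}(G)$ would be trivial, i.e., weight zero, contradicting the previous paragraph. Thus $G$ is a connected affine algebraic group over an algebraically closed field of characteristic $0$ with no non-trivial torus, which means $G$ is unipotent.

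Finally, a connected unipotent algebraic group of dimension $\ell$ over a field of characteristic $0$ is isomorphic as an algebraic variety to $\mathbb{A}^\ell$, via the exponential map $\exp : \operatorname{Lie}(G) \to G$. Taking global sections, $K \cong k[x_1, \ldots, x_\ell]$ as algebras, proving the claim. The main obstacle in this plan is the reduction to the unipotent case, which is where characteristic $0$ and algebraic closedness are crucially used, both for Cartier's reducedness theorem and for the rigidity-of-tori argument; once $G$ is known to be connected unipotent in characteristic $0$, the conclusion is standard.
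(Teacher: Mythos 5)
Your argument is correct in outline but takes a genuinely different, and more geometric, route than the paper. The paper's proof is two lines: after commutativity is established via Lemma \ref{commutative}(II), it notes that $K\cong\mathcal{O}(\Gamma)$ for an affine algebraic group $\Gamma$, hence has finite global dimension (smoothness in characteristic $0$), and then cites \cite[III.2.5]{NO}, which says that a connected graded commutative algebra of finite global dimension is a polynomial ring; the grading is used only at the very end, to upgrade ``regular'' to ``polynomial''. You instead prove the stronger geometric statement that $\Gamma=\operatorname{Spec}(K)$ is connected and unipotent and then invoke the exponential isomorphism. Your route is longer but extracts more structure -- it identifies $K$ as the coordinate ring of a unipotent group, which is essentially the geometry underlying the later use of \cite[Theorem 5.2]{T1} and \cite{H} in Lemma \ref{GKplus1} -- whereas the paper's route is a near-immediate consequence of a standard commutative-algebra fact.

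There is one step in your proof that needs repair: in ruling out tori you speak of ``the induced $\mathbb{G}_m$-action on $T$,'' but an action of $\mathbb{G}_m$ on $\Gamma$ by group automorphisms has no reason to preserve a given subtorus $T$ (automorphisms permute the maximal tori), so $\operatorname{Lie}(T)$ need not be a $\mathbb{G}_m$-submodule of $\operatorname{Lie}(\Gamma)$ and the weight-zero contradiction does not yet follow. Two ways to fix this: (a) the unipotent radical $R_u(\Gamma)$ is a characteristic subgroup, so the action descends to the connected reductive quotient $Q=\Gamma/R_u(\Gamma)$; rigidity then forces the action on $Q$ to be by inner automorphisms, so $\operatorname{Lie}(Q)^{\mathbb{G}_m}$ contains the Lie algebra of a maximal torus of $Q$, and since $\operatorname{Lie}(\Gamma)\twoheadrightarrow\operatorname{Lie}(Q)$ is $\mathbb{G}_m$-equivariant this produces weight-zero vectors in $\operatorname{Lie}(\Gamma)$ unless $Q=1$. (b) More directly, and using the coradically graded hypothesis in full rather than only the positivity of the grading: the coradical of $K$ is $K_0=K(0)=k$, so every simple $K$-comodule, i.e.\ every irreducible rational representation of $\Gamma$, is trivial, which is a standard characterization of unipotence and makes the whole $\mathbb{G}_m$-contraction discussion unnecessary. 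With either repair the remainder of your argument goes through.
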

\begin{proof}
Since $K$ is finitely generated commutative, $K\cong \mathcal{O}(\Gamma)$, the coordinate ring of some algebraic group $\Gamma$ over $k$. Hence $K$ has finite global dimension. Now the result follows from \cite[III.2.5]{NO}.
\end{proof}

The previous proposition leads to the following theorem, which is a result by Le Bruyn (unpublished).
\begin{theorem}\label{domain}
Assume that the base field $k$ is algebraically closed of characteristic $0$. Let $H$ be a connected Hopf algebra. Then $H$ is a domain.
\end{theorem}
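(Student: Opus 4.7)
The plan is to prove the theorem in two stages: first show that the associated graded Hopf algebra $\gr H$ is a domain, then lift the conclusion to $H$ by the standard filtered-to-graded argument. Because $H$ is connected, the coradical $H_0=k$ is a subalgebra and the coradical filtration $\{H_n\}_{n\ge 0}$ is both an algebra and a coalgebra filtration, so $\gr H$ is a graded Hopf algebra. The lifting step is then routine: if nonzero $a,b\in H$ satisfied $ab=0$, then writing $\nu(a)=\min\{n\mid a\in H_n\}$ and $\ell(a)\in(\gr H)(\nu(a))$ for the nonzero leading term, the leading terms $\ell(a),\ell(b)\in\gr H$ would both be nonzero but multiply to zero in $\gr H$, contradicting $\gr H$ being a domain.

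For the main step, showing that $\gr H$ is a domain, I would use the fact (from the proposition just preceding) that $\gr H$ is commutative, together with Remark \ref{coradically} which says $\gr H$ is connected and coradically graded. The key tool is Remark \ref{gradedaffine}: since $(\gr H)(0)=k$ is spanned by the unique group-like element $1$, every finite-dimensional subspace of $\gr H$ sits inside some finitely generated graded Hopf subalgebra $K\subset\gr H$. After verifying that such a $K$ inherits the coradically graded property from $\gr H$, Proposition \ref{polynomial} applies and identifies $K$ with a polynomial ring $k[x_1,\ldots,x_\ell]$, which is a domain.

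Any two nonzero elements $x,y\in\gr H$ can then be enclosed in a single such $K$, where $xy\ne 0$; hence $\gr H$ is a domain, and the filtered-to-graded argument of the first paragraph finishes the proof.

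I expect the only nontrivial point to be the verification that the subalgebra $K$ produced by Remark \ref{gradedaffine} is itself coradically graded, which is the hypothesis needed for Proposition \ref{polynomial}. The verification is short: $K(0)=k=K_0$ is automatic, elements of $K(1)$ are primitive and hence lie in $K_1$, and conversely the coradically graded structure of $\gr H$ forces $K_1\subset K\cap((\gr H)(0)\oplus(\gr H)(1))=K(0)\oplus K(1)$, so $K_1=K(0)\oplus K(1)$. By the Cohen--Montgomery lemma recalled in Section \ref{pre}, this gives $K_n=\bigoplus_{i\le n}K(i)$ for every $n$, so $K$ is coradically graded. Once that is in place, the theorem follows by assembling previously established results in the paper.
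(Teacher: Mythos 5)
Your proof is correct and follows essentially the same route as the paper: reduce to showing $\gr H$ is a domain, enclose any two nonzero elements in a finitely generated graded Hopf subalgebra via Remark \ref{gradedaffine}, and invoke Proposition \ref{polynomial} to identify that subalgebra with a polynomial ring. Your explicit check that the subalgebra $K$ inherits the coradically graded property (needed to apply Proposition \ref{polynomial}) is a detail the paper leaves implicit, and your verification of it is sound.
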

\begin{proof}
We only need to show that $\gr H$ is a domain. By Remark \ref{gradedaffine}, every finite subset of $\gr H$ is contained in a finitely generated graded Hopf subalgebra of $\gr H$. By Proposition \ref{polynomial}, such subalgebras are domains. The result then follows.
\end{proof}

\begin{remark}
In Theorem \ref{domain}, the statement fails if the base field is of characteristic $p$. For example, let $k=\mathbb{F}_p$ and $H=k[x]/(x^p)$. It is well known that $H$ has a unique connected Hopf algebra structure under which $x$ is primitive. Obviously, $H$ is not a domain.
\end{remark}
\begin{lemma}\label{GKplus1}
Assume that the base field $k$ is algebraically closed of characteristic $0$. Let $K$ be a connected coradically graded Hopf algebra and $L$ a finitely generated graded Hopf subalgebra of $K$. If $L\neq K$, then $\GK K\ge \GK L+1$.
\end{lemma}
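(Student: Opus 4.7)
The plan is to first reduce to a finitely generated ambient Hopf algebra and then exploit the algebraic-geometric structure that Proposition \ref{polynomial} imposes. Since $L \neq K$, there exists some $y \in K \setminus L$. Adjoining $y$ to a finite generating set of $L$ produces a finite-dimensional graded subspace of $K$, which by Remark \ref{gradedaffine} is contained in a finitely generated graded Hopf subalgebra $M \subseteq K$; in particular $L \subsetneq M$, and both $L$ and $M$ are connected coradically graded (the grading on $M$ inherited from $K$ satisfies $M_n = M \cap K_n = \bigoplus_{i \le n} M(i)$, and likewise for $L$). Because $\GK K \geq \GK M$, it suffices to show $\GK M \geq \GK L + 1$.

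Applying Proposition \ref{polynomial} to both $L$ and $M$, each is a polynomial ring in $\ell := \GK L$ and $m := \GK M$ variables respectively, hence each is a finitely generated commutative connected Hopf algebra over $k$ and thus the coordinate ring of a connected affine algebraic group; write $\Gamma_L = \operatorname{Spec} L$ and $\Gamma_M = \operatorname{Spec} M$, of dimensions $\ell$ and $m$ respectively. In fact both groups are unipotent, since in characteristic $0$ a connected algebraic group whose coordinate ring is a polynomial algebra is exactly a unipotent group (via the exponential map, which identifies the group with its Lie algebra as a variety). The Hopf inclusion $L \hookrightarrow M$ is a map of finitely generated commutative Hopf algebras; by the classical faithful-flatness theorem (see, e.g., Waterhouse, \emph{Introduction to Affine Group Schemes}, Ch.~14), $M$ is faithfully flat over $L$, and dually this yields a surjective homomorphism $\pi : \Gamma_M \twoheadrightarrow \Gamma_L$ of algebraic groups with kernel $N$ a closed normal subgroup of $\Gamma_M$.

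Because $L \subsetneq M$, the map $\pi$ is not an isomorphism, so $N$ is nontrivial. In characteristic $0$, every closed subgroup of a connected unipotent group is itself connected unipotent (a finite unipotent group in characteristic $0$ is trivial, so the component group of $N$ must vanish), whence $\dim N \geq 1$. The dimension formula for a surjective homomorphism of algebraic groups then yields
\[\GK M = \dim \Gamma_M = \dim \Gamma_L + \dim N \geq \ell + 1,\]
which combined with $\GK K \geq \GK M$ completes the proof. The main obstacle is the faithful-flatness step, since that theorem is standard for commutative Hopf algebras but is not developed in the paper; the remaining pieces are direct applications of Proposition \ref{polynomial}, Remark \ref{gradedaffine}, and classical dimension theory for affine algebraic groups in characteristic $0$.
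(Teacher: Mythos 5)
Your argument is correct and, at its core, is the same as the paper's: both reduce to a finitely generated ambient Hopf algebra properly containing $L$, pass via $\operatorname{Spec}$ to an exact sequence of algebraic groups, and conclude with the dimension formula $\dim\Gamma_2=\dim\Gamma_1+\dim\Gamma_3$ from \cite[7.4 Proposition B]{H}. The differences lie in the two auxiliary steps, and in each case your version is a group-side rendering of the paper's Hopf-side one. For the reduction, the paper adjoins a single homogeneous element $y$ of minimal degree with $L(N)\neq K(N)$, notes that $\Delta(y)-1\otimes y-y\otimes1\in L\otimes L$, so that the subalgebra generated by $L$ and $y$ is a finitely generated graded sub-bialgebra and hence a Hopf subalgebra by Lemma \ref{sub}; your appeal to Remark \ref{gradedaffine} achieves the same reduction (just replace $y$ by a homogeneous component of $y$ not lying in $L$, which exists because $L$ is graded). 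For the positivity of the extra dimension, the paper invokes Takeuchi's correspondence \cite[Theorem 4.3]{T1} to see that $H=K/L^{+}K\neq k$ and hence $\GK H\ge 1$, whereas you argue that the kernel $N$ is a nontrivial closed subgroup of a unipotent group in characteristic $0$ and hence is connected of positive dimension; both are valid, and your reliance on faithful flatness of inclusions of commutative Hopf algebras matches the paper's reliance on \cite{T1}. The one point to tighten is your justification that $\Gamma_M$ is unipotent: the implication ``coordinate ring a polynomial algebra $\Rightarrow$ unipotent'' is true in characteristic $0$ but is itself a nontrivial theorem, and the exponential map only yields the converse direction. It is cleaner (and independent of Proposition \ref{polynomial}) to observe that $M$ is connected as a coalgebra, so its only group-like element is $1$ and every simple $M$-comodule is trivial, which is precisely the statement that the affine group scheme $\operatorname{Spec}M$ is unipotent.
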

\begin{proof}
Let $N$ be the smallest number such that $L(N)\neq K(N)$. Pick $y\in K(N)\setminus L(N)$. By the choice of $N$ we see that $\Delta(y)=1\otimes y+y\otimes 1+w$, where $w\in \bigoplus_{i=1}^{N-1}L(i)\otimes L(N-i)$. Hence the algebra $P$ generated by $L$ and $y$ is a finitely generated graded sub-bialgebra of $K$. By Lemma \ref{sub}, $P$ is a Hopf subalgebra of $K$. By replacing $K$ with $P$, we may assume that $K$ is also finitely generated. Now we have an exact sequence of Hopf algebras
\[0\rightarrow L\rightarrow K\rightarrow H\rightarrow 0,\]
in the sense that $L$ is a normal Hopf subalgebra of $K$ and $H=K/L^+K=K/KL^+$. Since $L\neq K$, the connected Hopf algebra $H$ is not isomorphic to $k$ by \cite[Theorem 4.3]{T1} and therefore $\GK H\ge1$. By taking the spectrum, we get an exact sequence of algebraic groups \cite[Theorem 5.2]{T1}
\[1\rightarrow \Gamma_1\rightarrow \Gamma_2\rightarrow \Gamma_3\rightarrow 1.\]
By \cite[7.4 Proposition B]{H}, $\dim \Gamma_2=\dim \Gamma_1+\dim \Gamma_3$, where $\dim$ represents the dimension of an affine variety. It is well known that for any affine variety $X$, $\dim X=\GK \mathcal{O}(X)$. Consequently,
\[\GK K=\GK L+\GK H.\]
Now the result follows since $\GK H\ge1$.
\end{proof}
In next section, we will lift the result to the ungraded case in Lemma \ref{GKcrit}.

\begin{lemma}\label{finiteGK}
Assume that the base field $k$ is algebraically closed of characteristic $0$. Let $K$ be a connected coradically graded Hopf algebra. Then $K$ has finite GK-dimension if and only if $K$ is finitely generated.
\end{lemma}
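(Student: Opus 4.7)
The plan is to prove this by establishing the nontrivial direction via a chain-of-subalgebras argument, using the integer-jump in GK-dimension provided by Lemma \ref{GKplus1}.

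The direction ``finitely generated implies finite GK-dimension'' is immediate from Proposition \ref{polynomial}: a finitely generated connected coradically graded Hopf algebra is isomorphic to a polynomial ring $k[x_1,\ldots,x_\ell]$, which has GK-dimension $\ell<\infty$.

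For the converse I would argue the contrapositive: assume $K$ is not finitely generated, and show $\GK K=\infty$. The strategy is to construct inductively a strictly ascending chain
\[L_1\subsetneq L_2\subsetneq L_3\subsetneq\cdots\]
of finitely generated graded Hopf subalgebras of $K$. Start with $L_1=k$. Given $L_i$ finitely generated and properly contained in $K$ (which holds since $K$ itself is not finitely generated), choose a finite generating set $v_1,\ldots,v_m$ of $L_i$ and a homogeneous element $x\in K\setminus L_i$ (such an $x$ exists because $L_i$ is a graded subspace, so some homogeneous component of any element in $K\setminus L_i$ lies outside $L_i$). By Remark \ref{gradedaffine}, the finite-dimensional graded subspace $kv_1+\cdots+kv_m+kx$ is contained in some finitely generated graded Hopf subalgebra $L_{i+1}$ of $K$. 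By construction $L_i\subset L_{i+1}$ as graded Hopf subalgebras, and $L_i\subsetneq L_{i+1}$ because $x\in L_{i+1}\setminus L_i$.

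Next I would apply Lemma \ref{GKplus1} at each step. Each $L_{i+1}$ is a graded Hopf subalgebra of the coradically graded $K$, hence is itself connected coradically graded: for any graded subcoalgebra $D$ of a connected coradically graded $C$, one checks $D_0=D\cap C_0=D\cap C(0)=D(0)$ and $D_1=D\cap C_1=D(0)\oplus D(1)$. Thus Lemma \ref{GKplus1} applies with ambient Hopf algebra $L_{i+1}$ and proper finitely generated graded Hopf subalgebra $L_i$, giving $\GK L_{i+1}\ge \GK L_i+1$. Iterating yields $\GK L_i\ge i-1$. Since subalgebras have no larger GK-dimension than the ambient algebra, $\GK K\ge\GK L_i\ge i-1$ for every $i$, so $\GK K=\infty$, contradicting the hypothesis.

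The main obstacle is really just the bookkeeping in the chain construction: one needs to ensure each $L_{i+1}$ (i) is genuinely coradically graded so that Lemma \ref{GKplus1} is applicable, and (ii) strictly contains $L_i$. Both points rely essentially on Remark \ref{gradedaffine}, which allows us to enlarge any finite graded subset to a finitely generated graded Hopf subalgebra, together with the elementary observation that graded Hopf subalgebras of coradically graded Hopf algebras inherit the coradically graded structure. Once these are in place, the chain produces GK-dimensions tending to infinity and the proof is complete.
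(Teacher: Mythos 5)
Your proof is correct and follows essentially the same route as the paper: an ascending chain of finitely generated graded Hopf subalgebras whose GK-dimensions increase by at least one at each step, via Lemma \ref{GKplus1}. The only difference is minor: the paper starts the chain at the subalgebra generated by $K(1)$ and must therefore treat $\dim_k K(1)=\infty$ as a separate case (embedding $U(\mathfrak{g})$ for $\mathfrak{g}=K(1)$), whereas your construction starting from $L_1=k$ and enlarging via Remark \ref{gradedaffine} absorbs that case into the chain argument.
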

\begin{proof}
If $K$ is finitely generated, then by Proposition \ref{polynomial}, $K$ has finite GK-dimension. Now assume that $K$ is not finitely generated. If $\dim_kK(1)=\infty$, then $K$ has a Hopf subalgebra isomorphic to $U(\mathfrak{g})$, where $\mathfrak{g}:=K(1)$ is an infinite-dimensional Lie algebra. Hence $\GK K=\infty$. Now assume that $\dim_k K(1)<\infty$. It suffices to show that there is a chain of Hopf subalgebras $K^{(1)}\subset K^{(2)}\subset\cdots$ such that $\GK K^{(i)}+1\le\GK K^{(i+1)}$. Let $K^{(1)}$ be the subalgebra generated by $K(1)$. Then $K^{(1)}$ is a finitely generated graded Hopf subalgebra of $K$. Since $K^{(1)}\subsetneq K$ by assumption, there is some homogeneous element $y\in K\setminus K^{(1)}$ such that $\Delta(y)=1\otimes y + y\otimes 1+w$ where $w\in (K^{(1)})^+\otimes (K^{(1)})^+$. Let $K^{(2)}$ be the subalgebra generated by $K^{(1)}$ and $y$. It is obvious that $K^{(2)}$ is again a finitely generated graded Hopf subalgebra. By Lemma \ref{GKplus1}, $\GK K^{(2)}\ge \GK K^{(1)}+1$. Now $K^{(2)}\subsetneq K$, so we can repeat the above process and get the desired chain of Hopf subalgebras. This completes the proof.
\end{proof}

Now we are able to deliver the following theorem.
\begin{theorem}\label{integer}
Assume that the base field $k$ is algebraically closed of characteristic $0$ and let $H$ be a connected Hopf algebra. Then the following statements are equivalent:
\begin{enumerate}
\item[\textup{(I)}] $\GK H< \infty$;
\item[\textup{(II)}] $\GK \gr H< \infty$;
\item[\textup{(III)}] $\gr H$ is finitely generated;
\item[\textup{(IV)}] $\gr H$ is isomorphic to the polynomial ring of $\ell$ variables for some $\ell\ge0$ as algebras.
\end{enumerate}
In this case, $\GK H=\GK \gr H$, which is a positive integer.
\end{theorem}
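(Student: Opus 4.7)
The plan is to close the loop of implications by drawing on the three main tools already established in this section, namely Proposition \ref{polynomial}, Lemma \ref{finiteGK}, and Theorem \ref{equality}, together with the fact that for a connected Hopf algebra $H$ the coradical satisfies $H_0=k$, so the Radford decomposition $\gr H\cong R\#H_0$ degenerates to $\gr H = R$, and the group $G$ is trivial. The first task is to recall from Remark \ref{coradically} that $\gr H$ is a connected coradically graded Hopf algebra, which brings it into the setting of both Proposition \ref{polynomial} and Lemma \ref{finiteGK}.

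Next I would verify the equivalences (II)$\Leftrightarrow$(III)$\Leftrightarrow$(IV) directly. The equivalence (II)$\Leftrightarrow$(III) is Lemma \ref{finiteGK} applied to $K=\gr H$. The implication (III)$\Rightarrow$(IV) is Proposition \ref{polynomial} applied to $K=\gr H$, and (IV)$\Rightarrow$(III) is obvious since a polynomial ring in finitely many variables is finitely generated. This handles the cluster of three conditions about $\gr H$.

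To close the loop back to (I), I would split into the two directions. For (I)$\Rightarrow$(II), observe that the coradical filtration $\{H_n\}$ of the Hopf algebra $H$ is actually an algebra filtration (because $H_mH_n\subset H_{m+n}$), so the associated graded algebra $\gr H$ satisfies $\GK \gr H\le \GK H$ by \cite[Lemma 6.5]{KL}; hence (I) forces (II). For the converse, given (III), the hypothesis of Theorem \ref{equality} is satisfied since $R=\gr H$ is finitely generated and in particular $\dim_k R(1)<\infty$. Theorem \ref{equality} then yields
\[\GK R+\GK kG=\GK \gr H=\GK H,\]
and since $G=\{1\}$ this collapses to $\GK H=\GK \gr H<\infty$, giving (II)$\Rightarrow$(I) along with the promised equality $\GK H=\GK \gr H$.

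Finally, for the last sentence of the theorem, I would note that under these equivalent conditions (IV) gives $\gr H\cong k[x_1,\ldots,x_\ell]$ for some $\ell\ge 0$, so $\GK H=\GK \gr H=\ell$ is a non-negative integer; if $\ell=0$, then $H_n=k$ for every $n$ and hence $H=k$, so outside this trivial case $\ell$ is a positive integer. I expect no serious obstacle here: every ingredient has been prepared in the previous sections, and the argument is essentially a bookkeeping exercise. The only point that needs a brief justification is that the coradical filtration of a Hopf algebra is an algebra filtration, so that \cite[Lemma 6.5]{KL} can be invoked to obtain $\GK \gr H\le \GK H$ in the connected setting.
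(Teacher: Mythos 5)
Your proposal is correct and follows essentially the same route as the paper: Lemma \ref{finiteGK} for (II)$\Leftrightarrow$(III), Proposition \ref{polynomial} for (III)$\Leftrightarrow$(IV), \cite[Lemma 6.5]{KL} for (I)$\Rightarrow$(II), and Theorem \ref{equality} (with $G$ trivial and $R=\gr H$) for the converse and for $\GK H=\GK\gr H$. Your explicit handling of the degenerate case $\ell=0$ (i.e.\ $H=k$) is in fact slightly more careful than the paper's closing sentence, which asserts a positive integer without excluding the trivial Hopf algebra.
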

\begin{proof}
If $\GK H=\infty$, we need to show that $\GK \gr H$ is also infinity. If not, by Lemma \ref{finiteGK}, $\gr H$ is finitely generated. Then by Theorem \ref{equality} (or \cite[Proposition 6.6]{KL}), $\GK H=\GK \gr H< \infty$, which is a contradiction. If $\GK H<\infty$, by \cite[Lemma 6.5]{KL}, $\GK \gr H\le \GK H<\infty$. Hence $\textup{(I)}$ and $\textup{(II)}$ are equivalent. The equivalence of $\textup{(II)}$ and $\textup{(III)}$ is just Lemma \ref{finiteGK}. The equivalence  of $\textup{(III)}$ and $\textup{(IV)}$ follows from Proposition \ref{polynomial}.

If one of the four conditions holds, then $\GK H=\GK \gr H$ by Theorem \ref{equality}. Moreover, in this case $\GK \gr H$ is a positive integer by $\textup{(IV)}$. This completes the proof.
\end{proof}

As a consequence of Theorem \ref{integer}, a connected Hopf algebra enjoys many nice ring-theoretical properties. A few of them are listed in the following corollary.

\begin{corollary}
Assume that the base field $k$ is algebraically closed of characteristic $0$ and let $H$ be a connected Hopf algebra of GK-dimension $\ell<\infty$. Then $H$ is 
\begin{itemize}
\item[\textup{(I)}] a noetherian domain of global dimension $\ell$ and Krull dimension $\le \ell$;
\item[\textup{(II)}] Auslander-regular;
\item[\textup{(III)}] GK-Cohen-Macaulay, i.e., for any non-zero finitely generated $H$-module $M$, 
\[j(M)+\GK M=\GK H,\]
where $j(M):=\min\{n \,|\,\Ext^n_H(M, H)\neq 0\}$.
\end{itemize}
\begin{proof}
By Theorem \ref{integer}, $\gr H$ is a noetherian domain of global dimension and Krull dimension $\ell$. Now by \cite[Lemma I.12.12, Theorem I.12.13]{BG} and \cite[Lemma 5.6, Corollary 6.18]{MR}, $H$ is a noetherian domain of global dimension and Krull dimension $\le \ell$. Moreover, by taking $M$ to be the trivial $H$-module $k$ in $(\textup{III})$, we have $j(k)=\ell$. This shows that the global dimension of $H$ is $\ell$.

Since $\gr H$ is noetherian, the filtration $\{H_n\}_{n\ge 0}$ on $H$ is Zariskian by \cite[2.10]{Bj}. Then the statement $\textup{(II)}$ follows from \cite[Theorem 3.9]{Bj}.

For the statement $\textup{(III)}$, we first choose a good filtration $\{M_n\}_{n\in\mathbb{Z}}$ of $M$ in the sense of \cite[Definition 5.1]{LO}. It then follows from \cite[Lemma 5.4]{LO} that $\gr M$ is a finitely generated $\gr H$-module. It is clear that $\gr H$ is GK-Cohen-Macaulay. 
Hence $j_{\gr H}(\gr M)+\GK \gr M=\GK \gr H$. As mentioned in the proof of \cite[Theorem 3.9]{Bj}, $j_{\gr H}(\gr M)=j(M)$. By Theorem \ref{integer}, $\GK \gr H=\GK H$ and $\gr H$ is a finitely generated algebra. It then follows from \cite[Proposition 6.6]{KL} that $\GK \gr M=\GK M$. This completes the proof.
\end{proof}
\end{corollary}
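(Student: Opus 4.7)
The plan is to exploit Theorem \ref{integer}, which identifies $\gr H$ with a polynomial ring in $\ell$ variables, and then transfer each of the listed properties from the associated graded ring to $H$ itself via the coradical filtration $\{H_n\}_{n\ge 0}$. This filtration is an algebra filtration (since the coradical filtration of a Hopf algebra is a coalgebra and algebra filtration), so standard filtered--graded lifting results apply. The point throughout will be that the commutative polynomial ring $\gr H\cong k[x_1,\ldots,x_\ell]$ is an extremely well-behaved ring, Auslander-regular, Cohen-Macaulay, noetherian, a domain of global and Krull dimension exactly $\ell$, and we just need machinery to pull these properties back to $H$.

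For (I), I would first note that $H$ is already known to be a domain by Theorem \ref{domain}. The fact that $\gr H$ is noetherian and has global dimension $\ell$ lifts to $H$ via \cite[Lemma I.12.12, Theorem I.12.13]{BG}; Krull dimension $\le\ell$ follows from \cite[Lemma 5.6, Corollary 6.18]{MR}. To pin down the global dimension exactly as $\ell$ (rather than merely $\le\ell$), I would invoke the GK-Cohen-Macaulay property established in (III) applied to the trivial module $M=k$: that would give $j(k)=\GK H-\GK k=\ell$, forcing $\gldim H\ge \ell$, and combined with the upper bound this yields equality.

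For (II), since $\gr H$ is noetherian, the filtration $\{H_n\}_{n\ge 0}$ is Zariskian in the sense of \cite[2.10]{Bj}. Auslander-regularity of $H$ then follows from \cite[Theorem 3.9]{Bj} applied to the fact that $\gr H$, being the polynomial ring, is Auslander-regular.

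The main work, and the step I expect to require the most care, is (III). For a finitely generated $H$-module $M$, I would first choose a good filtration $\{M_n\}_{n\in\mathbb{Z}}$ in the sense of \cite[Definition 5.1]{LO}; then \cite[Lemma 5.4]{LO} yields that $\gr M$ is a finitely generated $\gr H$-module. Since $\gr H$ is a polynomial ring it is GK-Cohen-Macaulay, so
\[j_{\gr H}(\gr M)+\GK_{\gr H}\gr M=\GK \gr H.\]
The Zariskian setting ensures $j(M)=j_{\gr H}(\gr M)$ (via the argument indicated in \cite[Theorem 3.9]{Bj}), and Theorem \ref{integer} together with \cite[Proposition 6.6]{KL} gives both $\GK \gr H=\GK H$ and $\GK \gr M=\GK M$. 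Assembling these equalities produces $j(M)+\GK M=\GK H$, as required. The delicate point here is guaranteeing that $j(M)$ computed over $H$ really equals $j_{\gr H}(\gr M)$ computed over the associated graded ring; this is precisely what the Zariskian/Auslander-regular framework of Bj\"ork provides, and is the linchpin that makes the whole argument go through.
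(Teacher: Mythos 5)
Your proposal is correct and follows essentially the same route as the paper: lift noetherianity, the domain property, and the bounds on global and Krull dimension from $\gr H$ via the filtered--graded machinery of Brown--Goodearl and McConnell--Robson, obtain Auslander-regularity from the Zariskian property and Bj\"ork's Theorem 3.9, and prove the GK-Cohen-Macaulay property by passing to a good filtration and comparing $j(M)$ with $j_{\gr H}(\gr M)$, then using $j(k)=\ell$ to pin down the global dimension exactly. The only cosmetic difference is that you cite Theorem \ref{domain} directly for the domain property, while the paper re-derives it from $\gr H$ being a domain.
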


\section{Connected Hopf algebras of GK-dimension three}
\textbf{Throughout this section, the base field $k$ is algebraically closed of characteristic zero.} We are going to classify all connected Hopf algebras of GK-dimension three. To begin with, we introduce two classes of Hopf algebras.

\begin{example}\label{typeA}
Let $A$ be the algebra generated by elements $X, Y, Z$ satisfying the following relations,
\begin{align*}
[X, Y]=0,\\
[Z, X]=\lambda_1 X+\alpha Y,\\
[Z, Y]=\lambda_2 Y,
\end{align*}
where $\alpha=0$ if $\lambda_1\neq \lambda_2$ and $\alpha=0$ or $1$ if $\lambda_1= \lambda_2$.
Then $A$ becomes a Hopf algebra via
\begin{align*}
\e(X)=0,\quad \Delta(X)=1\otimes X+ X\otimes 1,\quad S(X)=-X,\\
\e(Y)=0,\quad \Delta(Y)=1\otimes Y+ Y\otimes 1,\quad S(Y)=-Y,\\
\e(Z)=0,\quad \Delta(Z)=1\otimes Z+X\otimes Y+ Z\otimes 1,\quad S(Z)=-Z+XY.
\end{align*}
We denote this Hopf algebra by $A(\lambda_1, \lambda_2, \alpha)$.
\end{example}

\begin{example}\label{typeB}
 Let $B$ be the algebra generated by elements $X, Y, Z$ satisfying the following relations,
\begin{align*}
[X, Y]=Y,\\
[Z, X]=-Z+\lambda Y,\\
[Z, Y]=\frac{1}{2}Y^2,
\end{align*}
where $\lambda \in k$.
Then $B$ becomes a Hopf algebra via
\begin{align*}
\e(X)=0,\quad \Delta(X)=1\otimes X+ X\otimes 1,\quad S(X)=-X,\\
\e(Y)=0,\quad \Delta(Y)=1\otimes Y+ Y\otimes 1,\quad S(Y)=-Y,\\
\e(Z)=0,\quad \Delta(Z)=1\otimes Z+X\otimes Y+ Z\otimes 1,\quad S(Z)=-Z+XY.
\end{align*}
We denote this Hopf algebra by $B(\lambda)$.
\end{example}

\begin{proposition}
The algebras $A(\lambda_1, \lambda_2, \alpha)$ and $B(\lambda)$ are connected Hopf algebras of GK-dimension three.
\end{proposition}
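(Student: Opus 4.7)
The plan is to verify in turn that (a) the prescribed data give a Hopf algebra structure, (b) the GK-dimension is three, and (c) the algebra is connected. I treat $A(\lambda_1,\lambda_2,\alpha)$ and $B(\lambda)$ in parallel, writing $H$ for either one.

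For (a), I would confirm that $\Delta$, $\e$, and $S$ descend to well-defined (anti-)algebra maps on the quotient algebras by checking compatibility with each defining relation. Since $X$ and $Y$ are declared primitive, this amounts to applying $\Delta$ to each commutator. For example, in $A$,
\[
\Delta([Z,X]) = 1\otimes [Z,X] + X\otimes [Y,X] + [Z,X]\otimes 1 = \Delta(\lambda_1 X+\alpha Y),
\]
using $[X,Y]=0$. The remaining commutator relations and the antipode identities $S*\mathrm{id}=\mathrm{id}*S=\e$ follow from analogous direct computations on generators.

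For (b), I would realize each algebra as an iterated Ore extension of $k$. In the case of $A(\lambda_1,\lambda_2,\alpha)$, the relation $[X,Y]=0$ gives $A\cong k[X,Y][Z;\delta]$, where $\delta$ is the ordinary derivation of $k[X,Y]$ determined by $\delta(X)=\lambda_1 X+\alpha Y$ and $\delta(Y)=\lambda_2 Y$. For $B(\lambda)$, the subalgebra generated by $X,Y$ is the enveloping algebra $U(\mathfrak{b}_2)$ of the two-dimensional non-abelian Lie algebra, which can itself be written as the Ore extension $k[Y][X;Y\partial_Y]$; then $B\cong U(\mathfrak{b}_2)[Z;\sigma,\delta']$ with $\sigma(X)=X-1$, $\sigma(Y)=Y$, $\delta'(X)=\lambda Y$, $\delta'(Y)=\tfrac{1}{2}Y^2$. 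The nontrivial check is that $\delta'$ is a well-defined $\sigma$-derivation, i.e.\ that $\delta'(XY-YX-Y)=0$; a short calculation using $[X,Y^2]=2Y^2$ in $U(\mathfrak{b}_2)$ verifies this and pins down the coefficient $\tfrac{1}{2}$. Each Ore step over a finitely generated algebra adds one to GK-dimension, so $\GK A=\GK B=3$, with PBW basis $\{X^aY^bZ^c:a,b,c\ge 0\}$ as a bonus.

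For (c), I would equip $H$ with the weight filtration $F_n:=\mathrm{span}\{X^aY^bZ^c:a+b+2c\le n\}$, so that $F_0=k$. Because every defining relation rewrites a weight-three commutator as a polynomial of strictly lower weight, the normal form from step (b) makes $\{F_n\}$ an algebra filtration. Since $\Delta$ is an algebra map and
\[
\Delta(X),\Delta(Y)\in F_1\otimes F_0+F_0\otimes F_1,\qquad \Delta(Z)\in F_2\otimes F_0+F_1\otimes F_1+F_0\otimes F_2,
\]
$\{F_n\}$ is also a coalgebra filtration. Then \cite[Lemma 5.3.4]{Mont} gives $H_0\subset F_0=k$, so the coradical of $H$ is $k$ and $H$ is connected. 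The only genuinely substantive obstacle is the $\sigma$-derivation check for $B(\lambda)$; once that is in place, the PBW basis, the GK calculation, and the filtration argument for connectedness all follow routinely.
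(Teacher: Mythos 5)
Your proposal is correct, and two of its three steps coincide with the paper's proof: the reduction of the Hopf-axiom verification to the generators $X,Y,Z$, and the connectedness argument via the weight filtration $F_n=\mathrm{span}\{X^aY^bZ^c: a+b+2c\le n\}$ combined with \cite[Lemma 5.3.4]{Mont}. The genuine difference is in how you produce the PBW basis and compute the GK-dimension: the paper invokes Bergman's Diamond Lemma to get the basis $\{X^{w_1}Y^{w_2}Z^{w_3}\}$ and then reads off $\GK=\varlimsup_{n\to\infty}\log_n\dim_k F_n=3$ from the filtration via \cite[Lemma 6.1(b)]{KL}, whereas you realize $A$ and $B$ as iterated Ore extensions, which yields the basis for free and lets you add one to the GK-dimension at each step; your identification of the $\sigma$-derivation condition $\delta'(XY-YX-Y)=0$ as the crux for $B(\lambda)$ is exactly the overlap-ambiguity computation the Diamond Lemma would force, so the two routes are close in content but yours is more structural. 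One caveat: the blanket claim that ``each Ore step over a finitely generated algebra adds one to GK-dimension'' is false in general --- $\GK A[x;\sigma,\delta]$ can exceed $\GK A+1$, even be infinite, when $\sigma$ or $\delta$ fails to be locally finite. You should note that in the cases at hand the generating subspace $V=k+kX+kY$ satisfies $\sigma^{\pm1}(V)\subseteq V$ and $\delta(V)\subseteq V^2$ (indeed $\delta$ is linear for $A$, and $\delta'(Y)=\tfrac12 Y^2\in V^2$ for $B$), so the standard equality for such locally finite Ore data from \cite{KL} applies; alternatively, once the PBW basis is in hand the growth of $F_n$ is visibly cubic, which is precisely the paper's computation.
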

\begin{proof}
We only prove the statement for $B(\lambda)$. The case of $A(\lambda_1, \lambda_2, \alpha)$ can be proved analogously. 

As mentioned in \cite[Section 1]{GZ}, to check $B(\lambda)$ is a Hopf algebra, it suffices to check the Hopf algebra axioms on a set of algebra generators for $B(\lambda)$, namely, $X, Y$ and $Z$. This is easy and we leave it to the readers.

By Bergman's Diamond Lemma, the algebra $B(\lambda)$ has a $k$-linear basis of monomials
\[\{X^{w_1}Y^{w_2}Z^{w_3}\},\]
where $w_i\in \mathbb{N}$. Define the degree of $X^{w_1}Y^{w_2}Z^{w_3}$ to be $w_1+w_2+2w_3$ and let $F_n$ be the $k$-space spanned by all monomials of degree $\le n$. It is easy to check that $\{F_n\}_{n\ge 0}$ is an algebra filtration on $A$ by the defining relations. Hence by \cite[Lemma 6.1 (b)]{KL}, 
\[\GK B(\lambda)=\varlimsup_{n\rightarrow \infty}\log_n\dim_k F_n=3.\]
Next, we claim that $\{F_n\}_{n\ge 0}$ is also a coalgebra filtration on $B$, i.e. $\Delta(F_n)\subset\sum_{i=0}^{n}F_i\otimes F_{n-i}$ for any $n$. Let $X^{w_1}Y^{w_2}Z^{w_3}$ be a monomial such that $w_1+w_2+2w_3\le n$. Then 
\begin{align*}
\Delta(X^{w_1}Y^{w_2}Z^{w_3})=&\Delta(X)^{w_1}\Delta(Y)^{w_2}\Delta(Z)^{w_3}\\
\in& (F_0\otimes F_1+F_1\otimes F_0)^{w_1+w_2}\cdot(\sum_{i=0}^2 F_i\otimes F_{2-i})^{w_3}\\
\subset &(\sum_{i=0}^{w_1+w_2} F_i\otimes F_{w_1+w_2-i})\cdot(\sum_{i=0}^{2w_3} F_i\otimes F_{2w_3-i})\\
\subset &\sum_{i=0}^{n}F_i\otimes F_{n-i}.
\end{align*}
For the last two inclusions, we use the fact that $\{F_n\}_{n\ge 0}$ is an algebra filtration. Then it follows from \cite[Lemma 5.3.4]{Mont} that the coradical of $B(\lambda)$ is contained in $F_0$, which is one-dimensional. Hence $B(\lambda)$ is a connected coalgebra. This completes the proof.
\end{proof}

Before moving on to study connected Hopf algebras of GK-dimension three, we still need a few lemmas.
\begin{lemma}\label{GKcrit}
Let $H$ be a connected Hopf $k$-algebra of finite GK-dimension and $K$ a Hopf subalgebra of $H$. If $\GK K=\GK H$, then $K=H$.
\end{lemma}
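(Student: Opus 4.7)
The plan is to argue by contradiction using the associated graded Hopf algebras together with Lemma \ref{GKplus1}. Suppose for contradiction that $K \subsetneq H$. Since both $K$ and $H$ are connected, Theorem \ref{integer} gives that $\gr H$ and $\gr K$ are both polynomial rings (in $\GK H$ and $\GK K$ variables respectively), and in particular $\gr K$ is a finitely generated graded Hopf algebra.

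The first technical step is to compare the coradical filtration of $K$ with the one induced from $H$, i.e.\ to prove $K_n = K\cap H_n$ for all $n\ge 0$. The base case $K_0=k=K\cap H_0$ is immediate. For the inductive step, given $c\in K\cap H_n$, write $\Delta(c) = c\otimes 1 + 1\otimes c + w$; the element $w$ lies in $H_{n-1}\otimes H_{n-1}$ (by the description of the coradical filtration in \cite[Lemma 5.3.2]{Mont}) and also in $K\otimes K$ (since $c\in K$), hence in $(K\cap H_{n-1})\otimes (K\cap H_{n-1}) = K_{n-1}\otimes K_{n-1}$ by induction. Appealing to \cite[Lemma 5.3.2]{Mont} again yields $c\in K_n$. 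Consequently the inclusion $K\hookrightarrow H$ is strictly filtered, inducing an injective graded Hopf algebra map $\gr K\hookrightarrow \gr H$.

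Next I would observe that the inclusion $\gr K\subseteq \gr H$ must be strict. Indeed, if $\gr K = \gr H$ as graded subspaces of $\gr H$, then $K_n/K_{n-1} = H_n/H_{n-1}$ for every $n$, which together with $K_n\subseteq H_n$ forces $K_n = H_n$ by induction, hence $K = \bigcup_n K_n = \bigcup_n H_n = H$, contradicting $K\subsetneq H$. So $\gr K$ is a proper finitely generated graded Hopf subalgebra of the connected coradically graded Hopf algebra $\gr H$.

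Now Lemma \ref{GKplus1} applies directly and gives $\GK \gr H \ge \GK \gr K + 1$. Combined with $\GK H = \GK \gr H$ and $\GK K = \GK \gr K$ from Theorem \ref{integer}, this yields $\GK H \ge \GK K + 1$, contradicting the hypothesis $\GK K = \GK H$. The main delicate point is the filtration compatibility $K_n = K\cap H_n$; everything else is a clean application of earlier results in the paper.
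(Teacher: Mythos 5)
Your proof is correct and follows essentially the same route as the paper: reduce to the associated graded Hopf algebras via Theorem \ref{integer}, note that $\gr K$ embeds as a proper graded Hopf subalgebra of $\gr H$ when $K\subsetneq H$, and apply Lemma \ref{GKplus1} to derive the contradiction. The only difference is that you verify the filtration compatibility $K_n=K\cap H_n$ by hand, whereas the paper simply cites \cite[Lemma 5.2.12]{Mont} for the embedding $\gr K\hookrightarrow\gr H$.
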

\begin{proof}
By \cite[Lemma 5.2.12]{Mont}, $\gr K$ is naturally embedded in $\gr H$ as a graded Hopf subalgebra. Also, by Theorem \ref{integer}, $\GK\gr K=\GK K=\GK H=\GK \gr H$ and they are all finitely generated. It suffices to show that $\gr K=\gr H$. If not, by Lemma \ref{GKplus1}, $\GK \gr H\ge \GK \gr K+1$, which is a contradiction.
\end{proof}

The following proposition is a direct consequence of Lemma \ref{GKcrit}.
\begin{proposition}\label{enveloping}
Let $H$ be a connected Hopf algebra of finite GK-dimension. Then $\GK H\ge \dim_kP(H)$. If $\GK H=\dim_k P(H)$, then $H\cong U(\mathfrak{g})$ as Hopf algebras, where $\mathfrak{g}=P(H)$. If $\GK H=3$, then $\dim_k P(H)=2$ or $3$.
\end{proposition}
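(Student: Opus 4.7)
The plan is to analyze the sub-Hopf algebra $K\subseteq H$ generated by $\mathfrak{g}:=P(H)$. Since $\mathfrak{g}$ consists of primitive elements, $\Delta$ sends $K$ into $K\otimes K$, so $K$ is a sub-bialgebra and hence a Hopf subalgebra by Lemma~\ref{sub}. Being generated by primitives, $K$ is cocommutative, and as a connected cocommutative Hopf algebra over a field of characteristic zero, the Cartier--Milnor--Moore theorem identifies $K$ with $U(P(K))$. Since $\mathfrak{g}\subseteq P(K)\subseteq P(H)=\mathfrak{g}$, we obtain $K\cong U(\mathfrak{g})$, and PBW gives $\GK K=\dim_k\mathfrak{g}$. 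This yields $\GK H\ge\GK K=\dim_k P(H)$, and in the equality case $\GK K=\GK H$ forces $K=H$ by Lemma~\ref{GKcrit}, so $H\cong U(\mathfrak{g})$.

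For the last assertion, the first inequality gives $\dim_k P(H)\le 3$, so it remains to rule out $\dim_k P(H)\in\{0,1\}$. A key preliminary is $\dim_k P(\gr H)=\dim_k P(H)$: in a connected Hopf algebra $H_1=k\oplus P(H)$, and a brief calculation shows that the primitive elements of a coradically graded connected Hopf algebra are exactly the degree-one elements, so $P(\gr H)=\gr H(1)=H_1/H_0\cong P(H)$. If $P(H)=0$, I would show by induction that $\gr H(n)=0$ for every $n\ge 1$: given vanishing below $n+1$, any $x\in\gr H(n+1)$ has $\Delta(x)\in\bigoplus_{i+j=n+1}\gr H(i)\otimes\gr H(j)$, which under the hypothesis lies in $\gr H(0)\otimes\gr H(n+1)+\gr H(n+1)\otimes\gr H(0)$, and the counit axiom then forces $\Delta(x)=1\otimes x+x\otimes 1$; being primitive with $P(\gr H)=\gr H(1)=0$, the element $x$ must be zero. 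Thus $\gr H=k$ and hence $H=k$, contradicting $\GK H=3$.

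The main obstacle is the case $\dim_k P(H)=1$, which I would handle via algebraic-group structure. By Lemma~\ref{commutative} and Theorem~\ref{integer}, $\gr H$ is a connected commutative Hopf algebra and, as an algebra, a polynomial ring in three variables; equivalently, $\gr H\cong\mathcal{O}(G)$ for a connected affine algebraic group $G$ whose underlying variety is $\mathbb{A}^3$, which forces $G$ to be unipotent of dimension three. Over an algebraically closed field of characteristic zero, three-dimensional connected unipotent algebraic groups correspond under the exponential map to three-dimensional nilpotent Lie algebras, which up to isomorphism are either the abelian $k^3$ or the Heisenberg algebra $\mathfrak{h}_3$, with abelianizations of dimensions three and two respectively. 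Since the primitives of $\mathcal{O}(G)$ are precisely the algebraic-group homomorphisms $G\to\mathbb{G}_a$, a space of dimension $\dim_k\mathfrak{l}/[\mathfrak{l},\mathfrak{l}]$ with $\mathfrak{l}=\mathrm{Lie}(G)$, we obtain $\dim_k P(\gr H)\ge 2$ in either case, contradicting $\dim_k P(\gr H)=1$.
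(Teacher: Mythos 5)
Your proof of the first two assertions is correct and essentially the paper's argument: the paper obtains the copy of $U(\mathfrak{g})$ inside $H$ via the universal property of $U(\mathfrak{g})$ together with the fact that a Hopf map injective on primitives of a connected Hopf algebra is injective (\cite[Corollary 5.4.7]{Mont}), whereas you identify the subalgebra generated by $P(H)$ with $U(\mathfrak{g})$ via Cartier--Milnor--Moore; both then finish with PBW and Lemma~\ref{GKcrit}. The real divergence is in the last assertion: the paper simply cites \cite[Lemma 5.11]{Z} and proves nothing, while you give a self-contained argument. Your argument is sound: the identification $P(\gr H)\cong P(H)$ is correct (primitives of a connected coradically graded Hopf algebra sit in degree one, and $H_1=k\oplus P(H)$), the case $P(H)=0$ correctly collapses $H$ to $k$, and for $\dim_k P(H)=1$ you correctly read off $\gr H\cong\mathcal{O}(G)$ with $G$ unipotent (this is perhaps most cleanly justified by the standard equivalence between coalgebra-connectedness of a commutative Hopf algebra and unipotence of the group in characteristic zero, rather than by the ``variety is $\mathbb{A}^3$'' route, though both work here), after which the classification of three-dimensional nilpotent Lie algebras and the identification $P(\mathcal{O}(G))\cong(\mathfrak{l}/[\mathfrak{l},\mathfrak{l}])^{*}$ force $\dim_k P(\gr H)\ge 2$. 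What your approach buys is independence from the external reference; what it costs is reliance on the algebraic-group dictionary (Theorem~\ref{integer}, exponential correspondence), which the cited lemma in \cite{Z} presumably avoids or packages differently.
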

\begin{proof} Let $\mathfrak{g}=P(H)$. Then the injective Lie algebra map $\mathfrak{g}\hookrightarrow H$ induces a Hopf map $U(\mathfrak{g})\rightarrow H$. It is well known that $P(U(\mathfrak{g}))=\mathfrak{g}$ and therefore by \cite[Corollary 5.4.7]{Mont} the Hopf map $U(\mathfrak{g})\rightarrow H$ is injective. By PBW Theorem, $\GK U(\mathfrak{g})=\dim_k \mathfrak{g}$. Hence $\GK H\ge \GK U(\mathfrak{g})= \dim_kP(H)$. If $\GK H=\dim_k P(H)$, then we have $\GK H= \GK U(\mathfrak{g})$. Hence $H=U(\mathfrak{g})$ by Lemma \ref{GKcrit}. The last statement is from \cite[Lemma 5.11]{Z}.
\end{proof}
The following proposition is an easy consequence of Proposition \ref{enveloping}. It is also mentioned in \cite{GZ}.
\begin{proposition}\label{lessthan3}
Let $H$ be a connected Hopf algebra of GK-dimension strictly less than $3$. Then $\GK H= 0, 1$ or $2$. In fact, 
\begin{enumerate}
\item[\textup{(I)}] if $\GK H=0$, then $H\cong k$, the trivial Hopf algebra;
\item[\textup{(II)}] if $\GK H=1$, then $H\cong k[x]$ with $x$ being primitive;
\item[\textup{(III)}] if $\GK H=2$, then $H\cong U(\mathfrak{g})$, where $\mathfrak{g}$ is either the $2$-dimensional abelian Lie algebra  or the Lie algebra with basis $\{x, y\}$ and $[x, y]=y$.
\end{enumerate}
\end{proposition}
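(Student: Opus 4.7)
The plan is to combine Theorem \ref{integer} with Proposition \ref{enveloping}. By Theorem \ref{integer}, since $\GK H < 3$ is finite, $\gr H$ is isomorphic as an algebra to a polynomial ring in $\ell = \GK H \in \{0, 1, 2\}$ variables. The case $\ell = 0$ is immediate: $\gr H = k$ forces $H_n = H_{n-1}$ for all $n\ge 1$, and since $H=\bigcup_n H_n$ with $H_0 = k$ (connected), we get $H = k$, which is case (I).

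For $\ell \in \{1, 2\}$, the strategy is to prove $\dim_k P(H) = \ell$ and then invoke Proposition \ref{enveloping}. A special case of Taft--Wilson for connected coalgebras gives $H_1 = k \oplus P(H)$, whence $\dim_k P(H) = \dim_k H_1 - 1 = \dim_k \gr H(1)$. Since $\gr H$ is coradically graded with $\gr H(0) = k$, the grading on the comultiplication forces $\Delta(\gr H(1))\subset \gr H(0)\otimes \gr H(1)+\gr H(1)\otimes \gr H(0)$, so every element of $\gr H(1)$ is primitive and $\dim_k P(H) = \dim_k P(\gr H)$. The problem thus reduces to computing $\dim_k P(\gr H)$.

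The key step, and the main obstacle, is to identify the Hopf structure on $\gr H$ beyond the mere algebra isomorphism produced by Proposition \ref{polynomial}. Since $\gr H$ is commutative (Lemma \ref{commutative}), finitely generated (Theorem \ref{integer}), and connected as a Hopf algebra, it equals $\mathcal{O}(\Gamma)$ for a connected commutative affine algebraic group $\Gamma$ over $k$ admitting no nontrivial characters. Because $\gr H$ is moreover a polynomial ring, $\Gamma$ is smooth of dimension $\ell$, and over an algebraically closed field of characteristic zero any such $\Gamma$ must be isomorphic to $\mathbb{G}_a^{\ell}$. Hence $\gr H\cong k[x_1,\ldots,x_\ell]$ as Hopf algebras with each $x_i$ primitive, yielding $\dim_k P(\gr H) = \ell$. (An alternative route is to apply the graded Milnor--Moore theorem to the cocommutative graded dual of $\gr H$.)

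Having established $\dim_k P(H) = \GK H$, Proposition \ref{enveloping} gives $H \cong U(P(H))$. For $\ell = 1$ the Lie algebra $P(H)$ is one-dimensional abelian, so $H \cong k[x]$ with $x$ primitive, which is case (II). For $\ell = 2$ the Lie algebra $P(H)$ is two-dimensional, and the standard classification of two-dimensional Lie algebras over an algebraically closed field of characteristic zero yields exactly the two possibilities listed in (III): the abelian Lie algebra, and the solvable non-abelian one with relation $[x,y]=y$.
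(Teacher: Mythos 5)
Your proof is correct, and it is considerably more detailed than what the paper offers: the paper simply declares the proposition ``an easy consequence of Proposition \ref{enveloping}'' and points to \cite{GZ}, leaving unproved the one nontrivial input, namely that $\dim_k P(H)=\GK H$ when $\GK H\le 2$. You supply that input by a genuinely different route: you pass to $\gr H$, use Theorem \ref{integer} to realize it as $\mathcal{O}(\Gamma)$ for a connected commutative affine group $\Gamma$ of dimension $\ell$ with no nontrivial characters (connectedness of the coalgebra kills the group-likes, hence the torus part), conclude $\Gamma\cong\mathbb{G}_a^{\ell}$ in characteristic zero, and read off $\dim_k P(\gr H)=\ell$; the identifications $P(H)\cong \gr H(1)=P(\gr H)$ are correct for a connected, coradically graded Hopf algebra. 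The implicit argument the paper has in mind instead runs through the lower bounds on $\dim_k P(H)$ from \cite{Z} and \cite{GZ} (in the spirit of the cohomological argument of Theorem \ref{3generators}: if $K=U(P(H))\subsetneq H$ one produces a new skew-primitive-like element and either a new primitive or a contradiction). Your approach buys a uniform, self-contained treatment of all three cases using only the paper's own Theorem \ref{integer} and standard structure theory of commutative unipotent groups, at the cost of invoking that external structure theory; the paper's intended route stays inside Hopf-algebra combinatorics but defers the hard step to the cited references. Note that your argument in fact shows $\dim_k P(H)=\GK H$ whenever $\gr H\cong \mathcal{O}(\mathbb{G}_a^\ell)$ as Hopf algebras, which for $\ell\ge 3$ is where the dichotomy of Proposition \ref{enveloping} breaks down, so the restriction to $\ell\le 2$ enters only through the (correct) claim that a commutative unipotent group is a vector group --- no issue there.
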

 
Now we focus on connected Hopf algebras of GK-dimension three.
The following theorem is the key to our main theorem of this section.
\begin{theorem}\label{3generators}
Let $H$ be a connected Hopf algebra of GK-dimension $\ge 3$ such that $\dim_k P(H)=2$. Then for any linearly independent primitive elements $x, y$, there exists $z\in H$ such that $\Delta(z)=1\otimes z+x\otimes y+z\otimes 1$. Moreover, if in addition $\GK H=3$, then for any such $z$, the set $\{x, y, z\}$ generates $H$ as an algebra.
\end{theorem}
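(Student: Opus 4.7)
Setting $V := P(H) = kx \oplus ky$, my plan for the first part is to analyze the coproduct of elements of $H_2$. For $w\in H_2\cap\ker\epsilon$, the coradical filtration forces $\Delta(w) - 1\otimes w - w\otimes 1 \in V\otimes V$; this descends to an injective linear map $\Phi\colon (H_2\cap\ker\epsilon)/V\hookrightarrow V\otimes V$ (injectivity: $\Phi(w)=0$ makes $w$ primitive, hence $w\in V$), whose image visibly contains $\operatorname{Sym}^2 V$ via $\Phi(x^2)=2x\otimes x$, $\Phi(xy)=x\otimes y+y\otimes x$, $\Phi(y^2)=2y\otimes y$ inside the enveloping subalgebra $U(V)\subset H$. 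Since $V\otimes V/\operatorname{Sym}^2 V$ is one-dimensional, the first part reduces to showing $\operatorname{Im}\Phi = V\otimes V$, equivalently $\gr H(2)\supsetneq \operatorname{Sym}^2 V$; then any $\Phi$-preimage of $x\otimes y$ is the desired $z$.

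To exclude $\gr H(2) = \operatorname{Sym}^2 V$, I pass to $\gr H$, which is connected, coradically graded, and commutative by Lemma \ref{commutative} and Remark \ref{coradically}. If $\gr H = k[V]$, then $\GK \gr H = 2$ and Theorem \ref{integer} would force $\GK H = 2$, contradicting $\GK H\ge 3$; hence there is a smallest $d$ with $\gr H(d)\supsetneq \operatorname{Sym}^d V$, and our assumption forces $d\ge 3$. Pick homogeneous $\bar u\in \gr H(d)\setminus \operatorname{Sym}^d V$, and let $P\subset \gr H$ be the subalgebra generated by $V\cup\{\bar u\}$. By the minimality of $d$, $\Delta(\bar u)-1\otimes\bar u-\bar u\otimes 1\in k[V]\otimes k[V]\subset P\otimes P$, so $P$ is a finitely generated sub-bialgebra, hence a Hopf subalgebra by Lemma \ref{sub}; Proposition \ref{polynomial} together with a Hilbert-series count gives $P\cong k[t_1,t_2,t_3]$ as graded algebras with $\deg t_1 = \deg t_2 = 1$ and $\deg t_3 = d$. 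Writing $P = \mathcal{O}(U_P)$ for the corresponding three-dimensional connected unipotent algebraic group, $\operatorname{Lie}(U_P)$ is graded with nonzero pieces only in degrees $1$ (two-dimensional) and $d$ (one-dimensional); since these cannot combine to land in degree $2$, $d+1$, or $2d$, every graded Lie bracket vanishes, so $\operatorname{Lie}(U_P)$ is abelian and $P\cong \mathcal{O}(\mathbb{G}_a^3)$ has a nonzero primitive in degree $d$. But primitives of $\gr H$ live in $\gr H(1) = V$, contradicting $d\ge 3$.

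For the second assertion, assume $\GK H = 3$ and let $K\subset H$ be the subalgebra generated by $\{x,y,z\}$. The coproducts of the generators lie in $K\otimes K$, so $K$ is a Hopf subalgebra (Lemma \ref{sub}); it is connected with $\GK K \le 3$, so $\gr K$ is a finitely generated polynomial ring by Theorem \ref{integer}, and it contains $\bar x,\bar y\in \gr K(1)$ together with $\bar z\in \gr K(2)$. Since $\Phi(\bar z) = x\otimes y$ has a nonzero antisymmetric component in $V\otimes V/\operatorname{Sym}^2 V\cong \Lambda^2 V$, $\bar z\notin \operatorname{Sym}^2 V$, so $\gr K \supsetneq \operatorname{Sym} V$; Lemma \ref{GKplus1} then yields $\GK \gr K \ge \GK \operatorname{Sym} V + 1 = 3$, and hence $\GK K = 3 = \GK H$, so Lemma \ref{GKcrit} forces $K = H$. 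The principal obstacle is the Lie-algebra step in the second paragraph: converting the hypothesis ``no new generator appears in degree $2$'' into the concrete abelianness of a three-generator graded Lie algebra with degrees $\{1,1,d\}$ for $d\ge 3$, and then exploiting the forced primitive in a degree forbidden by coradical grading to reach the contradiction.
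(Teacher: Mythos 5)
Your argument is correct, and the route to the existence of $z$ is genuinely different from the paper's. The paper fixes $C=U(\mathfrak{h})$ with $\mathfrak{h}=P(H)$, takes the least $N$ with $C_N\subsetneq H_N$, and observes that the reduced coproduct of a new element $z'\in H_N$ is a $2$-cocycle in the cobar complex $\Omega C$ representing a nonzero class; the computation $\h^2(\Omega C)\cong\Ext^2_{k[x_1,x_2]}(k,k)$, one-dimensional and spanned by the class of $x\otimes y$ (Lemma \ref{cohomology}), then lets one correct $z'$ by an element of $C$ so that $\overline{\Delta}(z)=x\otimes y$. You instead work entirely inside $\gr H$: since $\Phi$ is injective with image containing the symmetric tensors, everything reduces to $\gr H(2)\supsetneq \operatorname{Sym}^2V$, and you exclude the alternative by showing that a first ``new'' homogeneous generator in degree $d\ge 3$ would produce a graded polynomial Hopf subalgebra $P\cong k[t_1,t_2,t_3]$ with $\deg t_i=1,1,d$ whose Lie algebra, graded in degrees $1,1,d$, must be abelian for degree reasons, forcing $P\cong\mathcal{O}(\mathbb{G}_a^3)$ to have a three-dimensional primitive space --- impossible since $P(P)\subseteq P(1)\subseteq\gr H(1)=V$. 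This trades the explicit $\Ext$-computation for the algebraic-group machinery the paper has already built (Proposition \ref{polynomial}, Lemma \ref{GKplus1}), and it locates the new element in filtration degree exactly $2$ with no cohomological bookkeeping; the paper's cobar argument, in exchange, identifies the cohomology class precisely and is what pins $z$ down up to primitives and elements of $C$. Two details you should spell out: the \emph{graded} isomorphism $P\cong k[t_1,t_2,t_3]$ needs the remark that the graded surjection from the weighted polynomial ring is injective because both are domains of equal GK-dimension (Lemma \ref{regular}); and the final contradiction is cleanest as the dimension count $\dim_kP(P)=3$ versus $\dim_k P(1)\le 2$, rather than as a claim about the degree of the third primitive. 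Your proof of the generation statement is essentially the paper's (Lemmas \ref{sub}, \ref{GKplus1} and \ref{GKcrit}).
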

We postpone the proof to the last section.
Now we are ready to deliver the main theorem of this section.
\begin{theorem}\label{classification}
Let $H$ be a connected Hopf algebra of GK-dimension three. Then $H$ is isomorphic to one of the following:
\begin{enumerate}
\item[\textup{(I)}] The enveloping algebra $U(\mathfrak{g})$ for some three-dimensional Lie algebra $\mathfrak{g}$;
\item[\textup{(II)}] The Hopf algebras $A(0, 0, 0)$, $A(0, 0, 1)$, $A(1, 1, 1)$ or $A(1, \lambda, 0)$ from Example \ref{typeA} for some $\lambda\in k$;

\item[\textup{(III)}] The Hopf algebras $B(\lambda)$ from Example \ref{typeB} for some $\lambda\in k$.
\end{enumerate}
\end{theorem}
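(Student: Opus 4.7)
The plan is to split on $\dim_k P(H)$. Since $\GK H = 3$, Proposition~\ref{enveloping} gives $\dim_k P(H) \in \{2, 3\}$. When $\dim_k P(H) = 3$, the same proposition yields $H \cong U(\mathfrak{g})$ with $\mathfrak{g} = P(H)$, which is case~(I). The remainder of the argument handles $\dim_k P(H) = 2$; here I fix a basis $\{x, y\}$ of $P(H)$, note that $[x, y]$ lies in $P(H)$ (being a commutator of primitives), and apply Theorem~\ref{3generators} to obtain $z \in H$ with $\Delta(z) = 1 \otimes z + x \otimes y + z \otimes 1$ such that $\{x, y, z\}$ generates $H$ as an algebra. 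The analysis then splits on whether $[x, y] = 0$ or not.

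For the sub-case $[x, y] = 0$, applying $\Delta$ as an algebra map to $zx - xz$ and $zy - yz$ produces
\[
\Delta([z, x]) = 1 \otimes [z, x] + [z, x] \otimes 1 + x \otimes [y, x], \qquad \Delta([z, y]) = 1 \otimes [z, y] + [z, y] \otimes 1 + [x, y] \otimes y,
\]
both of which collapse under $[x, y] = 0$ to show $[z, x], [z, y] \in P(H)$. The operator $M := \ad z|_{P(H)}$ is thus a $2 \times 2$ matrix in the basis $(x, y)$. Under a change of basis $(x, y) \mapsto (x', y') = (x, y) T$, the element $z$ can be replaced by $z' := (\det T)\, z + w$, where $w$ is chosen in the span of $x^2, xy, y^2$ to cancel the extra symmetric terms in $x' \otimes y' - (\det T)\, x \otimes y$; since $w$ commutes with $x, y$, the operator $\ad z'|_{P(H)}$ equals $(\det T)\, T^{-1} M T$. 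Combining Jordan normalization over the algebraically closed field $k$ with this $\det T$-rescaling, $M$ reduces to one of four forms: the zero matrix (giving $A(0, 0, 0)$), a nilpotent Jordan block (giving $A(0, 0, 1)$), a Jordan block with nonzero eigenvalue rescaled to $1$ (giving $A(1, 1, 1)$), and the diagonal case $\mathrm{diag}(1, \lambda)$ (giving $A(1, \lambda, 0)$); this exhausts case~(II).

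For the sub-case $[x, y] \neq 0$, the two-dimensional Lie algebra $P(H)$ is non-abelian, so after a linear change of basis I may assume $[x, y] = y$; re-applying Theorem~\ref{3generators} to the new basis furnishes the corresponding $z$. The coproduct calculations above specialize to $\Delta([z, x]) = 1 \otimes [z, x] + [z, x] \otimes 1 - x \otimes y$ and $\Delta([z, y]) = 1 \otimes [z, y] + [z, y] \otimes 1 + y \otimes y$, so using $\Delta(y^2) = 1 \otimes y^2 + 2 y \otimes y + y^2 \otimes 1$, the elements $[z, x] + z$ and $[z, y] - \tfrac{1}{2} y^2$ are both primitive. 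Hence $[z, x] = -z + a x + b y$ and $[z, y] = \tfrac{1}{2} y^2 + c x + d y$ for scalars $a, b, c, d$. Applying the Jacobi identity to $\{z, x, y\}$, together with the identity $[y^2, x] = -2 y^2$ derived from $[x, y] = y$, forces $c = 0$ and $a = d$. Replacing $z$ by $z' := z - a x$ preserves $\Delta(z') = 1 \otimes z' + x \otimes y + z' \otimes 1$ and yields $[z', x] = -z' + b y$ and $[z', y] = \tfrac{1}{2} y^2$, matching $B(b)$ and giving case~(III).

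To conclude, in each sub-case the extracted relations determine a surjective Hopf algebra map from the model ($U(\mathfrak{g})$, the relevant $A(\lambda_1, \lambda_2, \alpha)$, or $B(\lambda)$) onto $H$, since $\{x, y, z\}$ generates $H$; each model is a Noetherian domain of GK-dimension three (via iterated Ore extension over $k[X, Y]$ in cases~(I) and~(II), and via the PBW basis displayed in Section~7 for $B(\lambda)$), and $H$ is a domain by Theorem~\ref{domain}, so Lemma~\ref{regular} upgrades the surjection to an isomorphism. I expect the main obstacle to lie in case~(II): one must carefully check that the available modifications of $z$ (by quadratic polynomials in $x, y$) together with basis changes in $P(H)$ really suffice to bring $M$ to one of the four listed normal forms, including the correct orientation between upper and lower Jordan blocks to match the convention $[Z, X] = \lambda_1 X + \alpha Y$, $[Z, Y] = \lambda_2 Y$ of Example~\ref{typeA}. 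The Jacobi computation in case~(III) is delicate but essentially mechanical once the primitive elements are identified.
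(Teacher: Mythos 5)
Your proposal is correct and follows essentially the same route as the paper: split on $\dim_k P(H)$, invoke Proposition \ref{enveloping} and Theorem \ref{3generators}, identify $[z,x]$, $[z,y]$ (or their corrections by $z$ and $\tfrac12 y^2$) as primitives, normalize via a change of basis of $P(H)$ together with a quadratic adjustment of $z$, and upgrade the resulting surjection from the model to an isomorphism by Lemma \ref{regular}. Your only deviations are cosmetic: you use the Jacobi identity where the paper expands $z(yx)=(zy)x$ (these are equivalent), and you package the Jordan normalization as the twisted conjugation $M\mapsto(\det T)\,T^{-1}MT$ where the paper fixes $\det P=1$ and absorbs the eigenvalue rescaling into the choice of Hopf surjection.
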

\begin{proof}
By Proposition \ref{enveloping}, $\dim_k P(H)$ is either $2$ or $3$. If $\dim_k P(H)=3$, then by Proposition $\ref{enveloping}$, $H\cong U(\mathfrak{g})$ as Hopf algebras, where $\mathfrak{g}=P(H)$. This gives the Hopf algebras in $\textup{(I)}$. Now we focus on the case $\dim_k P(H)= 2$.

Let $\mathfrak{h}=P(H)$. Then $\mathfrak{h}$ is a two-dimensional Lie algebra. It is well known that there are two isomorphic classes of two-dimensional Lie algebras. 

Case 1: The Lie algebra $\mathfrak{h}$ is spanned by $x$ and $y$ with $[x, y]=0$. By Theorem \ref{3generators}, there is $z\in H$ such that $\Delta(z)=1\otimes z+x\otimes y+z\otimes 1$ and $x, y, z$ generate $H$ as an algebra. It is easy to check that $[z, x]$ and $[z, y]$ are primitive elements. Therefore
\begin{align}\label{relation1}
[z, x]=a_{11}x+ a_{12}y,\\
[z, y]=a_{21}x+ a_{22}y,\nonumber
\end{align}
where $a_{ij}\in k$.

Let $P$ be a $2\times2$ invertible matrix such that $P^{-1}AP$ is a Jordan matrix, where $A=(a_{ij})$. We take $\det P=1$. Let $P=(b_{ij})$ and $P^{-1}=(c_{ij})$. Then by setting $x'=b_{11}x+b_{21}y$ and $y'=b_{12}x+b_{22}y$, the relations $(\ref{relation1})$ become
\begin{align*}
[z, x']&=\lambda_1x'+ \alpha y',\\
[z, y']&=\lambda_2y',\nonumber
\end{align*}
where $\left({\begin{array}{cc}\lambda_1 &\alpha\\ 0 &\lambda_2\end{array}}\right)$ is a Jordan matrix. Now we have 
\[\Delta(z)=1\otimes z+(c_{11}x'+c_{21}y')\otimes(c_{12}x'+c_{22}y')+z\otimes 1.\]
Let $z'=z-\frac{1}{2}c_{11}c_{12}{x'}^2-\frac{1}{2}c_{11}c_{22}{y'}^2-c_{12}c_{21}x'y'$. Then a direct calculation shows that 
\begin{equation}\label{comul}
\Delta(z')=1\otimes z' +x'\otimes y'+z'\otimes 1,
\end{equation}
and
\begin{align}\label{relation2}
[z', x']&=\lambda_1x'+ \alpha y',\\
[z', y']&=\lambda_2y',\nonumber
\end{align}
Notice that $x', y', z'$ generate $H$ and $[x', y']=0$. 

If $\lambda_1=\lambda_2=0$ and $\alpha=0$ (resp, $\alpha=1$), then there is a surjective Hopf map from $A(0, 0, 0)$ (resp. $A(0, 0, 1)$) to $H$ sending $X, Y, Z$ to $x', y', z'$, respectively.

If $\lambda_1=\lambda_2\neq 0$ and $\alpha=1$, then there is a surjective Hopf map from $A(1, 1, 1)$ to $H$ sending $X, Y, Z$ to $x', \frac{1}{\lambda_1}y', \frac{1}{\lambda_1}z'$, respectively.

If $\lambda_1\neq 0$ and $\alpha=0$, then there is a surjective Hopf map from $A(1, \lambda_2/\lambda_1, 0)$ to $H$ sending $X, Y, Z$ to $\frac{1}{\lambda_1}x', y', \frac{1}{\lambda_1}z'$, respectively.

If $\lambda_2\neq 0$ and $\alpha=0$, then there is a surjective Hopf map from $A(1, \lambda_1/\lambda_2, 0)$ to $H$ sending $X, Y, Z$ to $\frac{1}{\lambda_2}y', -x', \frac{1}{\lambda_1}(z'-x'y')$, respectively.

By Lemma \ref{regular}, all the above surjective Hopf maps are isomorphisms. This completes the proof of $\textup{(II)}$.

Case 2: The Lie algebra $\mathfrak{h}$ is spanned by $x$ and $y$ with $[x, y]=y$. Again by Theorem \ref{3generators}, there is $z\in H$ such that $\Delta(z)=1\otimes z+x\otimes y+z\otimes 1$ and $x, y, z$ generate $H$ as an algebra. A straight calculation shows that $[z, y]-\frac{1}{2}y^2$ and $[z, x]+z$ are primitive elements.
Therefore
\begin{align*}
[z, x]=-z+a_{11}x+ a_{12}y,\\
[z, y]=\frac{1}{2}y^2+a_{21}x+ a_{22}y,
\end{align*}
where $a_{ij}\in k$.  By replacing $z$ with $z-a_{11}x$, we can assume that $a_{11}=0$. We claim that $a_{21}=a_{22}=0$. Notice that the relations between $x, y, z$ can be rewritten as
\begin{align*}
yx&= xy-y,\\
zx&= xz-z+ a_{12}y,\\
zy&= yz+\frac{1}{2}y^2+a_{21}x+a_{22}y.
\end{align*}
By these relations, we have 
\[z(yx)=xyz+\frac{1}{2}xy^2+a_{21}x^2+a_{22}xy-2yz+(a_{12}-1)y^2-2a_{21}x-2a_{22}y.\]
On the other hand, 
\[(zy)x=xyz+\frac{1}{2}xy^2+a_{21}x^2+a_{22}xy-2yz+(a_{12}-1)y^2-a_{22}y.\]
Since $z(yx)=(zy)x$ by associativity, we have $-2a_{21}x-2a_{22}y=-a_{22}y$, which implies that $a_{21}=a_{22}=0$. Now it is clear that there is a surjective Hopf algebra map from $B(a_{12})$ to $H$ sending $X, Y, Z$ to $x, y, z$, respectively.
By Lemma \ref{regular}, this surjective map is an isomorphism. This completes the proof of $\textup{(III)}$.
\end{proof}

\begin{remark} It is clear from the proof of Theorem \ref{classification} that any Hopf algebra $H$ listed in $\textup{(II)}$ and $\textup{(III)}$ has GK-dimension $3$ and $\dim_k P(H)=2$.
\end{remark}
In fact, by following the same lines as in case $1$ and case $2$ in the proof of the previous theorem, we have the following proposition.
\begin{proposition}
Let $H$ be a connected Hopf algebra of GK-dimension $\ge 3$ such that $\dim_k P(H)=2$. Then for any linearly independent primitive elements $x, y$, there exists $z\in H$ such that $\Delta(z)=1\otimes z+x\otimes y+z\otimes 1$. Moreover, the algebra $C$ generated by $x, y, z$ is a Hopf subalgebra of $H$ and $\GK C=3$.
\end{proposition}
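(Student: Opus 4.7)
The existence of $z$ is immediate from Theorem \ref{3generators}. For the claim that $C = k\langle x, y, z \rangle$ is a Hopf subalgebra, I would first verify $C$ is a sub-bialgebra: each of $\Delta(x), \Delta(y), \Delta(z)$ lies in $C\otimes C$ by inspection (the first two are primitive; the third is given by hypothesis), and since $\Delta$ is an algebra map we conclude $\Delta(C)\subseteq C\otimes C$. Because $C$ is connected (its coradical lies in $H_0=k$), Lemma \ref{sub} upgrades the sub-bialgebra $C$ to a Hopf subalgebra of $H$.

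To show $\GK C = 3$, I would establish matching bounds. Since $x, y \in P(C) \subseteq P(H)$ and $x, y$ are linearly independent, $P(C) = kx + ky$ is two-dimensional. For the upper bound, I plan to reproduce the case analysis of Cases 1 and 2 in the proof of Theorem \ref{classification}, now applied to $C$. When $[x, y] = 0$, the brackets $[z, x]$ and $[z, y]$ are readily computed to be primitive in $C$, hence of the form $ax + by$; after a linear change of variables, these relations exhibit $C$ as an algebra quotient of some $A(\lambda_1, \lambda_2, \alpha)$. When $[x, y] = y$, the elements $[z, x] + z$ and $[z, y] - \tfrac12 y^2$ are primitive, and an associativity check on $z(yx) = (zy)x$ (verbatim from Theorem \ref{classification}) forces $C$ to be an algebra quotient of some $B(\lambda)$. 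Since both families have GK-dimension $3$, we obtain $\GK C \leq 3$.

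Proposition \ref{enveloping} gives $\GK C \geq \dim_k P(C) = 2$. The delicate point---and what I would single out as the main obstacle---is ruling out $\GK C = 2$. If this held, Proposition \ref{lessthan3}(III) would force $C \cong U(\mathfrak{g})$ for some two-dimensional Lie algebra $\mathfrak{g}$, so $C$ would be cocommutative. But $(\Delta - \tau\Delta)(z) = x \otimes y - y \otimes x \neq 0$ since $x, y$ are linearly independent, contradicting cocommutativity. Hence $\GK C \geq 3$, and combining with the upper bound gives $\GK C = 3$.
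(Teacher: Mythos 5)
Your argument is correct, and for the existence of $z$, the sub-bialgebra/Hopf-subalgebra step via Lemma \ref{sub}, and the upper bound $\GK C\le 3$ it is exactly what the paper intends: the paper's entire proof is the remark that one follows Cases 1 and 2 of Theorem \ref{classification}, i.e.\ the observation that $[z,x]$ and $[z,y]$ (suitably corrected) land in $P(H)=kx+ky$, so that $C$ is an algebra quotient of some $A(\lambda_1,\lambda_2,\alpha)$ or $B(\lambda)$, each of GK-dimension $3$. Where you take a genuinely different route is the lower bound, which you rightly single out as the delicate point. The paper's mechanism (spelled out in the last paragraph of the proof of Theorem \ref{3generators}) is Lemma \ref{GKplus1}: $k\langle x,y\rangle\cong U(\mathfrak{h})$ is a \emph{proper} Hopf subalgebra of $C$ (proper because $U(\mathfrak{h})$ is cocommutative while $\Delta(z)-\tau\Delta(z)=x\otimes y-y\otimes x\neq 0$), so passing to associated graded algebras gives $\GK C\ge\GK U(\mathfrak{h})+1=3$; this same bound is what lets the paper invoke Lemma \ref{regular} to turn the surjections from $A(\lambda_1,\lambda_2,\alpha)$ or $B(\lambda)$ into isomorphisms. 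You instead rule out $\GK C=2$ by quoting the low-dimensional classification (Proposition \ref{lessthan3}) to force $C\cong U(\mathfrak{g})$, hence cocommutative, and then use the same non-cocommutativity of $\Delta(z)$. Both are valid; yours is slightly more economical at this step (no appeal to Lemma \ref{GKplus1} or the graded picture), while the paper's is the device used uniformly throughout Sections 7--8 and does not presuppose the classification in GK-dimension $\le 2$. One detail your ordering handles correctly and should be kept explicit: since $\GK H$ may be infinite, the bound $\GK C\le 3$ must be in hand before Propositions \ref{enveloping} and \ref{lessthan3} can be applied to $C$.
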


For the rest of this section, we are going to look closer at the Hopf algebras listed in Theorem \ref{classification} $\textup{(II)}$ and $\textup{(III)}$. 
\begin{lemma}\label{dim1}
Let $H$ be a connected Hopf algebra of GK-dimension three such that $\dim_k P(H)=2$ and let $x, y, z$ be a set of generators as described in Theorem \ref{3generators}. Denote by $K$ the Hopf subalgebra generated by $x$ and $y$. Then $H_2/K_2$ is spanned by the image of $z$.
\end{lemma}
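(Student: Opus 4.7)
My plan is to use the reduced coproduct, defined for $h\in H^+=\ker\epsilon$ by $\overline\Delta(h)=\Delta(h)-1\otimes h-h\otimes 1$, to translate the statement into a linear-algebra computation inside $P(H)\otimes P(H)$. Standard checks give $\overline\Delta(H^+)\subset H^+\otimes H^+$, and restricting to the augmentation-zero part $H_2^+=H_2\cap H^+$ forces $\overline\Delta(h)\in H_1^+\otimes H_1^+=P(H)\otimes P(H)$, with kernel exactly $P(H)$. Since $H_2=k\oplus H_2^+$ and $H_1=k\oplus P(H)$, this yields an injection $H_2/H_1\hookrightarrow P(H)\otimes P(H)$ into a four-dimensional space.

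Next I would pin down $K_2$ explicitly. Because $x,y$ are primitive and span $P(H)=P(K)$, we have $K\cong U(\mathfrak{h})$ where $\mathfrak{h}=\mathrm{span}\{x,y\}$. The PBW filtration on $U(\mathfrak{h})$ is simultaneously an algebra filtration and a coalgebra filtration, and its associated graded is the symmetric algebra $S(\mathfrak{h})$. Since $S(\mathfrak{h})$ is connected and generated in degree $1$ by primitives, it is coradically graded; by Lemma~\ref{gr}, the PBW filtration therefore coincides with the coradical filtration of $K$. In particular $K_1=H_1$ and $K_2=\mathrm{span}\{1,x,y,x^2,xy,y^2\}$ (in the non-abelian case $yx=xy-y$ already lies in this span). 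A direct computation gives $\overline\Delta(x^2)=2x\otimes x$, $\overline\Delta(xy)=x\otimes y+y\otimes x$, and $\overline\Delta(y^2)=2y\otimes y$, so $\overline\Delta(K_2^+)$ is exactly the symmetric subspace $\mathrm{Sym}^2 P(H)\subset P(H)\otimes P(H)$, which is three-dimensional.

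Composing $\overline\Delta$ with the projection onto the one-dimensional quotient $P(H)\otimes P(H)/\mathrm{Sym}^2 P(H)$ then produces an injection $H_2/K_2\hookrightarrow P(H)\otimes P(H)/\mathrm{Sym}^2 P(H)$. A quick check gives $\overline\Delta(z)=x\otimes y$, which is nonzero modulo the symmetric part because $x,y$ are linearly independent; hence the image of $z$ spans the one-dimensional quotient. This forces $\dim_k H_2/K_2=1$ and identifies $[z]$ as its generator. The only step that requires real care is identifying $K_2$ as the PBW-degree-$\le 2$ part of $U(\mathfrak{h})$; this is precisely the sort of application Lemma~\ref{gr} is designed for, once one observes that $S(\mathfrak{h})$ is coradically graded. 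Everything else reduces to routine computations with the coproduct.
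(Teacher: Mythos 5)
Your argument is correct, and it follows a genuinely different route from the paper. The paper's proof passes to the associated graded Hopf algebra: since $H_1=K_1$, it reduces to showing $\gr H(2)/\gr K(2)$ is one-dimensional, and this falls out of the \emph{generation} statement in Theorem \ref{3generators} via $\gr H(2)=k\overline{z}+(\gr H(1))^2=k\overline{z}+\gr K(2)$. You instead work directly with the reduced coproduct: $\overline\Delta$ embeds $H_2^+/P(H)$ into $P(H)\otimes P(H)$, the image of $K_2^+$ is exactly $\mathrm{Sym}^2P(H)$ (using the PBW/coradical filtration identification of $K_2$, correctly justified via Lemma \ref{gr}), and $\overline\Delta(z)=x\otimes y$ is nonzero in the one-dimensional quotient; injectivity of $H_2^+/K_2^+\to (P(H)\otimes P(H))/\mathrm{Sym}^2P(H)$ follows because any $h\in H_2^+$ with $\overline\Delta(h)\in\mathrm{Sym}^2P(H)=\overline\Delta(K_2^+)$ differs from an element of $K_2^+$ by a primitive, which lies in $K$ since $\dim_kP(H)=2$ --- you leave this last step implicit, but it is immediate from the facts you establish. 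What your approach buys: it never invokes the (harder) second half of Theorem \ref{3generators}, i.e.\ that $x,y,z$ generate $H$, nor really the hypothesis $\GK H=3$ --- only $\dim_kP(H)=2$ and the coproduct formula for $z$ --- so it is both more elementary and slightly more general. What the paper's approach buys is brevity: given that the generation statement has already been proved, the graded computation is three lines. Both are valid; yours is a legitimate alternative proof.
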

\begin{proof}
It is clear that $z\in H_2\setminus K_2$. Hence we only have to show that $H_2/K_2$ is one-dimensional. Notice that $H_1=K_1$. Therefore it suffices to show that $\gr H(2)/\gr K(2)$ is one dimensional. By Theorem \ref{3generators}, $\overline{x}, \overline{y}\in \gr H(1)$ and $\overline{z}\in \gr H(2)$ generate $\gr H$. It is also clear that $\gr K$ is the Hopf subalgebra of $\gr H$ generated by $\overline{x}$ and $\overline{y}$. As a consequence, $\gr H(2)= k\overline{z}+(\gr H(1))^2= k\overline{z}+(\gr K(1))^2= k\overline{z}+\gr K(2)$. This completes the proof.
\end{proof}

For any Hopf algebra $H$, the commutator ideal $[H, H]$ is a Hopf ideal \cite[Lemma 3.7]{GZ}. We call $H/[H, H]$ the \textbf{abelianization} of $H$. For any $h\in H$, let $\ad(h)\in \End_k(H)$ be the linear map sending $u$ to $[h, u]$ for any $u\in H$.

\begin{proposition}\label{isoclass}
For any given $\lambda\in k$, $A(0, 0, 0)$, $A(0, 0, 1)$, $A(1, 1, 1)$ and $A(1, \lambda, 0)$ are pairwise non-isomorphic. Also, $A(1, \lambda, 0)\cong A(1, \gamma, 0)$ if any only if $\lambda=\gamma$ or $\lambda\gamma=1$.
\end{proposition}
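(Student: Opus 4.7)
My plan is to use the adjoint action of $Z$ on the space of primitives $P(H) = kX + kY$ as the distinguishing Hopf-isomorphism invariant. By Theorem \ref{3generators}, for each algebra in the list $P(H)$ is two-dimensional and abelian. Let $K \subseteq H$ denote the canonical sub-Hopf algebra generated by $P(H)$; abelianness of $P(H)$ gives $K \cong k[X, Y]$, and Lemma \ref{dim1} says that $H_2/K_2$ is one-dimensional, spanned by the class of $Z$. A direct computation with $\Delta(Z) = 1 \otimes Z + X \otimes Y + Z \otimes 1$ and $[X,Y]=0$ shows that $[Z,X]$ and $[Z,Y]$ are primitive, so $\ad(Z)$ restricts to an endomorphism of $P(H)$; and because $K$ is commutative, $\ad(w)|_{P(H)} = 0$ for any $w \in K_2$, so replacing $Z$ by $cZ + w$ (with $c \in k^{\times}$, $w \in K_2$) rescales $\ad(Z)|_{P(H)}$ by $c$.

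Next I would show that this produces a well-defined invariant modulo the action of similarity-plus-scaling. For any Hopf isomorphism $\phi \colon H \to H'$ between two algebras in the list, $\phi$ preserves the coradical filtration, restricts to an isomorphism $P(H) \to P(H')$, and carries $K$ onto $K'$; hence it induces an isomorphism $H_2/K_2 \to H'_2/K'_2$ of one-dimensional spaces. So $\phi(Z) = sZ' + w'$ for some $s \in k^{\times}$ and $w' \in K'_2$, and the commutativity of $K'$ then gives
\[
\phi \circ \ad(Z)|_{P(H)} \;=\; s\, \ad(Z')|_{P(H')} \circ \phi|_{P(H)}.
\]
In matrix language, if $M$ and $M'$ are the matrices of $\ad(Z)|_{P(H)}$ and $\ad(Z')|_{P(H')}$ in the bases $(X,Y)$ and $(X',Y')$, and $B$ the matrix of $\phi|_{P(H)}$, then $BM = sM'B$, so $M'$ is similar to $s^{-1}M$. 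Hence the similarity-and-scaling class of $M$ is a Hopf-iso invariant.

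Now the defining relations in Example \ref{typeA} immediately yield the matrices
\[
A(0,0,0) : \begin{pmatrix} 0 & 0 \\ 0 & 0 \end{pmatrix}, \quad A(0,0,1): \begin{pmatrix} 0 & 0 \\ 1 & 0 \end{pmatrix}, \quad A(1,1,1): \begin{pmatrix} 1 & 0 \\ 1 & 1 \end{pmatrix}, \quad A(1,\lambda,0): \begin{pmatrix} 1 & 0 \\ 0 & \lambda \end{pmatrix}.
\]
These fall into four disjoint similarity-and-scaling orbits: the zero matrix; the nonzero nilpotents; the non-semisimple matrices with double nonzero eigenvalue; and the semisimple matrices (which absorb every $A(1,\lambda,0)$). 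This separates the first three algebras from each other and from any $A(1,\lambda,0)$. For diagonal matrices, $\bigl(\begin{smallmatrix}1&0\\0&\lambda\end{smallmatrix}\bigr)$ and $\bigl(\begin{smallmatrix}1&0\\0&\gamma\end{smallmatrix}\bigr)$ lie in the same orbit iff $\{1,\lambda\} = \{c, c\gamma\}$ as multisets for some $c \in k^{\times}$, equivalently $\lambda = \gamma$ or $\lambda\gamma = 1$. The converse implication (construction of an isomorphism when $\lambda\gamma = 1$, $\lambda \neq 0$) is already furnished by the proof of Theorem \ref{classification}, where both $A(1, \lambda_2/\lambda_1, 0)$ and $A(1, \lambda_1/\lambda_2, 0)$ are exhibited as isomorphic to a common $H$ whenever $\lambda_1, \lambda_2 \neq 0$.

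I expect the main obstacle to be the bookkeeping in the second paragraph: checking that the scalar $s$ obtained from $H_2/K_2 \to H'_2/K'_2$, together with the freedom to change basis of $P(H)$, accounts precisely for the similarity-plus-scaling action on $M$ and no more, so the invariant is genuinely well-defined. Once that is nailed down, the remainder reduces to a standard Jordan-form analysis of $2 \times 2$ matrices.
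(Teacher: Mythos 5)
Your proposal is correct and follows essentially the same route as the paper: both arguments hinge on Lemma \ref{dim1} to force $\phi(Z)=sZ'+w'$ with $w'\in K'_2$ commuting with all primitives, and then compare the adjoint action of $Z$ on $P(H)$. The only difference is that you package the full conjugation-plus-scaling orbit of the matrix of $\ad(Z)|_{P(H)}$ into a single uniform invariant, which subsumes the paper's preliminary abelianization computation (used there to separate, e.g., $A(0,0,0)$ and $A(0,0,1)$ from the rest) --- a modest streamlining rather than a different method.
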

\begin{proof}
We start with the first statement. It is clear from defining relations that the abelianizations of $A(0, 0, 0)$, $A(0, 0, 1)$ and $A(1, 1, 1)$ are $A(0, 0, 0)$, $k[X, Z]$ and $k[Z]$, respectively. And the abelianization of $A(1, \lambda, 0)$ is $k[Z]$ if $\lambda\neq 0$, and $k[Y, Z]$ if $\lambda=0$. Now to prove the first statement, we only have to show $A(0, 0, 1)\ncong A(1, 0, 0)$ and $A(1, 1, 1)\ncong A(1, \lambda, 0)$ for $\lambda\neq 0$.

Suppose that $f$ is a Hopf isomorphism from $H':=A(1, 1, 1)$ to $ H:=A(1, \lambda, 0)$. We label the canonical generators of $H'$ by $X', Y'$ and $Z'$. Let $K'$ (resp. $K$) be the Hopf subalgebra of $H'$ (resp. $H$) generated by $X', Y'$ (resp. $X, Y$). Then $f$ restricts to a Hopf isomorphism from $K'$ to $K$. As a consequence, $f$ induces a linear isomorphism from $H'_2/K'_2$ to $H_2/K_2$. This indicates that $f(Z')$ is of the form $aZ+u$ for some $a\in k^{\times}$ and $u\in K_2$. Now consider the maps
$$\ad(Z'): H'_1\rightarrow H'_1\,\, \text{and}\,\, \ad(f(Z')): H_1\rightarrow H_1. $$
Since $f$ restricts to a linear isomorphism from $H'_1$ to $H_1$ and $f\circ \ad(Z')=\ad(f(Z'))\circ f$, the two maps must have the same eigenvalues and the same number of independent eigenvectors. However, from the defining relations we see that $\ad(Z')$ has only one linearly independent eigenvector while $\ad(f(Z'))=\ad(aZ+u)$ has two. This shows that $A(1, 1, 1)\ncong A(1, \lambda, 0)$. If we replace $H'$ and $H$ by $A(0, 0, 1)$ and $A(1, 0, 0)$ respectively, then the above argument shows that $A(0, 0, 1)\ncong A(1, 0, 0)$. This completes the proof of the first statement.

Next, we proceed to prove the second statement. As mentioned before, the abelianization of $A(1, \lambda, 0)$ is $k$ if $\lambda\neq 0$, and $k[Y]$ if $\lambda=0$. Hence $A(1, 0, 0)\ncong A(1, \lambda, 0)$ for any $\lambda\neq 0$. Now assume $A(1, \lambda, 0)\cong A(1, \gamma, 0)$ and we have to show that either $\lambda=\gamma$ or $\lambda\gamma=1$. Repeat the argument in the second paragraph of the proof by taking $H'=A(1, \lambda, 0)$ and $H=A(1, \gamma, 0)$. It is easy to check by defining relations that $\ad Z'$ has eigenvalues $\{1, \lambda\}$ and  $\ad f(Z')$ has eigenvalues $\{a, a\gamma\}$. Since they have the same eigenvalues, we must have 
$$\begin{cases}
1=a\\
\lambda=a\gamma
\end{cases}
\text{or}\,\,\,\,
\begin{cases}
1=a\gamma\\
\lambda=a
\end{cases}.$$
Clearly, these imply that either $\lambda=\gamma$ or $\lambda\gamma=1$.

Conversely, we only have to show that if $\lambda\gamma=1$, then $A(1, \lambda, 0)\cong A(1, \gamma, 0)$. Label the canonical generators of $A(1, \lambda, 0)$ by $X', Y'$ and $Z'$. Then there is a surjective Hopf map from $A(1, \lambda, 0)$ to $A(1, \gamma, 0)$ sending $X', Y', Z'$ to $Y, -\lambda X, \lambda (Z-XY)$, respectively. This map is an isomorphism by Lemma \ref{regular}.
\end{proof}

\begin{proposition} $B(\lambda)\cong B(\gamma)$ if and only if $\lambda=\gamma$.
\end{proposition}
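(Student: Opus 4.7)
The nontrivial direction is to show that any Hopf algebra isomorphism $f\colon B(\lambda)\to B(\gamma)$ forces $\lambda=\gamma$. The plan proceeds in three stages.

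First, I would analyze $f$ on primitive elements. From the coalgebra filtration on $B(\lambda)$ defined by $\deg X=\deg Y=1$, $\deg Z=2$ (which is a coalgebra filtration by the computation already performed in the proof that $B(\lambda)$ is connected), one sees $P(B(\lambda))=kX\oplus kY$, with Lie bracket $[X,Y]=Y$. So $f$ restricts to an isomorphism of the unique non-abelian $2$-dimensional Lie algebra. Any such automorphism preserves the derived subalgebra $kY$, and a short direct computation forces $f(Y)=bY$ and $f(X)=X+cY$ for some $b\in k^{\times}$ and $c\in k$.

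Second, I would pin down $f(Z)$ up to primitives. Applying $f$ to $\Delta(Z)=1\otimes Z+X\otimes Y+Z\otimes 1$ gives
\[
\Delta(f(Z))=1\otimes f(Z)+bX\otimes Y+bc\,Y\otimes Y+f(Z)\otimes 1.
\]
Using $\Delta(Y^2)=1\otimes Y^2+2Y\otimes Y+Y^2\otimes 1$, the element $w:=\tfrac{1}{b}\bigl(f(Z)-\tfrac{bc}{2}Y^2\bigr)$ satisfies $\Delta(w)=1\otimes w+X\otimes Y+w\otimes 1$ in $B(\gamma)$, the same coproduct as the generator $Z\in B(\gamma)$. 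Hence $w-Z$ is primitive in $B(\gamma)$, so
\[
f(Z)=bZ+b\alpha X+b\beta Y+\tfrac{bc}{2}Y^2
\]
for some $\alpha,\beta\in k$.

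Third, I would apply $f$ to the two remaining defining relations of $B(\lambda)$ and compute both sides inside $B(\gamma)$, using $[Z,Y]=\tfrac{1}{2}Y^2$, $[Z,X]=-Z+\gamma Y$ and the auxiliary identity $[Y^2,X]=-2Y^2$ in $B(\gamma)$. Applying $f$ to $[Z,Y]=\tfrac{1}{2}Y^2$ leaves a residual term $b^2\alpha Y$ on the left, forcing $\alpha=0$. Applying $f$ to $[Z,X]=-Z+\lambda Y$ and matching the coefficient of $Y$ then yields $b\gamma=b\lambda$, hence $\lambda=\gamma$.

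There is no deep conceptual obstacle; the main risk is bookkeeping in the commutator computations, where the correction $\tfrac{bc}{2}Y^2$ in $f(Z)$ interacts with $[Y^2,X]=-2Y^2$ and with the $cY$ part of $f(X)$. The key structural observation is that the coproduct of $Z$ determines $f(Z)$ up to two primitive parameters, the rigid relation $[Z,Y]=\tfrac{1}{2}Y^2$ kills the $X$-parameter, and then the scalar $\lambda$ is read directly from the $Y$-coefficient in $[Z,X]=-Z+\lambda Y$.
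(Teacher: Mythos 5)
Your proposal is correct and follows essentially the same route as the paper's proof: identify $f$ on primitives using $[X,Y]=Y$, determine $f(Z)$ up to primitive terms from the coproduct of $Z$, kill the $X$-coefficient via $[Z,Y]=\tfrac12 Y^2$, and read off $\lambda=\gamma$ from the $Y$-coefficient in $[Z,X]=-Z+\lambda Y$. The commutator bookkeeping you flag as the main risk checks out (in particular $[Y^2,X]=-2Y^2$ makes the $Y^2$ terms match on both sides).
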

\begin{proof}
Label the canonical generators of $B(\lambda)$ by $X', Y', Z'$. Suppose that $f$ is an isomorphism from $H':=B(\lambda)$ to $H:=B(\gamma)$. By Theorem \ref{classification}, $P(H')$ (resp. $P(H)$) are spanned by $X', Y'$ (resp. $X, Y$). Notice that $f$ restricts to a linear isomorphism from $P(H')$ to $P(H)$. Hence 
\begin{align*}
f(X')&=a_{11}X+a_{12}Y,\\
f(Y')&=a_{21}X+a_{22}Y,
\end{align*}
for some non-degenerate matrix $(a_{ij})$. Since $[f(X'), f(Y')]=f(Y')$, we have 
\[[a_{11}X+a_{12}Y, a_{21}X+a_{22}Y]=a_{21}X+a_{22}Y.\]
By using the fact $[X, Y]=Y$ and comparing the coefficients, we find that $a_{11}=1$, $a_{21}=0$ and $a_{22}\ne 0$. Since $f$ is a coalgebra map, 
\[\Delta f(Z')=(f\otimes f)\Delta(Z')=1\otimes f(Z')+(X+a_{12}Y)\otimes a_{22}Y+f(Z')\otimes 1.\]
Then it is easy to check that $f(Z')-a_{22}Z-\frac{1}{2}a_{12}a_{22}Y^2\in P(H)$. As a consequence, there exists $c, d\in k$ such that 
\[f(Z')=a_{22}Z+\frac{1}{2}a_{12}a_{22}Y^2+cX+dY.\]
Since $[f(Z'), f(Y')]=\frac{1}{2}f(Y')^2$, 
\[[a_{22}Z+\frac{1}{2}a_{12}a_{22}Y^2+cX+dY, a_{22}Y]=\frac{1}{2}a_{22}^2Y^2.\]
By comparing the coefficients, we find that $c=0$. Now the relation $[f(Z'), f(X')]=-f(Z')+\lambda f(Y')$ gives
\begin{equation}\label{compare}
[a_{22}Z+\frac{1}{2}a_{12}a_{22}Y^2+dY, X+a_{12}Y]=-a_{22}Z-\frac{1}{2}a_{12}a_{22}Y^2-dY+\lambda a_{22}Y.
\end{equation}
The left-hand side of (\ref{compare}) becomes 
\[-a_{22}Z+a_{22}\gamma Y+\frac{1}{2}a_{12}a_{22}Y^2-a_{12}a_{22}Y^2-dY.\]
Comparing this with the right-hand side of (\ref{compare}) we have $\lambda=\gamma$. This completes the proof.
\end{proof}
We conclude the section by two propositions regarding the algebra structures of the Hopf algebras $A(\lambda_1, \lambda_2, \alpha)$ and $B(\lambda)$, the first of which suggests that they can be considered as coalgebra deformations of universal enveloping algebras. However, we will not pursue this direction further.

\begin{proposition} For any choice of $(\lambda_1, \lambda_2, \alpha)$ (resp. $\lambda$), as an algebra, $A(\lambda_1, \lambda_2, \alpha)$ (resp. $B(\lambda)$) is isomorphic to the enveloping algebra of a solvable Lie algebra.
\end{proposition}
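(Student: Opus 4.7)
The strategy is the same in both cases: exhibit a three-dimensional solvable Lie algebra $\mathfrak{g}$ and a surjective algebra map $U(\mathfrak{g})\twoheadrightarrow A(\lambda_1,\lambda_2,\alpha)$ (resp. $U(\mathfrak{g})\twoheadrightarrow B(\lambda)$), and then upgrade surjectivity to an isomorphism via Lemma \ref{regular}. This last step works because $U(\mathfrak{g})$ is a noetherian domain with $\GK U(\mathfrak{g})=3$ by the PBW theorem, and the target is also known to have GK-dimension $3$, so $\GK U(\mathfrak{g})=3<\GK(\text{target})+1=4$.

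For $A(\lambda_1,\lambda_2,\alpha)$: the three defining relations
\[
[X,Y]=0,\quad [Z,X]=\lambda_1 X+\alpha Y,\quad [Z,Y]=\lambda_2 Y,
\]
are already Lie bracket relations on $V:=kX\oplus kY\oplus kZ$, so $V$ acquires the structure of a Lie algebra $\mathfrak{g}_A$, and the inclusion $V\hookrightarrow A(\lambda_1,\lambda_2,\alpha)$ extends to an algebra map $\phi\colon U(\mathfrak{g}_A)\to A(\lambda_1,\lambda_2,\alpha)$. Since $X,Y,Z$ generate $A(\lambda_1,\lambda_2,\alpha)$, the map $\phi$ is surjective, hence an isomorphism by Lemma \ref{regular}. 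Solvability of $\mathfrak{g}_A$ is immediate: $[\mathfrak{g}_A,\mathfrak{g}_A]\subseteq kX+kY$, which is abelian.

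For $B(\lambda)$: the relation $[Z,Y]=\tfrac12 Y^2$ is quadratic, not a Lie bracket relation, so a change of generator is required. I would set
\[
Z':=Z-\tfrac{1}{2}XY,
\]
and verify by direct computation (using $[X,Y]=Y$) that
\[
[Z',Y]=[Z,Y]-\tfrac12[XY,Y]=\tfrac12 Y^2-\tfrac12 Y\cdot Y=0,
\]
and
\[
[Z',X]=[Z,X]-\tfrac12[XY,X]=(-Z+\lambda Y)+\tfrac12 XY=-Z'+\lambda Y.
\]
Together with $[X,Y]=Y$, these are Lie bracket relations on $\mathfrak{g}_B:=kX\oplus kY\oplus kZ'$. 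The same argument as before produces a surjective algebra map $U(\mathfrak{g}_B)\to B(\lambda)$, and Lemma \ref{regular} makes it an isomorphism. Solvability of $\mathfrak{g}_B$ follows from $[\mathfrak{g}_B,\mathfrak{g}_B]\subseteq kY+kZ'$ together with $[Y,Z']=0$.

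The only nontrivial step is finding the correct change of variable in the $B(\lambda)$ case; once the substitution $Z'=Z-\tfrac12 XY$ is in hand, the rest is a direct calculation, and no PBW-type combinatorics are needed because Lemma \ref{regular} lets us conclude from surjectivity and a GK-dimension comparison.
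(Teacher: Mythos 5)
Your proof is correct and follows essentially the same route as the paper: the same substitution $Z'=Z-\tfrac{1}{2}XY$ reduces $B(\lambda)$ to a Lie-type presentation, and the resulting Lie algebras are visibly solvable. The only difference is that you justify injectivity of the surjection $U(\mathfrak{g})\twoheadrightarrow A(\lambda_1,\lambda_2,\alpha)$ (resp.\ $B(\lambda)$) via Lemma \ref{regular} and the GK-dimension count, whereas the paper simply reads the new relations as a presentation; your extra step is harmless and, if anything, slightly more careful.
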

\begin{proof} For $A(\lambda_1, \lambda_2, \alpha)$, by the defining relations in Example \ref{typeA}, we have $A(\lambda_1, \lambda_2, \alpha)\cong U(\frak{g})$ as algebras where $\frak{g}$ is the  solvable Lie algebra spanned by $X, Y$ and $Z$. For $B(\lambda)$, let $Z':= Z-\frac{1}{2}XY$, then $B(\lambda)$ is generated by $X, Y$ and $Z'$ with the following relations
\begin{align*}
[X, Y]=Y,\\
[Z', X]=-Z'+\lambda Y,\\
[Z', Y]=0.
\end{align*}
Now it is clear that $B(\lambda)\cong U(\frak{g})$ where $\frak{g}$ is the solvable Lie algebra spanned by $X, Y$ and $Z'$.
\end{proof}

Since the Lie algebra $U(sl_2)$ is not solvable, we have the following corollary, which suggests  that $U(sl_2)$ has no non-trivial coalgebra deformations.
\begin{corollary}
 For any choice of $(\lambda_1, \lambda_2, \alpha)$ (resp. $\lambda$), the Hopf algebra $A(\lambda_1, \lambda_2, \alpha)$ (resp. $B(\lambda)$) is not isomorphic to $U(sl_2)$ as an algebra.
\end{corollary}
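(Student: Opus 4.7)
By the previous proposition, each of the algebras $A(\lambda_1,\lambda_2,\alpha)$ and $B(\lambda)$ is isomorphic \emph{as an algebra} to $U(\mathfrak{g})$ for a suitable solvable three-dimensional Lie algebra $\mathfrak{g}$. Hence it suffices to prove the following: for any finite-dimensional solvable Lie algebra $\mathfrak{g}$ over $k$, the algebra $U(\mathfrak{g})$ is not isomorphic to $U(sl_2)$. The plan is to exhibit an algebra invariant that separates the two cases, and the natural choice is the existence of surjections onto finite-dimensional matrix algebras.

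On the one hand, $U(sl_2)$ admits a surjective algebra map onto $M_2(k)$, namely the one induced by the natural representation $sl_2 \hookrightarrow \mathfrak{gl}_2(k) = M_2(k)$; surjectivity follows because the image Lie subalgebra $sl_2 \subset M_2(k)$ generates $M_2(k)$ as an associative algebra (for instance, $e$, $f$, $h$ together with $1$ span $M_2(k)$). On the other hand, I claim no $U(\mathfrak{g})$ with $\mathfrak{g}$ solvable admits a surjection onto $M_2(k)$. Suppose $\varphi\colon U(\mathfrak{g})\to M_2(k)$ were such a surjection. Restricting to $\mathfrak{g}$ we obtain a Lie algebra representation of $\mathfrak{g}$ on $V=k^2$; if $W\subsetneq V$ were a proper $\mathfrak{g}$-submodule, then $W$ would be preserved by the associative algebra generated by $\varphi(\mathfrak{g})$, which equals $M_2(k)$, contradicting the simplicity of $V$ as an $M_2(k)$-module. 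Thus $V$ is an irreducible two-dimensional $\mathfrak{g}$-module. But by Lie's theorem, since $\mathfrak{g}$ is solvable and $k$ is algebraically closed of characteristic zero, every finite-dimensional irreducible $\mathfrak{g}$-module is one-dimensional, a contradiction.

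Combining the two observations, no $U(\mathfrak{g})$ with $\mathfrak{g}$ solvable can be isomorphic to $U(sl_2)$ as an algebra. Applying this to the solvable Lie algebras furnished by the previous proposition finishes the proof.

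There is essentially no obstacle here beyond correctly invoking Lie's theorem: once one observes that surjectivity of $\varphi$ forces $k^2$ to be an irreducible $\mathfrak{g}$-module, the standard hypotheses on $k$ (algebraically closed of characteristic zero, which are in force throughout this section) make Lie's theorem available immediately.
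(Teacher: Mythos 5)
Your argument is correct, and it follows the same overall strategy as the paper: reduce to the preceding proposition, which realizes each $A(\lambda_1,\lambda_2,\alpha)$ and $B(\lambda)$ as $U(\mathfrak{g})$ for a solvable $\mathfrak{g}$, and then play solvability against the non-solvability of $sl_2$. The difference is that the paper stops there, offering only the one-line remark that $sl_2$ is not solvable; it leaves unaddressed the genuinely nontrivial point that solvability of $\mathfrak{g}$ can be read off from the \emph{associative} algebra $U(\mathfrak{g})$ (the isomorphism problem for enveloping algebras is delicate in general, so this step does need an argument). You supply exactly the missing invariant: $U(sl_2)$ surjects onto $M_2(k)$ via the standard representation (and $e,f,h,1$ do span $M_2(k)$), whereas a surjection $U(\mathfrak{g})\to M_2(k)$ for solvable $\mathfrak{g}$ would make $k^2$ an irreducible two-dimensional $\mathfrak{g}$-module, contradicting Lie's theorem — which applies because the standing hypothesis in this section is that $k$ is algebraically closed of characteristic zero. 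So your proof is not just correct but strictly more complete than what the paper records; nothing needs to be changed.
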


\section{Proof of Theorem \ref{3generators}}
This section is devoted to the proof of Theorem \ref{3generators}. The proof uses the cohomology of coalgebras, which we will recall briefly.

Let $C$ be a coaugmented coalgebra in the sense that there is a coalgebra map from the trivial coalgebra $k$ to $C$. Let $J=C^+$, the kernel of the counit, and one defines the reduced comultiplication on $J$ by
\[\overline{\Delta}(c)=\Delta(c)-(1\otimes c+c\otimes 1).\]
Then the \textbf{cobar construction} $\Omega C$ on $C$ is the differential graded algebra defined as follows:
\begin{itemize}
\item As a graded algebra, $\Omega C$ is the tensor algebra $T(J)$,
\item The differential in $\Omega C$ is given by
\begin{equation*}
\partial_C^n=\sum_{i=0}^{n-1} (-1)^{i+1} 
Id^{\otimes i}\otimes \overline{\Delta} \otimes Id^{\otimes (n-i-1)}.
\end{equation*}
\end{itemize}

Dually, given an augmented algebra $A$, one can construct a differential graded coalgebra $BA$, which is called the \textbf{bar construction} of $A$. See \cite[\S 19]{FHT} for basic properties of cobar and bar constructions.

\begin{lemma}\label{cohomology}
Let $C=U(\mathfrak{h})$ where $\mathfrak{h}$ is a two-dimensional Lie algebra spanned by $x$ and $y$. Then $\dim_k \h^2(\Omega C)=1$ and in fact $\h^2(\Omega C)=(x\otimes y)$ where $(x\otimes y)$ is the cohomology class defined by the cocycle $x\otimes y$.
\end{lemma}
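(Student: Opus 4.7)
The plan is to first reduce to the case where $\mathfrak{h}$ is abelian, and then exploit the internal grading of $C$ together with Koszul duality for the pair $(\mathrm{Sym},\Lambda)$. Since the cobar construction depends only on the coaugmented coalgebra structure of $C$, and since over a characteristic-zero base field the symmetrization map $\mathrm{Sym}(\mathfrak{h}) \to U(\mathfrak{h})$ is a coalgebra isomorphism (the coalgebra form of PBW), I would replace $C$ by $k[x,y]$ with both $x$ and $y$ primitive; the Lie bracket plays no role in the computation.

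Next, I would verify directly that $x \otimes y$ represents a non-trivial cohomology class. It is a cocycle because the primitivity of $x, y$ gives
\[\partial_C^2(x \otimes y) = -\overline{\Delta}(x) \otimes y + x \otimes \overline{\Delta}(y) = 0.\]
It is not a coboundary: if $x \otimes y = \partial_C^1(c) = -\overline{\Delta}(c)$ for some $c \in J$, then by cocommutativity of $C$ the element $\overline{\Delta}(c)$ is invariant under the flip $C \otimes C \to C \otimes C$, contradicting the asymmetry of $x \otimes y$.

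The core of the argument is showing $\dim_k \h^2(\Omega C) \leq 1$. Equip $C = k[x,y]$ with the natural grading $\deg x = \deg y = 1$; this induces an internal (weight) grading on $\Omega C = T(J)$ that is preserved by the differential, so $\h^2(\Omega C)$ splits as a direct sum over internal weights, and only weights $n \geq 2$ can contribute. For $n = 2$ the relevant piece of the complex is $C_2 \xrightarrow{-\overline{\Delta}} C_1 \otimes C_1$, and $\overline{\Delta}$ sends the basis $\{x^2, xy, y^2\}$ of $C_2$ to the linearly independent elements $2(x\otimes x)$, $x\otimes y + y\otimes x$, $2(y\otimes y)$, so this piece contributes exactly $\dim(C_1 \otimes C_1) - \dim C_2 = 4-3 = 1$.

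The expected obstacle is the vanishing of $\h^2$ in internal weights $n \geq 3$. The cleanest route is to invoke Koszul duality for the cocommutative coalgebra $\mathrm{Sym}(V)$: the cobar complex of $\mathrm{Sym}(V)$ is quasi-isomorphic to the exterior algebra $\Lambda(V)$ placed diagonally (cohomological degree equal to internal weight), so $\h^p(\Omega \mathrm{Sym}(V))$ is concentrated in weight $p$ and equals $\Lambda^p(V)$ there. For $V = \mathfrak{h}$ this yields $\h^2(\Omega C) \cong \Lambda^2(\mathfrak{h}) \cong k$. A self-contained alternative is the dual identification $\h^*(\Omega C) \cong \Ext^*_{C^*}(k,k)$, where $C^* \cong k[[u,v]]$, computed by the Koszul resolution of this regular local ring of dimension two to give $\Lambda(u^*, v^*)$ with dimensions $1,2,1,0,\ldots$. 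Combined with the non-vanishing class from the previous step, this forces $\h^2(\Omega C) = k\cdot(x \otimes y)$.
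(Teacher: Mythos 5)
Your proposal is correct, and its engine is the same as the paper's: the paper also obtains $\dim_k \h^2(\Omega C)=1$ by passing to the graded dual polynomial algebra $A=k[x_1,x_2]$ and identifying $\h^\bullet(\Omega C)$ with $\Ext^\bullet_A(k,k)$, which is your Koszul-duality computation in different clothing. The genuine differences are in the surrounding steps. First, you make the reduction to the polynomial coalgebra explicit via the symmetrization isomorphism, whereas the paper instead observes directly that the comultiplication in the PBW basis $\{x^iy^j\}$ coincides with that of the polynomial coalgebra; both are fine. Second, and more substantively, your proof that $x\otimes y$ is not a coboundary differs from the paper's and is cleaner: you note that every element of $\im\partial_C^1=-\overline{\Delta}(J)$ is invariant under the flip because $U(\mathfrak{h})$ is cocommutative, while $x\otimes y$ is not symmetric; the paper instead uses the internal grading to force a potential preimage $w$ into $C(2)$ and then checks by hand that $\partial^1(ax^2+bxy+cy^2)$ lies in the span of $x\otimes x$, $y\otimes y$ and $x\otimes y+y\otimes x$. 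Your cocommutativity argument has the added merit of working directly in $U(\mathfrak{h})$, with no reduction and no grading needed for that step. The explicit weight-two count ($4-3=1$) is a nice consistency check but is subsumed by the duality statement, which already concentrates $\h^2$ in weight two.
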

\begin{proof}
By the PBW Theorem, the coalgebra $C$ has a basis of the form $\{x^iy^j \,|\, i, j\in \mathbb{N}\}$. It is also well known that $C$ becomes a graded coalgebra by setting $\deg x^iy^j =i+j$. Denote the $n$-th homogeneous part by $C(n)$. Now $J$ can be naturally identified with $\bigoplus_{i=1}^{\infty}C(i)$. Moreover, the graded $k$-linear dual of the graded coalgebra $C$ is isomorphic to $A=k[x_1, x_2]$ as graded algebras, where $x_i$ has degree $1$. By \cite[Lemma 8.6 (c)]{LPWZ2}, $B^\#A\cong \Omega C$ as DG algebras, where $B^\#A$ is the graded dual of the bar construction of $A$. On the other hand, by \cite[Lemma 4.2]{LPWZ}, $\h^\bullet(B^\#A)\cong \Ext^\bullet_A(k_A, k_A)$. As a consequence, $\dim_k \h^2(\Omega C)=\dim_k\Ext^2_A(k_A, k_A)=1$.

It is easy to check by definition that $x\otimes y$ is a cocycle, i.e. $\partial ^2(x\otimes y)=0$. We only have to show that $x\otimes y\notin \im\partial^1$. Suppose to the contrary that there is some $w\in C$ such that $\partial^1(w)=x\otimes y\in C(1)\otimes C(1)$. Then by a degree argument, the element $w$ is in $C(2)$, i.e. $w$ must be of the form $ax^2+ bxy+ cy^2$ for some $a, b, c\in k$. However, an easy calculation shows that $\partial^1(ax^2+ bxy+ cy^2)$ is in the $k$-subspace $V$ spanned by $x\otimes x$, $y\otimes y$ and $x\otimes y+y\otimes x$ and clearly $x\otimes y$ is not in $V$. This completes the proof.
\end{proof}

Now we are ready to prove Theorem \ref{3generators}.
\begin{proof}[of Theorem \ref{3generators}]
Let $C$ be the subalgebra of $H$ generated by $x$ and $y$. Then $C$ is a Hopf subalgebra of $H$ and $C$ is isomorphic to $U(\mathfrak{h})$ where $\mathfrak{h}$ is a two-dimensional Lie algebra.
Notice that by construction $C_1=H_1$. Let $N\ge 2$ be the least number such that $C_N\subsetneq H_N$. By \cite[Lemma 5.3.2]{Mont}, there exists $z'\in H_N\setminus C_N$ such that $\Delta(z')=1\otimes z'+z'\otimes 1+u$, where $u\in H_{N-1}\otimes H_{N-1}=C_{N-1}\otimes C_{N-1}\subset C\otimes C$. Without loss of generality, we assume that $\e(z')=0$.

Now we have two DG algebras, $(\Omega H, \partial_H)$ and $(\Omega C, \partial_C)$. In fact, $(\Omega C, \partial_C)$ can be viewed as a sub-complex of $(\Omega H, \partial_H)$. Notice that $0=\partial_H^2\partial_H^1(z')=\partial_H^2(u)=\partial_C^2(u)$, i.e. $u$ is a cocycle. We claim that $u$ represents a non-zero cohomology class in $\h^2(\Omega C)$. If not, there is $w\in C$ such that $\partial_C^1(w)=1\otimes w-\Delta(w)+w\otimes 1=u$. As a consequence, $\Delta(z'+w)=1\otimes (z'+w)+(z'+w)\otimes 1$, i.e. $z'+w$ is a primitive element in $H$. By the fact that $H_1=C_1$, $z'+w\in C_1$. But this would imply that $z'\in C$, which contradicts the choice of $z'$.

By Lemma \ref{cohomology}, the cohomology classes in $\h^2(\Omega C)$ represented by $u$ and $x\otimes y$ only differ by a non-zero scalar. Hence there exists $v\in C^+$ and $a\in k^\times$ such that $\partial^1(v)=au-x\otimes y$. Let $z=az'+v$. Then $z\notin C$ and $\Delta(z)=1\otimes z+x\otimes y+z\otimes 1$. 

Next, assume that $\GK H=3$. Now we have to show that $H$ is generated by $x$, $y$ and $z$. Let $K$ be the subalgebra of $H$ generated by $x$, $y$ and $z$. Then it is easy to check that $K$ is a sub-bialgebra and thus a Hopf subalgebra of $H$ by Lemma \ref{sub}. By the construction of $K$, $C\subsetneq K$. By Lemma \ref{GKplus1}, $\GK \gr K\ge \GK \gr C+1= 3$. On the other hand, $\GK\gr K=\GK K\le \GK H=3$ since $K\subset H$. Hence $\GK K=3$. Now it follows from Lemma \ref{GKcrit} that $K=H$. This completes the proof.
\end{proof}

\end{document}